\newcommand{\mc}{\mathscr}
\newcommand{\mca}{\mathcal}
\newcommand{\cu}{\subseteq}
\newcommand{\f}{\mathbb}
\newcommand{\ol}{\overline}
\newcommand{\wt}{\widetilde}
\newcommand{\bm}{\mathbf}
\newcommand{\R}{\mathbb R}
\newcommand{\SV}{{\rm SV}}
\newcommand{\MA}{{\rm MA}}
\newcommand{\PSV}{{\rm PSV}}
\newtheorem{lemma}{Lemma}
\newtheorem{proposition}{Proposition}
\newtheorem{theorem}{Theorem}
\newtheorem{corollary}{Corollary}
\newtheorem*{definition*}{Definition}
\newtheorem{remark}{Remark}
\DeclareMathOperator{\argmin}{argmin} 
\DeclareMathOperator{\rank}{rank}
\DeclareMathOperator{\tr}{tr}
\definecolor{brightpink}{rgb}{1.0, 0.0, 0.5}
\title{Computing the least cone-constrained singular value  
of matrices} 
\date{}
\author{Giovanni Barbarino\thanks{University of Mons, Rue de Houdain 9, 7000 Mons, Belgium.  GB is member of the Research Group GNCS (Gruppo Nazionale per il
Calcolo Scientifico) of INdAM (Istituto Nazionale di Alta Matematica). 
Email: giovanni.barbarino@gmail.com. } \and Nicolas Gillis\thanks{University of Mons, Rue de Houdain 9, 7000 Mons, Belgium.  
Email: nicolas.gillis@umons.ac.be. } 
	\and David Sossa\thanks{Universidad de O'Higgins, Instituto de Ciencias de la Ingenier\'ia, Av.\,Libertador Bernardo O'Higgins 611, Rancagua, Chile. E-mail: david.sossa@uoh.cl.}} 
\begin{document}

\maketitle

\begin{abstract}

This paper deals with the numerical computation of the least singular value of a rectangular matrix $A$ relative to a pair of closed convex cones $(P,Q)$, which is defined as the optimal value of the non-convex optimization problem of minimizing $\langle u,Av\rangle$ such that $u$ and $v$ are unit vectors in $P$ and $Q$, respectively. When $A$ is the identity matrix, the least singular value coincides with the cosine of the largest angle between $P$ and $Q$. When $P$ and $Q$ are positive orthants, the least singular value is called the least Pareto singular value of $A$ and has applications, for instance, in graph theory. We prove the NP-hardness of all the above problems, while identifying cases when such problems can be solved in polynomial time. 
We then propose four algorithms. Two are exact algorithms,  meaning that they are guaranteed to compute a globally optimal solution; one uses an exact non-convex quadratic programming solver, and the other a brute-force active-set method. 
The other two are heuristics, meaning that they rapidly compute locally optimal solutions; one uses an alternating projection algorithm with extrapolation, and the other a sequential partial linearization approach based on fractional programming. 
We illustrate the use of these algorithms on several examples. 
\end{abstract}

\noindent\textbf{Keywords.} Cone-constrained singular values, maximal angles between cones, Pareto singular values, non-convex problem, complexity.

\noindent\textbf{AMS subject classification.} 15A18, 90C26, 
68Q15, 65K05 

\bigskip
\noindent Communicated by Luis Zuluaga.

\section{Introduction}
 Singular values of matrices are ubiquitous in applied linear algebra, and they are at the core of essential tools in data analysis such as least-square techniques, principal component analysis, and principal angles of subspaces. 
 For $A\in \R^{m\times n}$, its singular values are obtained by computing the critical (stationary) pairs of the problem of minimizing $\langle u,Av\rangle$ subject to $u\in S_m$ and $v\in S_n$, where $S_d$ denotes the unit sphere in $\R^d$. Recently, in the series of papers \cite{seeger2023SV,seeger2023PSV,seeger2023SVLM}, Seeger and Sossa have considered the study of such optimization problems  but with the extra condition that $u$ and $v$ range on the closed convex cones $P\subseteq\mathbb R^m$ and $Q\subseteq\mathbb R^n$, respectively. That is, they investigated the critical values of the \emph{least cone-constrained singular value problem}: 
\begin{equation}\label{eq:SVcones}
\SV(A,P,Q):\;\;\min_{u,v}\langle u,Av\rangle\quad\text{s.t.}\quad u\in P\cap S_m,\,v\in Q\cap S_n.
\end{equation} 
The critical values of $\SV(A,P,Q)$ are obtained from the KKT optimality conditions to \eqref{eq:SVcones}. More precisely,  a real number $\sigma$ is a \emph{critical value} of $\SV(A,P,Q)$ if there exists $(u,v)\in S_m\times S_n$ such that
\[P\ni u\perp (Av-\sigma u)\in P^\ast\quad\text{and}\quad Q\ni v\perp (A^\top u-\sigma v)\in Q^\ast,\]
where $\perp$ denotes orthogonality, and $P^\ast$ denotes the dual cone of $P$.
The critical values of $\SV(A,P,Q)$ give us the \emph{singular values of $A$ relative to $(P,Q)$} which we will refer to as the  $(P,Q)$-singular values of $A$, and the corresponding pairs
$(u,v)$ will be referred to as
\emph{critical pairs} of $\SV(A,P,Q)$. 
We say that $(u,v)$ is a \emph{solution pair} of \eqref{eq:SVcones} if it solves \eqref{eq:SVcones}. Observe that when $P=\mathbb R^m$ and $Q=\mathbb R^n$, the critical values of \eqref{eq:SVcones} correspond to
the (classical) singular values of $A$.

Seeger and Sossa \cite{seeger2023SV,seeger2023PSV,seeger2023SVLM} focused on the study of $\SV(A,P,Q)$ mainly from a theoretical point of view. For instance, they studied stability and cardinality issues of the $(P,Q)$-singular value spectrum of $A$. They proved that the cardinality of this spectrum is finite whenever $P$ and $Q$ are polyhedral cones. 
They also showed that $\SV(A,P,Q)$ covers many interesting optimization problems, including maximal angle between two cones \cite{iusem-seeger2005maximal,seeger-sossa2016angle1,seeger-sossa2016angle2,orlitzky2020maximal,de2023computing}, cone-constrained principal component analysis \cite{montanari-richard2016pca,deshpande-montanari2014conepca}, and nonnegative rank-one matrix factorization  \cite{gillis2014biclique}.

When $A$ is the identity matrix, $A = I \in \R^{n\times n}$,  
 $\SV(I,P,Q)$ becomes the problem of computing the \emph{maximal angle between two cones}: 
\begin{equation}\label{eq:angles_cones}
\MA(P,Q):\;\;\min_{u,v}\langle u,v\rangle\quad\text{s.t.}\quad u\in P\cap S_n,\,v\in Q\cap S_n.
\end{equation}
The optimal value of $\MA(P,Q)$ is the cosine of the maximal angle between $P$ and $Q$. The arccosine of the critical values of $\MA(P,Q)$ are called the \emph{critical angles} between $P$ and $Q$. The theory of critical angles is discussed in \cite{iusem-seeger2005maximal, seeger-sossa2016angle1, seeger-sossa2016angle2}, some numerical methods are presented in \cite{orlitzky2020maximal,de2023computing}, and an application to an image set classification problem is given in \cite{sogi2022imagesetangle}.

Another interesting case is when $P$ and $Q$ are the nonnegative orthants; that is, $P=\mathbb R^m_+$ and $Q=\mathbb R^n_+$. In this case, the $(\mathbb R^m_+,\mathbb R^n_+)$-singular values of $A$ are called the \emph{Pareto singular values} of $A$, and $\SV(A,\R^m_+,\R^n_+)$ becomes the \emph{least Pareto singular value problem}:
\begin{equation}\label{eq:pareto}
\PSV(A):\;\;\min_{u,v}\langle u,Av\rangle\quad\text{s.t.}\quad u\in \R^m_+\cap S_m,\,v\in \R^n_+\cap S_n.
\end{equation}
Pareto singular values have applications in spectral graph theory. For instance, in \cite{seeger-sossa2024-boolean}, Pareto singular values of Boolean matrices were studied for analyzing structural properties of bipartite graphs.

To the best of our knowledge, there are only a few works about the numerical resolution of $\SV(A,P,Q)$. 
For example, for $\PSV(A)$, it is possible to find all the Pareto singular values of $A$ by 
brute force computation. Unfortunately, this is only possible when $m$ and $n$ are small numbers, 
see~\cite{seeger2023PSV}. Recently, an efficient numerical method was proposed for the maximal angle problem $\MA(P,Q)$~\cite{de2023computing} 
by reformulating $\MA(P,Q)$ as a fractional program.

\paragraph{Contribution and outline of the paper} 

In this work, we study the numerical computation of the cone-constrained singular value problem. 
We first prove, in Section~\ref{sec:nphard}, 
 that $\SV(A,P,Q)$, $\MA(P,Q)$ and $\PSV(A)$ are NP-hard problems. In Section~\ref{sec:simplecases}, we identify several cases when these problems can be solved in polynomial time, under appropriate conditions on the input, $A$, $P$ and $Q$. 
 In Section~\ref{sec:algorithms}, we first explain how 
to check whether a given problem can be solved in polynomial time using the results from Section~\ref{sec:simplecases}, then we propose four algorithms. Two are exact algorithms presented in Section~\ref{sec:exactalgo},  meaning that they are guaranteed to compute a globally optimal solution; one uses a brute-force active-set method (Section~\ref{sec:bruteforce}), and the other an exact non-convex quadratic programming solver (Section~\ref{sec:gurobi}). 
The other two are heuristics presented in Section~\ref{sec:heuristics}, meaning that they rapidly compute locally optimal solutions; one uses an alternating projection algorithm with extrapolation (Section~\ref{sec:AO}), and the other a sequential partial linearization approach based on fractional programming (Section~\ref{sec:fracprog}). 
We illustrate the use of these algorithms on several examples in Section~\ref{sec:numexp}.

\paragraph{Notation} 

In $\R^d$, we denote $\langle\cdot,\cdot\rangle$ the standard inner product and $\Vert\cdot\Vert$ its induced norm (Euclidean norm). Let $S_d:=\{x\in\R^d:\Vert x\Vert=1\}$, and denote $\R^{m\times n}$ the space of $m\times n$ real matrices. For a matrix $M$, $M \geq 0$ means that $M$ is  component-wise nonnegative.  

Given a cone $P\cu \f R^m$ and a matrix $M\in \f R^{\ell\times m}$, then $MP$ is the cone in $\f R^\ell$ defined as $MP:= \{ Mx\ |\ x\in P \}$. 
Unless specified otherwise, in this paper, we work with polyhedral cones, $P$ and $Q$, which are convex, closed and finitely generated. Hence, without loss of generality (w.l.o.g.), we assume that
\begin{equation}\label{polyhedral}
P=G \mathbb R^p_+ = \{ Gx \ | \ x \geq 0 \} 
\quad \mbox{and} \quad 
Q=H \mathbb R^q_+  = \{ Hy \ | \ y \geq 0 \}, 
\end{equation}
where $G=[g_1,\ldots,g_p]$ and $H=[h_1,\ldots,h_q]$ are matrices whose columns are {conically independent vectors}, that is, a minimal set of conic generators. 
W.l.o.g., we assume the columns of $G$ and $H$ are unit vectors, that is, $\|g_i\| = 1$ for all $i$ and $\|h_j\| = 1$ for all $j$.  
The \textit{faces} of $P$ are the polyhedral cones $F\cu P$ such that  
\[
v_1 +  v_2\in F, \quad v_1,v_2\in P  \implies v_1,v_2\in F.
\]
In particular, any face is generated by some subset of the columns of $G$. 
The dimension of a face is equal to the dimension of the subspace generated by its generators.

The dual of a cone $P$ is defined as $P^* = \{ x \ | \langle x,y\rangle \geq 0,\,\forall y \in P\}$. 
A \textit{ray} of $P$ is any nonzero vector of a $1$-dimensional face of $P$,  or equivalently any positive multiple of a column of $G$.
A face is called \textit{proper} if it is not equal to $\{0\}$ or to the whole cone $P$. 
The \textit{facets} of the cone $P$ are the proper faces of maximal dimension. 
Given a vector $v$, we denote as $\f R_+ v$ the cone of the nonnegative multiples of $v$, that is,  $\f R_+v:= \{\alpha v \ | \ \alpha \ge 0 \}$.

For a positive integer number $n$, we denote $[n] = \{1,2,\dots,n\}$. Moreover, given a subset $\mathcal I\cu [n]$ and a vector $x\in \f R^n$, we denote as $x_{\mathcal I}   \in \f R^{|\mathcal I|}$ the restricted vector to the indices contained in $\mca I$, that is, if $\mca I = \{ i_1,i_2,\dots, i_{|\mca I|}\}$ and the $i$-th entry of $x$ is $x_i$, then $x_{\mca I} = [x_{i_1}, x_{i_2},\dots,x_{ i_{|\mca I|}}]^\top$. Analogously, if $M \in \f R^{m\times n}$ is a matrix with $n$ columns, then $M_{:,\mca I}\in \f R^{m\times |\mca I|}$ is a shortcut for the matrix restricted to the columns with indices in $\mca I$. We denote by $\mca I^c$ the set of indices of $[n]$ that are not in $\mca I$.

We denote as $\rho(C)$ the spectral radius of a square matrix $C$. In $\R^{m\times n}$, 
the norm $\Vert\cdot\Vert_F$ denotes the Frobenius norm, that is, $\Vert A\Vert_F=\sqrt{{\rm trace}(A^\top A)}$, and $\| \cdot \|$ denotes the spectral norm, that is, $\| A \|=\max_{\Vert z\Vert=1}\Vert Az\Vert$ (largest singular value of $A$). 
The matrix $I_n$ is the identity of dimension $n$, and we will simply use  $I$ when the dimension is clear from the context.
Finally, ${\bf 1}$ is the all-ones vector and $e_i$ is the $i$-th unit vector, both of the appropriate dimension that will be clear from the context.

\section{Computational complexity} \label{sec:nphard}

In this section, we discuss the computational complexity of the various problems introduced in the previous section, namely $\SV(A,P,Q)$, $\MA(P,Q)$ and $\PSV(A)$ when $P,Q$ are generated by matrices $G,H$. 
To the best of our knowledge, this question has not been addressed previously in the literature. 
We prove NP-hardness of $\PSV(A)$ (Theorem~\ref{th:SVcones_NPhard}), which implies NP-hardness of $\SV(A,P,Q)$ since $\PSV(A) = \SV(A,\R^m_+,\R^n_+)$. 
Interestingly, we prove that any problem $\SV(A,P,Q)$ can be reduced in polynomial time to a problem $\MA(\wt P,\wt Q)$ (Theorem~\ref{th:reduction_SVcones_anglescones} and Lemma~\ref{lem:orthogonal_rational_decomposition}), which in turn implies the NP-hardness of $\MA(P,Q)$ (Corollary~\ref{cor:angles_cones_NPhard}).

\begin{remark}[Hardness results]
    In this paper, we focus on the NP-hardness of the studied optimization problems, namely SV$(A,P,Q)$, MA$(P,Q)$ and PSV$(A)$. 
    For simplicity of the presentation, we do not consider approximate versions nor decision versions of these problems, nor do we check whether the optimal solutions of these problems are polynomial in the size of the input. This is a topic for further research. 
\end{remark}

\subsection{Least Pareto singular value is NP-hard}

In general, the problem $\SV(A,P,Q)$ is difficult to solve since it is nonconvex. Indeed, we prove in this section that  $\PSV(A) = \SV(A,\R^m_+,\R^n_+)$ is NP-hard. To do so, we rely on the following result. 

\begin{theorem}\cite[Corollary 1]{gillis2014biclique} \label{th:rank1NF}
    Given $M \in \R^{m\times n}$, it is NP-hard to solve  
\begin{equation} \label{eq:rank1NF}
\min_{u \in\mathbb R^m, v\in \R^n} \| M - uv^\top \|_F^2 
\quad  
\text{ such that } \quad 
u\geq0,\;v\geq 0. 
\end{equation}
\end{theorem}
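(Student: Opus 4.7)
My plan is to prove Theorem~\ref{th:rank1NF} by reduction from the \emph{maximum edge biclique} problem, a classical NP-hard problem: given a bipartite graph $\Gamma=(V_1\cup V_2,E)$, find $U\subseteq V_1$ and $W\subseteq V_2$ with $U\times W\subseteq E$ maximizing $|U|\cdot|W|$, whose optimal value I denote $\omega(\Gamma)$. The reduction takes $\Gamma$ to its biadjacency matrix $M\in\{0,1\}^{|V_1|\times|V_2|}$, and the goal is to establish the identity
\[
\min_{u,v\ge 0}\|M-uv^\top\|_F^2 \;=\; \|M\|_F^2 - \omega(\Gamma).
\]
Since this transformation is clearly polynomial in the size of $\Gamma$, any polynomial-time algorithm for \eqref{eq:rank1NF} would solve max edge biclique in polynomial time, establishing NP-hardness.

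The technical core of the argument is what I would call a \emph{support lemma}: at any optimum $(u^*,v^*)$ for \eqref{eq:rank1NF} with the $0/1$ matrix $M$ above, the supports $U:=\mathrm{supp}(u^*)$ and $W:=\mathrm{supp}(v^*)$ must satisfy $M_{U,W}=\mathbf{1}\mathbf{1}^\top$, i.e.\ $U\times W$ is an edge biclique of $\Gamma$. The strategy is a local perturbation argument: if some $(i,j)\in U\times W$ has $M_{ij}=0$, then $(u_i^* v_j^*)^2$ is a pure penalty in the Frobenius objective, and one expects that deleting either $i$ from $U$ or $j$ from $W$ and rescaling the remaining entries strictly decreases the objective. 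The main obstacle is that removing one index shifts the optimal magnitudes of the remaining entries of $u^*$ and $v^*$, so the perturbation must be handled carefully, for instance by first solving in closed form the scalar sub-problem $\min_{\alpha,\beta\ge 0}\|M_{U,W}-\alpha\beta\,\mathbf{1}\mathbf{1}^\top\|_F^2$ and then tracking how its optimum changes when a support index is removed.

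Once the support lemma is in place, the remainder is a short computation. If $M_{U,W}$ is the all-ones block, substituting $u=\alpha\mathbf{1}_U$ and $v=\beta\mathbf{1}_W$ into the Frobenius objective and optimizing over $\alpha,\beta\ge 0$ gives $\alpha\beta=1$ and objective value $\|M\|_F^2-|U|\cdot|W|$. Combined with the support lemma, minimizing the rank-one NMF objective is therefore equivalent to maximizing $|U|\cdot|W|$ over edge bicliques of $\Gamma$, which yields the claimed identity and completes the reduction. A minor remaining check is that an exact optimum of \eqref{eq:rank1NF} exists and is attained at a point with integer-supported indicator vectors, so that the reduction is indeed a Karp-style one and preserves polynomial-time equivalence.
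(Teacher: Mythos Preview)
Your reduction has a fatal flaw: you take $M$ to be the $0/1$ biadjacency matrix $B$ itself, but for \emph{any} entrywise nonnegative $M$ the problem~\eqref{eq:rank1NF} is solvable in polynomial time. Indeed, by Perron--Frobenius the leading left and right singular vectors of a nonnegative matrix can be taken nonnegative, so the unconstrained best rank-one approximation already satisfies $u\ge 0$, $v\ge 0$, and the optimum is simply $\|M\|_F^2-\sigma_1(M)^2$. The paper says this explicitly just after the statement of Theorem~\ref{th:rank1NF}. Consequently your claimed identity $\min_{u,v\ge 0}\|M-uv^\top\|_F^2=\|M\|_F^2-\omega(\Gamma)$ is false in general: for $M=\begin{pmatrix}1&1\\1&0\end{pmatrix}$ one has $\omega(\Gamma)=2$ but $\sigma_1(M)^2=(3+\sqrt{5})/2>2$, and the optimal $u^*=v^*$ is the Perron vector of $M$, which has full support even though $M_{2,2}=0$. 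This is exactly a counterexample to your ``support lemma'': having $u_i^*>0$ is not a pure penalty at a zero entry $(i,j)$, because the same $u_i^*$ simultaneously helps approximate the ones in row $i$.

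The actual reduction in \cite{gillis2014biclique}, which the paper sketches right after the theorem, fixes this by constructing $M=B-d(ee^\top-B)\in\{1,-d\}^{m\times n}$ with $d\ge\max(m,n)$: the zeros of $B$ are replaced by large \emph{negative} entries, so that any positive mass $u_i v_j$ on a non-edge now incurs a cost of order $d^2$, which is what forces the support of any local minimizer to lie on a biclique. Your perturbation intuition is correct in spirit, but it only goes through once the penalty for covering a non-edge dominates the benefit of covering the adjacent edges, and that requires the $-d$ construction, not the raw $0/1$ matrix.
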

The proof of Theorem~\ref{th:rank1NF} relies on the maximum edge biclique problem (MEBP) in a bipartite graph: Given a binary biadjacency matrix $B \in \{0,1\}^{m \times n}$ where $B(i,j) = 1$ if node $i$ and $j$ are connected (one on each side of the bipartite graph), the goal is to find binary vectors $u$ and $v$ such that $u^\top B v$ is maximized while $u_i v_j \leq B(i,j)$ for all $i,j$. The vectors $(u,v)$ indicate the position of the largest bipartite subgraph where all nodes are connected (which is called a biclique).
The proof of Theorem~\ref{th:rank1NF} shows that the MEBP can be reduced to  \eqref{eq:rank1NF} with $M(i,j) = B(i,j) - (1-B(i,j))d$ for all $(i,j)$ and any $d \geq \max(m,n)$. The matrix $M$ replaces the zero entries of $B$ by $-d$'s in order to enforce these entries to be approximated by zeros in the optimal solutions of~\eqref{eq:rank1NF}.  

Note that, when $M \geq 0$, the problem can be solved in polynomial time by the Perron-Frobenius and Eckart-Young theorems. Hence, allowing negative entries in $M$ is crucial for the NP-hardness.

\begin{theorem}\label{th:SVcones_NPhard}
    Computing the least Pareto singular values is NP-hard, that is,  $\PSV(A)$ is NP-hard. 
\end{theorem}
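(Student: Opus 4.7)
The plan is to reduce nonnegative rank-one approximation, which is NP-hard by Theorem~\ref{th:rank1NF}, to $\PSV$ in polynomial time. Given $M\in\R^{m\times n}$, the key observation is that the objective of \eqref{eq:rank1NF} separates into a ``direction'' part and a ``scaling'' part. Writing $u = \alpha\hat u$ and $v = \beta\hat v$ with unit nonnegative $\hat u,\hat v$ and $\alpha,\beta\ge 0$,
\begin{equation*}
\|M - uv^\top\|_F^2 = \|M\|_F^2 - 2\alpha\beta\,\hat u^\top M\hat v + \alpha^2\beta^2,
\end{equation*}
so, as a function of $t=\alpha\beta\ge 0$, the inner minimum equals $-[\max(0,\hat u^\top M\hat v)]^2$. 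Consequently,
\begin{equation*}
\min_{u,v\ge 0}\|M - uv^\top\|_F^2 \;=\; \|M\|_F^2 \;-\; \bigl[\max(0,\mu^\ast)\bigr]^2,\qquad \mu^\ast := \max_{\substack{\hat u,\hat v\ge 0\\ \|\hat u\|=\|\hat v\|=1}}\hat u^\top M\hat v.
\end{equation*}

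Next, by flipping the sign of $M$, the direction problem is precisely the Pareto singular value problem. Indeed, setting $A:=-M$ and using the definition of $\PSV$ in \eqref{eq:pareto},
\begin{equation*}
\mu^\ast \;=\; -\min_{\substack{u,v\ge 0\\ \|u\|=\|v\|=1}} u^\top A v \;=\; -\,\PSV(-M),
\end{equation*}
where $\PSV(-M)$ denotes the optimal value of \eqref{eq:pareto} with input $-M$. Thus any algorithm that computes the optimal value of $\PSV$ can be invoked (once) to evaluate $\min_{u,v\ge 0}\|M-uv^\top\|_F^2$. The reduction is manifestly polynomial time: we only negate $M$ and perform constant-size arithmetic on the returned scalar and the precomputable $\|M\|_F^2$.

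I expect no real obstacle in executing this plan; the only subtlety is the bookkeeping at the scaling step, namely the appearance of $\max(0,\mu^\ast)$ to account for the possibility that every nonnegative unit pair gives a nonpositive bilinear form (in which case the optimal rank-one factor is $uv^\top = 0$). This case is harmless: on the NP-hard instances $M = B - (ee^\top - B)d$ highlighted after Theorem~\ref{th:rank1NF} one has $\mu^\ast \ge 1 > 0$ so $\max(0,\mu^\ast) = \mu^\ast$, and in general the formula above handles both regimes uniformly. Since evaluating the left-hand side is NP-hard, so is evaluating $\PSV(-M)$, and hence $\PSV(A)$ is NP-hard.
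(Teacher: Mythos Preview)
Your argument is correct and follows essentially the same route as the paper: reduce the NP-hard nonnegative rank-one approximation problem to $\PSV(-M)$ by separating the scaling from the direction and identifying the direction subproblem with $\PSV$. The only difference is cosmetic---you carry out the scaling computation explicitly and handle the $\mu^\ast\le 0$ case with $\max(0,\mu^\ast)$, whereas the paper assumes $M$ has a positive entry (guaranteed for the biclique instances) and cites the equivalence from \cite[Theorem~3]{seeger2023PSV}.
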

\begin{proof}
Let $M$ be a $m\times n$ matrix with at least one positive entry. The result \cite[Theorem\,3]{seeger2023PSV} shows that solving \eqref{eq:rank1NF} is equivalent to solving $\PSV(-M)$, thus both problems are NP-hard. 
\end{proof}

As $\PSV(A)$ is a particular instance of $\SV(A,P,Q)$, we have the following corollary. 
\begin{corollary}\label{cor:ConeSV_is_NPHard}
Computing the least cone-constrained singular value, that is, $\SV(A,P,Q)$, is NP-hard.
\end{corollary}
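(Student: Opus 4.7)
The plan is to observe that the corollary is an immediate consequence of Theorem~\ref{th:SVcones_NPhard} via a trivial polynomial-time reduction. Specifically, I would argue that $\PSV(A)$ is the special instance of $\SV(A,P,Q)$ obtained by taking $P=\R^m_+$ and $Q=\R^n_+$, which in the polyhedral notation \eqref{polyhedral} correspond to the generator matrices $G=I_m$ and $H=I_n$. Since this instantiation requires no computation beyond writing down two identity matrices, any algorithm that solves general $\SV(A,P,Q)$ instances in time polynomial in the input size would in particular solve $\PSV(A)$ in polynomial time.

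Concretely, the reduction proceeds as follows: given an arbitrary instance $A \in \R^{m\times n}$ of $\PSV(A)$, form the triple $(A, I_m, I_n)$ as an instance of $\SV(A,P,Q)$; this transformation is clearly polynomial. The identity $\PSV(A) = \SV(A,I_m,I_n)$ then shows that the optimal value, and any optimal solution pair $(u,v)$, of the latter problem is also optimal for the former. Composing with the NP-hardness of $\PSV(A)$ established in Theorem~\ref{th:SVcones_NPhard} yields NP-hardness of $\SV(A,P,Q)$.

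There is no genuine obstacle here; the only subtlety worth noting is that NP-hardness of a general problem from NP-hardness of a subclass requires the subclass to be genuinely embedded as polynomial-time-constructible instances, which is manifestly the case. I would therefore keep the proof to a single short paragraph, essentially citing Theorem~\ref{th:SVcones_NPhard} and remarking that $\PSV$ arises as the case $(P,Q) = (\R^m_+,\R^n_+)$ of $\SV(A,P,Q)$.
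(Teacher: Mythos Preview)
Your proposal is correct and matches the paper's own reasoning exactly: the corollary is stated without a separate proof in the paper, and is justified by the single sentence that $\PSV(A)$ is a particular instance of $\SV(A,P,Q)$ (with $P=\R^m_+$, $Q=\R^n_+$, i.e., $G=I_m$, $H=I_n$). There is nothing to add or change.
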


\subsection{Maximum conic angle is NP-hard}

 The problem $\MA(P,Q)$ of finding the maximal angle between the cones, $P$ and $Q$, formulated in \eqref{eq:angles_cones},  is also a nonconvex problem. Although being a particular case of $\SV(A,P,Q)$, with $A = I_n$, it can be proved that any algorithm computing the maximal angle between cones can also solve the more general problem $\SV(A,P,Q)$ for any matrix $A\in \f R^{m\times n}$.  
\begin{lemma}
\label{lem:orthogonal_decomposition}  
Any matrix $A\in \f R^{m\times n}$ of spectral norm 1 and $m\ge n$ can be decomposed as $A=U^\top V$ where 
{$U\in \f R^{(m+s)\times m}$ and $V\in \f R^{(m+s)\times n}$} are matrices with orthonormal columns, and $s$ is the rank of $A^\top A-I$.
\end{lemma}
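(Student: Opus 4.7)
The plan is to give an explicit block-matrix construction, exploiting the hypothesis $\|A\|=1$ to identify where the ``extra $s$ dimensions'' should come from. The key algebraic observation is that if we split $U$ and $V$ into an upper block of $m$ rows and a lower block of $s$ rows, then choosing $U$ to be the canonical isometric embedding $[I_m;0]$ immediately forces the top $m$ rows of $V$ to equal $A$, and the only remaining task is to complete the bottom $s$ rows of $V$ so that $V^\top V = I_n$.

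First I would handle the dimension count. Since $\|A\|=1$, every singular value of $A$ lies in $[0,1]$, so $I_n - A^\top A$ is positive semidefinite; moreover $1$ is a singular value of $A$, so $I_n - A^\top A$ has at least one zero eigenvalue. Hence $s = \rank(A^\top A - I_n) = \rank(I_n - A^\top A) \le n-1$, as required. This is precisely the step where the \emph{equality} $\|A\|=1$ (as opposed to the weaker $\|A\|\le 1$) is used.

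Second, I would produce a factor $B \in \R^{s\times n}$ satisfying $B^\top B = I_n - A^\top A$ by truncating a spectral decomposition: write $I_n - A^\top A = W\Lambda W^\top$, retain only the $s$ positive eigenvalues and their eigenvectors in $\Lambda_+ \in \R^{s\times s}$ and $W_+ \in \R^{n\times s}$, and set $B := \Lambda_+^{1/2} W_+^\top$. Then define
\[
U := \begin{pmatrix} I_m \\ 0_{s\times m} \end{pmatrix} \in \R^{(m+s)\times m}, \qquad V := \begin{pmatrix} A \\ B \end{pmatrix} \in \R^{(m+s)\times n}.
\]
A direct verification gives $U^\top U = I_m$, $V^\top V = A^\top A + B^\top B = I_n$, and $U^\top V = A$, which establishes the decomposition.

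There is no serious obstacle in this argument; the proof is essentially a construction. The two points requiring a modicum of care are ensuring that $B$ has exactly $s$ rows (achieved by restricting the spectral decomposition to the positive-eigenvalue part) and noticing that the column-orthonormality of $V$ is feasible because $m \ge n$ implies $m+s \ge n$, so that the ambient space $\R^{m+s}$ can indeed accommodate both an orthonormal frame of $m$ vectors (the columns of $U$) and one of $n$ vectors (the columns of $V$).
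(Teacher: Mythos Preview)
Your proof is correct and follows essentially the same construction as the paper: both set $U=\begin{pmatrix}I_m\\0\end{pmatrix}$ and $V=\begin{pmatrix}A\\B\end{pmatrix}$ with $B^\top B=I_n-A^\top A$, the only cosmetic difference being that the paper obtains $B$ via a rank-$s$ Cholesky factor while you use a truncated spectral decomposition. Your explicit justification of $s\le n-1$ from $\|A\|=1$ is a nice addition that the paper leaves implicit.
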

\begin{proof}
  Since $I_n-A^\top A$ has rank $s$ and is positive semidefinite, it admits a Cholesky decomposition $I_n-A^\top A = LL^\top $ with $L\in \f R^{n\times s}$. 
Define $U\in \f R^{(m+s)\times m}$ and $V\in \f R^{(m+s)\times n}$ as  $U = \begin{pmatrix}
    I_m\\ 0
\end{pmatrix}$, $V := \begin{pmatrix}
    A\\L^\top 
\end{pmatrix},$ 
so that $ V^\top V =  A^\top A + LL^\top  = 
  I_n$, that is, all columns of $V$ are orthogonal to each other, have unit norm, and $A =  U^\top V.$
\end{proof}

With the last result, we can now prove the equivalence between the computation of the least $(P,Q)$-singular values and the maximal angle between two cones. 

\begin{theorem}
   \label{th:reduction_SVcones_anglescones}   Let $A\in\mathbb R^{m\times n}$ be nonzero, $m\geq n$, and $\Vert A\Vert^{-1}A=U^\top V$ be the decomposition given in Lemma \ref{lem:orthogonal_decomposition}. Then,
 $(u^*,v^*)$ solves $\SV(A,P,Q)$ if and only if $(Uu^*,Vv^*)$ solves $\MA(\wt P,\wt Q)$ where $\wt P:=UP$ and $\wt Q:=VQ$. 
\end{theorem}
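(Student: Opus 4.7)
The plan is to transform the bilinear objective $\langle u,Av\rangle$ into an inner product $\langle x,y\rangle$ via the change of variables $x=Uu$, $y=Vv$ supplied by Lemma~\ref{lem:orthogonal_decomposition}, and then verify that this change of variables induces a norm-preserving bijection between the feasible set of $\SV(A,P,Q)$ and that of $\MA(\wt P,\wt Q)$. Since $A=\|A\|\,U^\top V$, one has
\[
\langle u,Av\rangle \;=\; \|A\|\,\langle u,U^\top V v\rangle \;=\; \|A\|\,\langle Uu, Vv\rangle,
\]
so the two objectives differ only by the positive scalar $\|A\|$, which does not affect the set of minimizers.

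Next I would handle feasibility. Because $U$ and $V$ have orthonormal columns, they are isometric embeddings: $\|Uu\|=\|u\|$ and $\|Vv\|=\|v\|$. Hence $\|u\|=1$ iff $\|Uu\|=1$, and similarly for $v$. By the very definition of $\wt P=UP$ and $\wt Q=VQ$, the map $u\mapsto Uu$ sends $P$ onto $\wt P$, and analogously for $Q$. Orthonormality of the columns of $U$ also makes this map injective, so it restricts to a bijection between $\{u\in P:\|u\|=1\}$ and $\{x\in\wt P:\|x\|=1\}$, with inverse $x\mapsto U^\top x$; the same holds for $V$. Combining this bijection with the scaling identity for the objective establishes the ``iff'' correspondence of solution pairs claimed in the theorem, and shows that the two optimal values are related by the factor $\|A\|$.

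Finally, for the polynomial-time reduction, I would note that the decomposition $A=\|A\|U^\top V$ is produced by Lemma~\ref{lem:orthogonal_decomposition} in polynomial time (it only requires the spectral norm, the matrix $I_n-\|A\|^{-2}A^\top A$, and its Cholesky factor). If $P=G\mathbb R^p_+$ and $Q=H\mathbb R^q_+$, then
\[
\wt P \;=\; U G\,\mathbb R^p_+, \qquad \wt Q \;=\; V H\,\mathbb R^q_+,
\]
so $\wt P$ and $\wt Q$ are polyhedral cones described by generator matrices of the same sizes as $G$ and $H$, computable in polynomial time from the input data. Because $U,V$ have $m+s\le m+n-1$ rows, the ambient dimension of $\MA(\wt P,\wt Q)$ is at most $m+n$, matching the bound stated before the theorem.

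The main obstacle is essentially bookkeeping: one must be careful that the bijection $(u,v)\leftrightarrow(Uu,Vv)$ genuinely matches feasible points \emph{and} preserves optimality in both directions. The key facts making this work are the column-orthonormality of $U,V$ (giving isometry, injectivity, and surjectivity onto $\wt P,\wt Q$) together with the factorization $A=\|A\|U^\top V$; once these are in place, the rest is a direct translation of the two optimization problems.
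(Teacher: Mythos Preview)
Your proof is correct and follows essentially the same route as the paper: both arguments use the factorization $A=\|A\|\,U^\top V$ together with the column-orthonormality of $U,V$ to rewrite the objective as $\|A\|\langle Uu,Vv\rangle$ and to identify the feasible sets via the isometries $u\mapsto Uu$, $v\mapsto Vv$. Your write-up is in fact a bit more explicit than the paper's about the bijectivity of this correspondence and about why the reduction runs in polynomial time.
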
 
\begin{proof}

Since $U$ and $V$ have orthonormal columns, $\SV(A,P,Q)$ can be transformed as follows: 
\begin{eqnarray*}
   \lambda^*&:=&\min_{\scriptsize \begin{array}{l}u\in P\cap S_m,\\v\in Q\cap S_n\end{array}}\langle u,Av\rangle = \Vert A\Vert
    \min_{\scriptsize \begin{array}{l}u\in P\cap S_m,\\v\in Q\cap S_n\end{array}}\langle Uu,Vv\rangle =
     \Vert A\Vert\min_{\scriptsize \begin{array}{l}x\in \wt P\cap S_{m+s},\\y\in \wt Q\cap S_{m+s}\end{array}}\langle x,y\rangle 
     \;=:\;\delta^*,
\end{eqnarray*}
where $\wt P=UP$ and $\wt Q=VQ$ are closed convex cones in $\f R^{m+s}$ that are isometrically equivalent to $P,Q$.  Now, suppose that $(u^*,v^*)$ solves $\SV(A,P,Q)$. Then $\lambda^*=\langle u^*,Av^*\rangle=\Vert A\Vert\langle Uu^*,Vv^*\rangle \geq \delta^*=\lambda^*,$
where the second equality holds because $\Vert A\Vert^{-1}A=U^\top V$, and the inequality holds because $(Uu^*,Vv^*)$ is feasible for $\MA(\wt P,\wt Q)$. Hence, $(Uu^*,Vv^*)$ solves $\MA(\wt P,\wt Q)$. The converse is analogous.
\end{proof}

\begin{remark}[Further connection between $\SV(A,P,Q)$ and $\MA(\wt P,\wt Q)$] 
The proof of the above theorem also draws the following connection between problems $\SV(A,P,Q)$ and $\MA(\wt P,\wt Q)$: $(u,v)$ is a critical pair of $\SV(A,P,Q)$ if and only if $(Uu,Vv)$ is a critical pair between $\wt P$ and $\wt Q$. Furthermore, $\sigma$ is a $(P,Q)$-singular value of $A$ if and only if $\arccos(\sigma)$ is a critical angle between $\wt P$ and $\wt Q$.
\end{remark}

In order to prove that $\SV(A,P,Q)$ reduces in polynomial time to $\MA(\wt P,\wt Q)$ for opportune $\wt P$ and $\wt Q$ with dimensions and generators that can be expressed polynomially in the dimensions and generators of $A$, $P$ and $Q$, we need a different decomposition $A=cU^\top V$ that preserves the rationality of the matrices when $A\in \f Q^{m\times n}$. Notice that the argument of Theorem \ref{th:reduction_SVcones_anglescones} does not produce a polynomial reduction since  $\|A\|$ might be irrational and the matrices $U,V$ might have irrational entries. 
\begin{lemma}
\label{lem:orthogonal_rational_decomposition}  
Any matrix $A\in \f Q^{m\times n}$ with $m\ge n$ can be decomposed as $A=cU^\top V$ where  
{$U\in \f Q^{(m+\binom{n}{2}+t)\times m}$ and $V\in \f Q^{(m+\binom{n}{2}+t)\times n}$} are matrices with orthonormal columns, and $c\in\f Q$. 
The parameter $t$, all the entries of $U,V$, and the computational time of the decomposition are all polynomial in $m,n$ and in the largest number of bits used to represent an entry in $A$.  
\end{lemma}
\begin{proof}
Let $\ol a$ be the largest number of bits used to represent an entry in $A$, and call $a_1,\dots,a_n$ the columns of $A$.
Let $B\in \f Q^{\binom n2\times n}$ be a matrix with $\binom n2$ rows, one for each pair $(a_i,a_j)$ of columns of $A$ with $i< j$, and suppose that such row is zero apart from the $i$-th entry equal to $1$ and the $j$-th entry equal to $-\langle a_i,a_j\rangle$. Notice that each entry $-\langle a_i,a_j\rangle$  can be expressed with $\mathcal O(\ol a + m)$ bits and it can be computed in $\mathcal O(m\ol a^2)$ time.  

If now we call $Q:= \begin{pmatrix}
    A\\B
\end{pmatrix}$, we find that its columns $q_i$ are pairwise orthogonal. Let  $d$ be the product of the denominators of the fractions $\|q_i\|^2\in\f Q$. 
The matrix $dQ$ has orthogonal columns, whose norm squared are now $p_i:=d^2\|q_i\|^2\in \f N$. 
Let $\ell\in\f N$ be the least integer such that $p_i<\ell^2$ for every $i$, and call $n_i:= \ell^2 - p_i\in\f N$. 
Each  $\|q_i\|^2$ can be computed in $\mathcal O(m\ol a^2 + n(\ol a + m)^2 )= \mathcal O(m(\ol a + m)^2)$ time and can be expressed with $\mathcal O(\ol a + m + n ) = \mathcal{O}(\ol a + m)$ bits, and the same can be said for $d$, $p_i$, $\ell$ and $n_i$. 

Write each $n_i$ as a sum of $t_i$ perfect squares by following the standard greedy approach: find the greatest integer $n_{i,1}$ for which $n_i = {n_{i,1}}^2 + m_i$, with $m_i\ge 0$ and repeat on $m_i$ until it reaches zero. Since for $n_i\ge 9$ one has $m_i\le n_i/2$,  there are $t_i =\mathcal O(\log(n_i))= \mathcal{O}(\ol a + m)$ perfect squares $n_{i,1}, \dots, n_{i,t_i}$ in the decomposition and it is possible to compute them all in $\mathcal O(\log(n_i)^3)$ time. Notice that $n_{i,j}\le n_i$, so they are also representable with $\mathcal{O}(\ol a + m)$ bits.

Let now $N\in \f Q^{t\times n}$ be a matrix with $t = \sum_i t_i$ rows, where for each index $i$ we have the rows $n_{i,1}e_i^\top , \dots, n_{i,t_i}e_i^\top$. Define $U\in \f Q^{(m+\binom n2+t)\times m}$ and $V\in \f Q^{(m+\binom n2+t)\times n}$ as  $U = \begin{pmatrix}
    I_m\\ 0
\end{pmatrix}$, $ V := \frac 1{\ell}\begin{pmatrix}
    dA\\dB\\N
\end{pmatrix}$. By construction, we have that $U,V$ have orthonormal columns and $\frac \ell d U^\top V =  A$. 

Eventually, notice that the parameter $t$ can be expressed with $\mathcal O(\log(n(\ol a + m )))$ bits,
whereas the parameter $c = \ell/d$ and each entry of $V$  can all be expressed with $ \mathcal{O}(\ol a + m)$ bits.
The computational cost of the whole decomposition is thus $\mathcal O(\ol a^2 mn^2 + m(\ol a + m)^3)$.
\end{proof}
Combining the proof of Theorem \ref{th:reduction_SVcones_anglescones} with the new decomposition given by Lemma \ref{lem:orthogonal_rational_decomposition}, we find that  $\SV(A,P,Q)$ reduces in polynomial time to $\MA(\wt P,\wt Q)$ where $\wt P:=UP$ and $\wt Q:=VQ$.

Since  $\SV(A,P,Q)$ is NP-hard (Corollary \ref{cor:ConeSV_is_NPHard}) and it reduces to $\MA(P,Q)$, 
 we get the following corollary.

\begin{corollary}
      \label{cor:angles_cones_NPhard}  Computing the maximal angle between two polyhedral cones $\MA(P,Q)$ is NP-hard.  
\end{corollary}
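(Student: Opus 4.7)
The plan is essentially a two-line argument combining the two preceding results. First, by Theorem~\ref{th:SVcones_NPhard}, the problem $\PSV(A) = \SV(A, I_m, I_n)$ is NP-hard; here the cones $P = \R^m_+$ and $Q = \R^n_+$ are polyhedral, generated by $G = I_m$ and $H = I_n$. Second, Theorem~\ref{th:reduction_SVcones_anglescones} provides a reduction mapping any instance $(A, P, Q)$ of $\SV$ to an instance $(\wt P, \wt Q)$ of $\MA$, with $\wt P = UP$ and $\wt Q = VQ$, such that their optimal values satisfy $\lambda^* = \|A\|\,\delta^*$. Chaining these two results gives a reduction from $\PSV$ to $\MA$, and the NP-hardness of $\MA$ follows.

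To see that this chain is indeed a polynomial-time reduction, I would verify the following bookkeeping. Starting from $A\in\R^{m\times n}$ with $m\ge n$ (w.l.o.g., otherwise transpose), the cones $\wt P$ and $\wt Q$ live in $\R^{m+s}$ with $s\le n-1$, and are generated by the $m$ columns of $U$ and the $n$ columns of $V$ respectively; hence the output size is polynomial in the input size. The construction of $U$ and $V$ in Lemma~\ref{lem:orthogonal_decomposition} reduces to a Cholesky factorization of $I_n - \|A\|^{-2}A^\top A$, which is a polynomial-time operation. Once an oracle for $\MA$ returns an optimal pair $(x^*, y^*)$ with value $\delta^*$, we recover $\lambda^* = \|A\|\,\delta^*$ and, by the orthonormality of the columns of $U,V$, the pair $(U^\top x^*, V^\top y^*)$ is feasible and optimal for $\PSV(A)$.

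The only subtlety is that dividing by $\|A\|$ may introduce irrationals even when $A$ is rational, so the Cholesky factor of $I_n - \|A\|^{-2}A^\top A$ is not literally computed over $\mathbb{Q}$. This is not a genuine obstacle and can be handled in the usual way: either invoke the real-RAM model, or scale $A$ by any easily-computable rational upper bound $c\ge\|A\|$ (e.g.\ $c^2 = \|A\|_F^2 \in \mathbb{Q}$), so that $I_n - c^{-2} A^\top A \succeq 0$ remains rational and the entire construction goes through with $\lambda^* = c\,\delta^*$. Since all mathematical content has already been supplied by Theorems~\ref{th:SVcones_NPhard} and~\ref{th:reduction_SVcones_anglescones}, I do not anticipate any further difficulty.
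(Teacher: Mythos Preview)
Your proposal is correct and follows essentially the same approach as the paper: chain Theorem~\ref{th:SVcones_NPhard} (NP-hardness of $\PSV$) with Theorem~\ref{th:reduction_SVcones_anglescones} (polynomial-time reduction of $\SV$ to $\MA$). Your write-up is in fact more careful than the paper's, which simply invokes the two theorems without the bookkeeping on output size, the Cholesky construction, or the rationality issue you address.
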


\section{Some cases solvable in polynomial time} \label{sec:simplecases}

Although the problems considered in this paper are NP-hard in general, as proved in the previous section, 
it turns out that there are a few interesting cases when these problems can be solved in polynomial time. This is the focus of this section. This will be useful when designing algorithms in the next section, allowing us to check beforehand whether a given problem can be solved easily.

Let us define the \textit{Nash pairs} $(u^*, v^*)\in P\times Q$ of the problem $\SV(A,P,Q)$ as the  vectors such that
\[
\langle u, A v^*\rangle \ge \langle u^*, A v^*\rangle, \qquad \langle u^*, A v\rangle \ge \langle u^*, A v^*\rangle, \qquad \forall\, (u, v)\in P\times Q : \|u\| = \|v\| = 1.
\]
Observe that the above means that $u^*$ solves $\SV(A,P,\mathbb R_+ v^*)$ and $v^*$ solves $\SV(A,\R_+ u^*,Q)$. Observe also that any solution pair and all local minima with nonpositive values of $\SV(A,P,Q)$ are Nash pairs.

\subsection{Solving $\SV(A,P,Q)$ when $G^\top AH\ge 0$} 

When $G^\top AH\ge 0$,  it is easy to identify all Nash pairs and, by extension, all solution pairs. 
\begin{proposition}\label{pro:nonnegative_case_SVcones}
    Let $P$ and $Q$ be polyhedral cones generated by $G$ and $H$, respectively, as in~\eqref{polyhedral}. 
    Suppose that $G^\top A H$ is a nonnegative matrix.
    Then, there exists a solution pair $(u,v)$ of $\SV(A,P,Q)$ where $u$ and $v$ are generators 
    of $P$ and $Q$, respectively, and the optimal value of $\SV(A,P,Q)$ is the minimum entry of $G^\top A H$. Moreover, if $G^\top A H$ is strictly positive, then any Nash pair is achieved by a couple of generators in $P$ and $Q$.
\end{proposition}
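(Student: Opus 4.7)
The plan is to parameterize feasible points through the generators: any unit $u \in P$ takes the form $u = Gx$ with $x \ge 0$, and any unit $v \in Q$ takes the form $v = Hy$ with $y \ge 0$. Setting $M := G^\top A H \ge 0$, the objective becomes
\[
\langle u, Av\rangle = x^\top M y = \sum_{i,j} M_{ij}\, x_i y_j \ge 0.
\]
The whole argument will then reduce to a clean inequality on $x^\top M y$ combined with the right choice of extremal point.

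First, I would establish the key ``$\ell_1$-lower bound'': $e^\top x \ge 1$ whenever $x \ge 0$ and $\|Gx\|=1$. This is an immediate consequence of the triangle inequality together with $\|g_i\|=1$:
\[
1 = \|Gx\| = \Big\|\sum_i x_i g_i\Big\| \le \sum_i x_i \|g_i\| = e^\top x.
\]
An identical bound holds for $y$. Combining these with $M\ge 0$ gives
\[
x^\top M y \;\ge\; \Big(\min_{i,j} M_{ij}\Big)\,(e^\top x)(e^\top y) \;\ge\; \min_{i,j} M_{ij}.
\]
Let $(i^*,j^*)$ attain the minimum of $M_{ij}$. Taking $u = g_{i^*}$ and $v = h_{j^*}$ (which are unit vectors in $P,Q$ since each column of $G,H$ is a unit generator) yields $\langle u,Av\rangle = M_{i^*j^*}$, so the lower bound is tight and the optimum equals $\min_{i,j} M_{ij}$, attained at a pair of generators.

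For the ``moreover'' claim, suppose $M>0$ entrywise and $(u^*,v^*)$ is a Nash pair, with $v^* = Hy^*$, $y^*\ge 0$, $y^*\ne 0$. Then $c := G^\top A v^* = My^* > 0$ entrywise. By the Nash property, $u^*=Gx^*$ must solve
\[
\min\{\,x^\top c \,:\, x\ge 0,\ \|Gx\|=1\,\}.
\]
Arguing as above, $x^\top c \ge (\min_i c_i)(e^\top x^*) \ge \min_i c_i$, so equality must hold in both steps. The first equality forces $x^*$ to be supported only on indices where $c_i = \min c$, and the second forces $e^\top x^* = 1$. Combined with $\|Gx^*\| = 1$, this means equality is attained in the triangle inequality, which (since $\|g_i\|=1$) requires all $g_i$ in the support of $x^*$ to coincide as unit vectors. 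Because the columns of $G$ are conically independent unit vectors, they are pairwise distinct directions, so $x^*$ must be supported on a single index $i^*$, giving $u^* = g_{i^*}$. The symmetric argument applied to $v^*$ then shows $v^* = h_{j^*}$.

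The only subtle step is the very last one: deducing from $\|\sum_{i \in \mathrm{supp}(x^*)} x^*_i g_i\| = \sum x^*_i$ and unit norms that the active $g_i$ all coincide, and then invoking conic independence (which excludes repeated directions among the generators) to conclude the support is a singleton. The rest is routine manipulation of the quadratic form $x^\top M y$ under the nonnegativity hypothesis.
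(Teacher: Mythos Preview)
Your proof is correct, and it takes a genuinely different route from the paper's.

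For the first claim (optimal value $=\min_{i,j}(G^\top AH)_{ij}$), the paper argues indirectly: it first observes $0\le\lambda^*\le\min_{i,j}\langle g_i,Ah_j\rangle$, disposes of the zero-entry case, and then in the strictly positive case invokes the Nash-pair result (the ``moreover'') to conclude that optimal pairs are generators, hence $\lambda^*=\min_{i,j}(G^\top AH)_{ij}$. Your argument is more direct and self-contained: the $\ell_1$-lower bound $e^\top x\ge 1$ (from the triangle inequality and $\|g_i\|=1$) immediately gives $x^\top My\ge(\min_{i,j}M_{ij})(e^\top x)(e^\top y)\ge\min_{i,j}M_{ij}$, with no case split and no need to first establish the Nash-pair structure.

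For the ``moreover'', the paper works coordinate-free: given a Nash pair $(u,v)$ it writes $u=\alpha_1u_1+\alpha_2u_2$ with $u_1,u_2\in P$ unit vectors and $\alpha_1,\alpha_2>0$, uses the Nash inequality and $\langle u,Av\rangle>0$ to get $\alpha_1+\alpha_2\le 1$, and then the norm constraint forces $\langle u_1,u_2\rangle\ge 1$, so $u_1=u_2$; this shows $u$ lies on an extreme ray of $P$. Your approach instead pushes the $\ell_1$ inequality to its equality case: from $x^{*\top}c=\min_i c_i$ you extract that $x^*$ is supported on the minimizing indices and $e^\top x^*=1$, then equality in the triangle inequality plus conic independence of the $g_i$ collapses the support to a single index. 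Both arguments are short; the paper's is a bit more geometric and avoids naming coordinates, while yours reuses the same inequality that already delivered the optimal value and makes the role of the normalization $\|g_i\|=1$ and the conic-independence assumption very explicit.
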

\begin{proof}
    Notice that since the columns of $G$ and $H$ have unit norm, we have,
    \[
    0 \quad \le \quad  \lambda^*:=\min_{\scriptsize \begin{array}{l}u\in P\cap S_m,\\v\in Q\cap S_n\end{array}}\langle u,Av\rangle = 
      \min_{\scriptsize \begin{array}{l}x\ge0 ,\,\Vert Gx\Vert=1,\\y\ge 0,\,\Vert Hy\Vert=1\end{array}}\langle G x,  AHy\rangle  \quad  \le   \quad  \min_{i,j}\langle g_i, A h_j\rangle. 
    \]
    Thus, if $G^\top A H$ has a zero entry then $(G^\top A H)_{i,j}=\langle g_i, A h_j\rangle=0$ for some $i,j$. From the above relation, we get $\lambda^*=0$, and the thesis is proved with $u = g_i$ and $v = h_j$. 
    
    Suppose now that $G^\top A H>0$ which means that $\min_{i,j}\langle g_i, A h_j\rangle>0$. Notice that this implies $\langle u, A v\rangle = \langle Gx,  AHy\rangle>0$ for all nonzero and nonnegative $x,y$, that is, any nonzero $u,v$ in the respective cones. Suppose that $(u,v)$ is a Nash pair and that $u = \alpha_1 u_1 + \alpha_2 u_2$ with $u_1,u_2\in P$, $\|u\| = \|u_1\| = \|u_2\| = 1$, and $\alpha_1,\alpha_2>0$. Then 
    \[
    \langle u, Av \rangle = \alpha_1\langle u_1, Av\rangle + \alpha_2\langle u_2, Av\rangle\ge (\alpha_1 + \alpha_2) \langle u, Av\rangle\implies 1 \ge \alpha_1+\alpha_2, 
    \]
and, at the same time, 
\[
1 = \|u\|^2 = \|\alpha_1 u_1 + \alpha_2 u_2\|^2 = \alpha_1^2 + \alpha_2^2 +2\alpha_1\alpha_2\langle u_1, u_2\rangle\le 1 + 2\alpha_1\alpha_2(\langle u_1, u_2\rangle-1)
\]
\[\implies \langle u_1, u_2\rangle \ge 1 \implies \langle u_1, u_2\rangle = 1 \implies u_1 = u_2.\]
This proves that $u$ must be a generator of $P$. By symmetry,  $v$ must also be a generator of $Q$. Therefore, since any solution pair is a Nash pair we have that if $(u,v)$ is a solution pair of $\SV(A,P,Q)$ then $u=g_i$ and $v=h_j$ for some $i,j$, and $\lambda^*=\min_{i,j}\langle g_i, A h_j\rangle=\min_{i,j}(G^\top A H)_{i,j}$.
\end{proof}

By specializing Proposition\,\ref{pro:nonnegative_case_SVcones} to the problems $\MA(P,Q)$ and $\PSV(A)$, we obtain the following two corollaries, which were already known in the literature.

\begin{corollary}\cite[Theorem 2]{orlitzky2020maximal}  \label{cor:simple_case_angle_dual_cone}
Let $P$ and $Q$ be polyhedral cones generated by $G$ and $H$, respectively, as in~\eqref{polyhedral}. Assume that $Q\subseteq P^*$. Then, the maximal angle between $P$ and $Q$ are achieved by a couple of generators in $P$ and $Q$, that is, the maximal angle between $P$ and $Q$ is $\arccos \left( \min_{i,j}\langle g_i,h_j\rangle \right) \in [0,\pi/2]$.
\end{corollary}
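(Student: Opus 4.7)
The plan is to apply Proposition~\ref{pro:nonnegative_case_SVcones} with the specialization $A = I_n$, since by definition $\MA(P,Q) = \SV(I_n, P, Q)$.

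Under this specialization, the hypothesis $G^\top A H \ge 0$ of Proposition~\ref{pro:nonnegative_case_SVcones} becomes $G^\top H \ge 0$, i.e., $\langle g_i, h_j \rangle \ge 0$ for every generator $g_i$ of $P$ and every generator $h_j$ of $Q$. First I would verify that this entrywise nonnegativity follows from the assumption $Q \subseteq P^\ast$: by definition of the dual cone, every vector $y \in P^\ast$ satisfies $\langle x, y \rangle \ge 0$ for all $x \in P$, so since each $h_j \in Q \subseteq P^\ast$ and each $g_i \in P$, the matrix $G^\top H$ has all entries nonnegative.

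Then Proposition~\ref{pro:nonnegative_case_SVcones} applies and yields a solution pair $(u,v)$ of $\MA(P,Q)$ of the form $u = g_i$ and $v = h_j$ for some indices $i,j$, with optimal value equal to the minimum entry of $G^\top H$, namely $\min_{i,j}\langle g_i, h_j\rangle$. Since this minimum is nonnegative and bounded above by $1$ (as the columns of $G$ and $H$ are unit vectors, so by Cauchy--Schwarz $\langle g_i, h_j\rangle \le 1$), its arccosine lies in $[0, \pi/2]$, which gives the claimed formula for the maximal angle.

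There is essentially no obstacle here: the corollary is a direct specialization of Proposition~\ref{pro:nonnegative_case_SVcones}, and the only substantive step is translating the geometric hypothesis $Q \subseteq P^\ast$ into the algebraic condition $G^\top H \ge 0$, which is immediate from the definition of the dual cone.
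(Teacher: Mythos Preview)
Your proposal is correct and follows essentially the same approach as the paper: translate $Q \subseteq P^\ast$ into $G^\top H \ge 0$ via the definition of the dual cone, and then invoke Proposition~\ref{pro:nonnegative_case_SVcones} with $A = I_n$ using $\MA(P,Q) = \SV(I_n,P,Q)$. The paper additionally notes that $G^\top H \ge 0$ is in fact equivalent to $Q \subseteq P^\ast$, but only the implication you prove is needed for the corollary.
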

\begin{proof}
Since $P=G(\mathbb R^p_+)$, $z\in P^*$ if and only if $G^\top z\geq 0$. Then,
$$G^\top H\geq 0\;\Leftrightarrow\; \left(G^\top H y\geq 0,\,\forall y\in\R^q_+\right)\;\Leftrightarrow\;\left(\forall v,\, v\in Q\;\Rightarrow\; G^\top v\geq 0\right)\Leftrightarrow Q\subseteq P^*.$$
The result follows from Proposition~\ref{pro:nonnegative_case_SVcones} by recalling that $\MA(P,Q)=\SV(I,P,Q)$. 
\end{proof}

\begin{corollary}\cite[Corollary 1]{seeger2023PSV} 
Let $A=(a_{i,j})\in\R^{m\times n}$ be nonnegative. Then, the least Pareto singular value of $A$ is $\min_{i,j}a_{i,j}$, and it is achieved by a couple of canonical vectors in $\R^m_+$ and $\R^n_+$.
\end{corollary}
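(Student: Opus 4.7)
The plan is to obtain this corollary as an immediate specialization of Proposition~\ref{pro:nonnegative_case_SVcones} to the case of the two nonnegative orthants. The key observation is that $\PSV(A) = \SV(A, \R^m_+, \R^n_+)$, and the nonnegative orthants $\R^m_+$ and $\R^n_+$ are polyhedral cones generated respectively by $G = I_m$ and $H = I_n$, whose columns are the canonical vectors $e_1,\dots,e_m$ and $e_1,\dots,e_n$ (which are unit vectors and conically independent, so conform to the standing assumption \eqref{polyhedral}).

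With that choice of generators, the matrix that governs the hypothesis of Proposition~\ref{pro:nonnegative_case_SVcones} collapses to
\[
G^\top A H \;=\; I_m^\top\, A\, I_n \;=\; A.
\]
Since $A$ is assumed nonnegative, the hypothesis $G^\top A H \ge 0$ of Proposition~\ref{pro:nonnegative_case_SVcones} is satisfied. Invoking that proposition therefore gives a solution pair $(u,v)$ of $\SV(A,\R^m_+,\R^n_+)$ where $u$ is a generator of $\R^m_+$ (that is, $u = e_i$ for some $i\in[m]$) and $v$ is a generator of $\R^n_+$ (that is, $v = e_j$ for some $j\in[n]$), and the optimal value equals the minimum entry of $G^\top A H = A$, namely $\min_{i,j} a_{i,j}$.

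There is essentially no obstacle here: the only thing to check is that the standing assumption of unit conic generators is met, which holds trivially since $\|e_i\| = 1$ for all $i$. The proof amounts to a single-line invocation of Proposition~\ref{pro:nonnegative_case_SVcones} together with the observation $\langle e_i, A e_j\rangle = a_{i,j}$.
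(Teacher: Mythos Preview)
Your proof is correct and follows exactly the same approach as the paper: specialize Proposition~\ref{pro:nonnegative_case_SVcones} with $G=I_m$ and $H=I_n$, observe that $G^\top A H = A \ge 0$, and read off the conclusion. The paper's own argument is in fact just the one-line version of what you wrote.
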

\begin{proof}
    Since $\PSV(A) = \SV(A,I,I)$, then by Proposition \ref{pro:nonnegative_case_SVcones} the optimal value is the minimum entry of $G^\top A H = A$. 
\end{proof}

\subsection{Identifying saddle points}

In general, the problem $\SV(A,P,Q)$ has many critical pairs. For instance, in \cite{seeger2023SV} it is shown that for all $m,n\geq 1$ there exists $A\in\R^{m\times n}$ such that $\PSV(A)$ has $(2^m-1)(2^n-1)$ critical pairs. In some cases, many of the critical pairs are saddle points. In Section\,\ref{sec:exactalgo}, we will see a method to solve
$\PSV(A,P,Q)$ by confronting all critical pairs of the problem.
Hence, 
it will be advantageous to recognize in advance which critical pairs are
saddle points so they can be omitted (see Algorithm\,\ref{alg:active_set}).

\begin{theorem}
 \label{the:saddle}   Let  $(u,v)$ be a critical pair of $\MA(P,  Q)$ and let $u\in F_u$, $v\in F_v$ where $F_u$, $F_v$ are the faces of $P, Q$ with the least dimension possible containing $u,v$. If $\dim(F_u) + \dim(F_v)> n$ and $v\ne \pm u$, then $(u,v)$ is a saddle point.  
\end{theorem}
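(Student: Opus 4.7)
The plan is to apply the criterion stated in the paragraph preceding the theorem: to certify that $(u,v)$ is a saddle point it suffices to produce admissible twice-differentiable curves $\theta \mapsto u(\theta)$ and $\varphi \mapsto v(\varphi)$ along which the Hessian of $(\theta,\varphi)\mapsto \langle u(\theta),v(\varphi)\rangle$ at $(0,0)$ has negative determinant. I will use a single common unit vector $t$ to define both curves as
\[
u(\theta) := \cos\theta\cdot u + \sin\theta\cdot t, \qquad v(\varphi) := \cos\varphi\cdot v + \sin\varphi\cdot t,
\]
so the task reduces to choosing $t$ well.

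Write $\sigma := \langle u,v\rangle$ and $L_u := \operatorname{span}(F_u)$, $L_v := \operatorname{span}(F_v)$. Because $u$ lies in the relative interior of $F_u$, the KKT condition $u \perp (v - \sigma u) \in P^\ast$ promotes to the linear-space orthogonality $v - \sigma u \perp L_u$; analogously $u - \sigma v \perp L_v$. From the first relation, every $w \in W := L_u \cap u^\perp$ satisfies $\langle w, v\rangle = \langle w, \sigma u + (v-\sigma u)\rangle = 0$, so $W \subseteq v^\perp$. Symmetrically $Z := L_v \cap v^\perp \subseteq u^\perp$. Hence both $W$ and $Z$ are contained in $\{u,v\}^\perp$, which has dimension $n-2$ because $v \ne \pm u$ forces $u$ and $v$ to be linearly independent.

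The dimension count now yields
\[
\dim W + \dim Z = (\dim F_u - 1) + (\dim F_v - 1) > n - 2,
\]
so $W \cap Z$ contains a unit vector $t$. This $t$ is tangent to the unit sphere in $L_u$ at $u$ and to the unit sphere in $L_v$ at $v$; because $u$ and $v$ lie in the relative interiors of $F_u$ and $F_v$ and $t\in L_u\cap L_v$, the curves $u(\theta)$ and $v(\varphi)$ stay in $F_u$ and $F_v$ for $|\theta|,|\varphi|$ small, so $(u(\theta),v(\varphi))$ is feasible for $\MA(P,Q)$. Using $t\perp u$, $t\perp v$ and $\|t\|=1$, I would expand
\[
f(\theta,\varphi) := \langle u(\theta), v(\varphi)\rangle = \sigma\cos\theta\cos\varphi + \sin\theta\sin\varphi,
\]
whose Hessian at the origin is $\left(\begin{smallmatrix}-\sigma & 1\\ 1 & -\sigma\end{smallmatrix}\right)$ with determinant $\sigma^2 - 1 < 0$, where the strict inequality again uses $v \ne \pm u \Rightarrow |\sigma|<1$. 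This indefiniteness certifies the saddle.

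The step I expect to be the main obstacle is the upgrade from the KKT complementarity $u \perp (v - \sigma u) \in P^\ast$ to the full subspace orthogonality $v - \sigma u \perp L_u$. This relies on the minimality of $F_u$: since $u$ is in the relative interior, every direction of $L_u$ can be used to perturb $u$ while keeping it in $P$, and combining this with $v - \sigma u \in P^\ast$ and $\langle u, v - \sigma u\rangle = 0$ forces $\langle w, v - \sigma u\rangle = 0$ for all $w \in L_u$. Once this subspace orthogonality is in place, the remainder is the two-line dimension count and the elementary Hessian computation above.
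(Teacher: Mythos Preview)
Your argument is correct and lands on the same endgame as the paper: pick a unit vector $t\in L_u\cap L_v$ orthogonal to both $u$ and $v$, run the two great-circle curves $u(\theta)=\cos\theta\,u+\sin\theta\,t$ and $v(\varphi)=\cos\varphi\,v+\sin\varphi\,t$, and compute the Hessian $\left(\begin{smallmatrix}-\sigma&1\\1&-\sigma\end{smallmatrix}\right)$ with determinant $\sigma^2-1<0$.

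The difference lies in how that vector $t$ is produced. The paper first picks an arbitrary $z\in S_u\cap S_v$ (using only $\dim S_u+\dim S_v>n$), then spends most of its work showing $z\perp u$ and $z\perp v$ \emph{a posteriori}: it rules out $v\in S_u$ via a separate one-parameter curve $u_\alpha$, argues $\{u,v,z\}$ are linearly independent, and finally imposes the first-order condition along the rotation curves to force $\langle u,z\rangle=\langle v,z\rangle=0$. You instead exploit the complementarity form \eqref{d1} of a critical pair: from $v-\sigma u\in P^\ast$, $\langle u,v-\sigma u\rangle=0$, and $u$ in the relative interior of $F_u$, you get $v-\sigma u\perp L_u$ in one step, hence $W:=L_u\cap u^\perp\subseteq\{u,v\}^\perp$ (and symmetrically for $Z$), and the dimension count inside $\{u,v\}^\perp$ gives $t\in W\cap Z$ directly. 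This is shorter and avoids the case analysis $v\in S_u$ and the linear-independence detour; the paper's route, on the other hand, never invokes the dual-cone formulation and works purely with first-order conditions along explicit curves.
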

\begin{proof}
It is enough to construct twice continuously differentiable curves $\theta\mapsto u(\theta)$ and $\varphi\mapsto v(\varphi)$ (both around $0$) such that $(u(0),v(0))=(u,v)$, $(u(\theta),v(\varphi))$ is feasible for $\MA(P,Q)$, and checking that the determinant of the Hessian of $(\theta,\varphi)\mapsto\langle u(\theta),v(\varphi)\rangle$ at $(0,0)$ is negative.

To do that, observe that $n>1$, and $u,v$ belong to the relative interior part of $F_u$, $F_v$. Call now $S_u$ and $S_v$ the span of $F_u$, $F_v$. From the hypothesis,     \[
    \dim(S_u) + \dim(S_v) = \dim(F_u) + \dim(F_v) > n \implies \dim(S_u\cap S_v) > 0.
    \]
The fact $v\ne \pm u$ ensures that $v\notin S_u$ and $u\notin S_v$. Let $z\in S_u\cap S_v$. Because of the above discussion, we have that $\{u,v,z\}$ is linearly independent. Let $R_u(\theta)$ be an orthogonal matrix that fixes all vectors in the subspace orthogonal to $V_u:= {\rm Span}\{u,z\}$ and rotates the vectors of the plane $V_u$ by an angle $\theta$. We take $u(\theta):=R_u(\theta)u$ and observe that $u(0)=u$, and $\theta\mapsto u(\theta)\in F_u\cap S_m$ is twice continuously differentiable for small $|\theta|$. Similarly, let $v(\varphi):=R_v(\varphi)v$ be constructed in the same way over the subspace $V_v:= {\rm Span}\{v,z\}$. We have that $v(0)=v$, and $\varphi\mapsto v(\varphi)\in F_v\cap S_n$ is twice continuously differentiable for small $|\varphi|$. Thus, $(u(\theta),v(\varphi))$ is feasible for $\MA(P,Q)$ around $(0,0)$, and the Hessian of $(\theta,\varphi)\mapsto\langle u(\theta),v(\varphi)\rangle$ at $(0,0)$ is
\[
\begin{pmatrix}
- \langle u,v\rangle &   1\\
 1& - \langle u,v\rangle
\end{pmatrix},
\]
which has determinant $\langle u,v\rangle^2 -1< 0$.  This proves that $(u,v)$ is always a saddle point. We omit the details of the above computation since they are elementary but quite technical.
\end{proof}

Through the reduction from  $\SV(A,P,Q)$ to $\MA(P,  Q)$ we can also identify saddle points for the general problem of conical singular values. 

\begin{corollary}
 \label{cor:saddle_ConeSV}   Let  $(u,v)$ be a critical pair of $\SV(A,P,Q)$ and let $u\in F_u$, $v\in F_v$ where $F_u$, $F_v$ are the faces of $P, Q$ with the least dimension possible.
 Let $r$ be the multiplicity of the singular value $\|A\|$ of the matrix $A$.   
 If $\dim(F_u) + \dim(F_v)> m+n-r$ and $\langle u, Av\rangle\ne \pm \|A\|$, then $(u,v)$ is a saddle point.  
\end{corollary}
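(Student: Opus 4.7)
The plan is to reduce to Theorem~\ref{the:saddle} through the isometric reformulation provided by Theorem~\ref{th:reduction_SVcones_anglescones}. Assume without loss of generality that $m \geq n$, and apply Lemma~\ref{lem:orthogonal_decomposition} to the unit-norm matrix $\|A\|^{-1}A$, writing $\|A\|^{-1}A = U^\top V$ with $U\in \R^{(m+s)\times m}$ and $V\in \R^{(m+s)\times n}$ having orthonormal columns, where $s = \rank(I_n - \|A\|^{-2}A^\top A)$. A quick spectral computation identifies $s$ with $n-r$: the singular values of $\|A\|^{-1}A$ are $\sigma_i(A)/\|A\|\in[0,1]$, and the value $1$ is attained exactly with multiplicity $r$, so $I_n - \|A\|^{-2}A^\top A$ has rank $n-r$. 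Hence the embedded cones $\wt P := UP$ and $\wt Q := VQ$ live in $\R^{m+s} = \R^{m+n-r}$.

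Next I would transfer the two hypotheses of Corollary~\ref{cor:saddle_ConeSV} to the setting of Theorem~\ref{the:saddle}. Because $U$ has orthonormal columns, $x\mapsto Ux$ is a linear isometry onto its image, and therefore maps the face lattice of $P$ bijectively onto that of $\wt P$ while preserving dimensions; the same holds for $V$ and $Q$. Thus the smallest faces of $\wt P,\wt Q$ containing $Uu,Vv$ are precisely $UF_u,VF_v$, of dimensions $\dim F_u,\dim F_v$. The assumption $\dim F_u + \dim F_v > m+n-r$ then matches the ambient-dimension condition of Theorem~\ref{the:saddle} applied in $\R^{m+s}$. Moreover, $\langle Uu,Vv\rangle = \|A\|^{-1}\langle u,Av\rangle$, so $Vv = \pm Uu$ (both unit vectors) would force $\langle u,Av\rangle = \pm\|A\|$, which is excluded by hypothesis; consequently $Vv \ne \pm Uu$.

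By the remark following Theorem~\ref{th:reduction_SVcones_anglescones}, $(Uu,Vv)$ is a critical pair of $\MA(\wt P,\wt Q)$, so Theorem~\ref{the:saddle} yields that $(Uu,Vv)$ is a saddle point of $\MA(\wt P,\wt Q)$. To finish, I would pull this saddle-point property back to $(u,v)$. Since $U$ restricts to an isometric bijection $P \to \wt P$ mapping unit vectors to unit vectors (and analogously for $V$ and $Q$), every feasible neighborhood of $(u,v)$ corresponds one-to-one with a feasible neighborhood of $(Uu,Vv)$, and the objective values are related by the positive scalar $\|A\|$. Therefore $(u,v)$ is a local minimizer (respectively maximizer) of $\SV(A,P,Q)$ if and only if $(Uu,Vv)$ is for $\MA(\wt P,\wt Q)$; since the latter is neither, $(u,v)$ is a saddle point. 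The main bookkeeping step is the identification $s = n-r$ together with the matching of face dimensions through the isometric embedding; everything else transfers mechanically from Theorem~\ref{the:saddle}.
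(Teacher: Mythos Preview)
Your proof is correct and follows essentially the same route as the paper's: reduce $\SV(A,P,Q)$ to $\MA(\wt P,\wt Q)$ via the decomposition $\|A\|^{-1}A=U^\top V$ of Lemma~\ref{lem:orthogonal_decomposition}, identify $s=n-r$, transfer the face-dimension and non-antipodality hypotheses through the isometric embeddings $U,V$, and then invoke Theorem~\ref{the:saddle}. Your write-up is in fact more explicit than the paper's on two points---the computation of $s$ and the pull-back of the saddle-point property---but the argument is the same.
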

\begin{proof}
Notice that the statement is symmetric in $P,Q$ just by changing $A$ into $A^\top$. As a consequence, we can suppose $m\ge n$. 
From the proof of Theorem \ref{th:reduction_SVcones_anglescones}, we can reduce the conic singular value problem to the conic angles problem through the decomposition $A = \|A\| U^\top V$ of Lemma \ref{lem:orthogonal_decomposition} as $\SV(A,P,Q)=\Vert A\Vert\MA(\wt P,\wt Q)$, 
where $\wt P=UP$ and $\wt Q=VQ$ are cones in $\f R^{m+s}$ and $(u,v)$ is a critical pair of $\SV(A,P,Q)$ if and only if $(x,y) = (Uu,Vv)$ is a critical pair of $\MA(\wt P,\wt Q)$.  
Here $s$ is equal to  $\rank(A^\top A-\|A\|^2 I)$, that is equal to  $n-r$.   
Since $\wt P$ is an immersion of $P$ into $\f R^{m+n-r}$, and  $x\in F_x$ where $F_x$ is the face of $\wt P$ with the least dimension possible containing $x$, then $\dim(F_x) = \dim (F_u)$. Analogously, $\dim(F_y) = \dim (F_v)$. Moreover,
\[
\langle u, Av\rangle= \pm \|A\| \iff \langle U u, V v\rangle = \pm 1 \iff x = Uu=\pm Vv= \pm y.
\]
Theorem~\ref{the:saddle} allows us conclude that if  $\dim(F_u) + \dim(F_v)> m+n-r$ then $(x,y)$ is a saddle point for  $\MA(\wt P,\wt Q)$ and as a consequence,  $(u,v)$ is a saddle point for $\SV(A,P,Q)$.  
\end{proof}

{Observe that $-\Vert A\Vert\leq \langle u,Av\rangle\leq\Vert A\Vert$ for all feasible pair $(u,v)$ of $\SV(A,P,Q)$.} Thus, the last result reduces the number and dimension of faces we need to test to find the optimal solution, but it requires that $\langle u, Av\rangle =  \pm  \|A\|$ is not attained for an optimal pair  $(u,v)$ of $\SV(A,P,Q)$. 
 The case $\langle u, Av\rangle =    \|A\|$ is easy to check, since it is equivalent to say that both $P,Q$ have only one generator and $G^\top AH$ is a $1\times 1$ nonnegative matrix. 

The case $\langle u, Av\rangle =  -  \|A\|$ is also identifiable in polynomial time; see 
in Section~\ref{subs:preprocessing} 
where we report the algorithm that checks whether such a solution exists. 

\subsection{Cases when an optimal vector is a generator}\label{sec:simple_cases_generators}

Another consequence of Theorem \ref{the:saddle} is that in low dimensions we know that all solution pairs $(u^*,v^*)$ of $\MA(P, Q)$ must contain at least a generator of the respective cone.
 
\begin{corollary}
     \label{cor:local_minimum_angles_have_generators_in_dim3} Let  $(u,v)$ be a local minimum of $\MA(P, Q)$ with $u \neq -v$ in dimension $n\le 3$. Then $u$ or $v$ is a generator of its respective cone. 
\end{corollary}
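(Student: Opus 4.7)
The plan is to argue by contradiction using Theorem \ref{the:saddle}. Assume $(u,v)$ is a local minimum of $\MA(P,Q)$ with $u \neq -v$, and that neither $u$ nor $v$ is a generator of its respective cone. Let $F_u$ and $F_v$ be, as in Theorem \ref{the:saddle}, the minimal faces of $P$ and $Q$ containing $u$ and $v$. Since $u$ lies in a unit sphere, $F_u \neq \{0\}$, and since $u$ is not a generator, $F_u$ is not a ray; hence $\dim(F_u) \geq 2$. Symmetrically $\dim(F_v) \geq 2$, so
\[
\dim(F_u) + \dim(F_v) \geq 4 > 3 \geq n.
\]

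To apply Theorem \ref{the:saddle} I need to verify the hypothesis $v \neq \pm u$. The assumption $u \neq -v$ handles one sign; the other case $u = v$ can be ruled out as follows. If $u = v$, then $\langle u,v\rangle = 1$, which is the global maximum of $\langle \cdot,\cdot\rangle$ on pairs of unit vectors. Since $\dim(F_v) \geq 2$, the relative interior of the unit sphere of $F_v$ is a curve passing through $v$ along which $v$ varies, and any $v' \neq v$ on this curve (which remains in $Q$ since $v$ lies in the relative interior of $F_v$) satisfies $\langle u, v' \rangle < 1 = \langle u, v \rangle$. This contradicts $(u,v)$ being a local minimum.

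With $v \neq \pm u$ established and $(u,v)$ a critical pair (being a local minimum, it satisfies the KKT conditions), Theorem \ref{the:saddle} applies and yields that $(u,v)$ is a saddle point. By the definition of saddle point given just before Theorem \ref{the:saddle}, this means $(u,v)$ is not a local minimum, a contradiction.

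The only nontrivial step is the handling of the diagonal case $u = v$, but this is quick once one notes that the hypothesis ``$u$ not a generator'' forces $F_u$ to have dimension at least $2$, giving room inside $Q$ (or $P$) to perturb away from the point where $\langle u,v\rangle = 1$. Everything else is just an arithmetic check of the inequality $\dim(F_u) + \dim(F_v) > n$ in dimensions $n \in \{1,2,3\}$ — indeed $4 > n$ throughout — plus an invocation of Theorem \ref{the:saddle}.
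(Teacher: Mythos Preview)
Your proof is correct and follows essentially the same approach as the paper: both arguments handle the diagonal case $u=v$ separately (you via a perturbation within $F_v$, the paper by observing that a local minimum with $u=v$ forces both cones to be rays) and then invoke Theorem~\ref{the:saddle} together with the dimension count $\dim(F_u)+\dim(F_v)\ge 4>n$. The only difference is packaging---you argue by contradiction while the paper uses the contrapositive---but the substance is identical.
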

\begin{proof}
  If $u=v$ then $P,Q$ are both generated by $u$ since $(u,v)$ is a local minimum, so $u$ is a ray of both cones. We can thus suppose that $u\ne v$.  
  Since $(u,v)$ is not a saddle, then necessarily $\dim(F_u) + \dim(F_v)\le n\le 3$  by Theorem \ref{the:saddle},  where $F_u$, $F_v$ are the faces of $P, Q$ with the least dimension possible containing $u,v$. Since the faces have positive dimensions, one of the two must necessarily have dimension 1, and the respective vector $u$ or $v$ will thus be a generator for the respective cone.
\end{proof}

When the dimension is larger than 3, the result does not hold anymore. Here is a counterexample: take  the cones $P,Q\cu \f R^4$, where
$P$ is generated by $(1 ,-1, 0, 0)^\top$, $(1,1,0,0)^\top$, and $Q$ is generated by $(-1,0,1,-1)^\top$, $(-1,0,1,1)^\top$. The only solution pair is $u=(1,0,0,0)^\top$ and $v=\frac{1}{\sqrt{2}}(-1,0,1,0)^\top$
that are both not rays of the respective cones. In this case the faces where they lie are both of dimension $2$ and $2+2$ is not larger than $4$, so Theorem \ref{the:saddle} does not apply to the pair $(u,v)$. In fact, it is not necessarily a saddle point, but instead it is the global optimal solution.   \\

Corollary~\ref{cor:local_minimum_angles_have_generators_in_dim3} states that any solution $(u^*,v^*)$ of $\MA(P, Q)$ in dimension $3$ or less contains at least a generator. As a consequence, the minimum over the optimal values of $\MA(\f R_+g_i,Q)$ and $\MA(P,\f R_+ h_j)$ over all $i,j$ corresponds to the maximal angle of $\MA(P, Q)$.  The problem thus reduces to  solve 
\begin{equation}\label{eq:angles_cones_convex}
\MA(\f R_+z,Q):\;\;\min_{v} \langle z,v\rangle\quad\text{s.t.}\quad v\in Q\cap S_n,
\end{equation}
where $z$ is an unit vector, once for each $z=g_i$ generator of $P$, and then the specular problem $\MA(P,\f R_+z)$ for each $z=h_j$ generator of $Q$. 

More in general, we will need a way to solve the problem (see Subsection\,\ref{sec:AO})
\begin{equation}\label{eq:conic_sv_convex}
\SV(A,\f R_+z,Q):\;\;\min_{v} \langle z,Av\rangle\quad\text{s.t.}\quad v\in Q\cap S_n,
\end{equation}
for any unit norm vector $z$. 
Problem~\eqref{eq:conic_sv_convex}, and consequently also \eqref{eq:angles_cones_convex}, can be solved easily by well-known methods. In fact, when $\langle z,Av\rangle\ge 0$ for all $v\in Q$, then the solution is a generator of $Q$ as proven by 
Proposition~\ref{pro:nonnegative_case_SVcones} applied to the cones $( \f R_+z,Q)$. Otherwise, it can be reduced to the problem of projecting $-A^\top z$ onto the polyhedral convex cone $Q$, {see \cite{bauschke-projectingonto2018}, which is a convex quadratic problem.

\section{Algorithms for cone-constrained singular values} \label{sec:algorithms}

In this section, we propose 4 algorithms to tackle $\SV(A,P,Q)$ with $P,Q$ polyhedral cones generated by $G,H$ respectively: two exact algorithms that come with global optimality guarantees but may run for a long time (Section~\ref{sec:exactalgo}), and two heuristics that do not provide global optimality guarantees but run fast (Section~\ref{sec:heuristics}). 
Before doing so, we explain in Section~\ref{subs:preprocessing} how to leverage the results of the previous section, allowing us to identify cases that we know are solvable in polynomial time.

\subsection{Preprocessing: Checking the simple cases} \label{subs:preprocessing}

Let  us denote $(u^*,v^*)$ a solution pair of $\SV(A,P,Q)$ and $\lambda^*$ the associated optimal value.

\paragraph{Case 1:  $\lambda^* \geq 0$.}

If $\lambda^*$ is nonnegative, then  $ 0\le \lambda^*\le \min_{i,j}g_i^\top Ah_j$  because the columns of $G$ and $H$ are unit vectors. As a consequence $G^\top AH\ge 0$ and from Proposition \ref{pro:nonnegative_case_SVcones} we can conclude that $\lambda^*\ge 0$ if and only if  $G^\top AH\ge 0$. The same result shows that  $\lambda^* = \min_{i,j}g_i^\top Ah_j$, which can be easily detected in a preprocessing step. For this reason, from now on, we assume that $\lambda^*<0$.

\paragraph{Case 2: $\lambda^* =  - \|A\|$.}  

The case $\lambda^* =  - \|A\|$ can be checked using the singular value decomposition and solving convex quadratic programs.  
In fact, it is equivalent to say that $v^*$ is a right singular vector for $-A$ associated with the singular value $\|A\|$ and the left singular vector $u^*$. We take $U,V$ as the basis for the left and right singular vectors of $A$ relative to $\|A\|$, respectively, and realize that there must exist a vector $w^*\ne 0$ for which $v^* = Vw^*$ and $u^* = -Uw^*$. 
Since $u^*\in P$ and $v^*\in Q$, then  $u^* = G x^*$ and $v^* = Hy^*$ with $x^*,y^*$ nonnegative coefficients, that we can join in $ z^*:= \begin{pmatrix}
         y^{*\top}  &  x^{*\top} 
    \end{pmatrix}^\top\ge 0$.  Then for any  $W$ with orthonormal columns spanning the same subspace as $\begin{pmatrix}
     V\\ -U  \end{pmatrix}$, there exists a nonzero $w$ such that 
  $\begin{pmatrix}
     v^*\\ u^*  \end{pmatrix} = 
     W w = \begin{pmatrix}
       H& 0\\0 & G  \end{pmatrix} z^* \ne 0.
     $ So,
\[
\exists w\ne 0\ :\  \begin{pmatrix}
       H& 0\\0 & G  \end{pmatrix} z^* = Ww \iff  (I-WW^\top )\begin{pmatrix}
       H& 0\\0 & G  \end{pmatrix} z^* = 0, \quad \begin{pmatrix}
       H& 0\\0 & G  \end{pmatrix} z^* \ne 0
\]
We thus need to check if $0$ is the optimal value for the optimization problem
       \begin{equation}\label{eq:extreme_case}
              \min_z \left\|(I-WW^\top )\begin{pmatrix}
       H& 0\\0 & G  \end{pmatrix} z\right\| \, : \, z\ge 0,\, \begin{pmatrix}
       H& 0\\0 & G  \end{pmatrix} z\ne 0.
       \end{equation}
Notice that if the problem \begin{equation}\label{eq:extreme_case_rand+}
              \min_z \left\|(I-WW^\top )\begin{pmatrix}
       H& 0\\0 & G  \end{pmatrix} z\right\| \, : \, z\ge 0,\, p^\top \begin{pmatrix}
       H& 0\\0 & G  \end{pmatrix} z=\kappa, 
       \end{equation}
 has optimal value equal to zero for some vector $p$, and either for $\kappa=1$ or $\kappa=-1$, then also \eqref{eq:extreme_case} has the same  optimal solution $z^*$ with the same zero optimal value.
The converse, though, is not true, since the optimal $z^*$ solving \eqref{eq:extreme_case} might satisfy  $p^\top \begin{pmatrix}
       H& 0\\0 & G  \end{pmatrix} z^*=0$, but if $p$ is drawn randomly from any continuous distribution, then the probability for it to happen is zero.  On the other hand, if  $z^*$ solves \eqref{eq:extreme_case}, then there exists an index $i$ and a positive constant $\mu$  such that
 $e_i^\top \begin{pmatrix}
       H& 0\\0 & G  \end{pmatrix} \mu z^*=\pm 1$, so   \eqref{eq:extreme_case_rand+} has optimal value $0$ for $p=e_i$. 
       
   As a consequence, we can transform the problem to check if $\lambda^* =  - \|A\|$ into a set of easily solvable least squares problems with linear constraints. We propose two approaches to do so, a fast but randomized formulation, or a slower but deterministic one:

\begin{itemize}
    \item Given a randomly chosen vector $p$ drawn from any continuous distribution, and given $z^*$ solving \eqref{eq:extreme_case},  $p^\top \begin{pmatrix}
       H& 0\\0 & G  \end{pmatrix} z^*=0$ with probability zero. In particular, if $0$ is the optimal value of \eqref{eq:extreme_case}, then $0$ is also the optimal value of  \eqref{eq:extreme_case_rand+} (for either $\kappa=1$ or $\kappa=-1$) with probability 1.  If $0$ is not the optimal value of \eqref{eq:extreme_case}, then $0$ is also not the optimal value of either  \eqref{eq:extreme_case_rand+}. 
       
       \item  $0$ is the optimal value of \eqref{eq:extreme_case} if and only if  $0$ is also the optimal value of  \eqref{eq:extreme_case_rand+} for $p=e_i$ and some index $i$. As a consequence, 
we can look for a zero optimal value of  \eqref{eq:extreme_case_rand+}   for all possible $p=e_i$. This is computationally more expensive than the randomized approach above since we need to solve $n$ problems in the worst case, but it has the advantage of being deterministic.
 \end{itemize}

\subsection{Exact algorithms} \label{sec:exactalgo}

In this section, we propose two exact algorithms to tackle $\SV(A,P,Q)$: a brute-force active-set that enumerates all possible supports of the solutions $(u^*,v^*)$ (Section~\ref{sec:bruteforce}), and a formulation that can be solved via the non-convex quadratic solver of Gurobi (Section~\ref{sec:gurobi}).

\subsubsection{Brute-force active-set method for polyhedral cones} \label{sec:bruteforce}

In \cite{seeger-sossa2016angle1}, the authors used a brute-force active-set method to find exactly all the Pareto singular values in $\PSV(A)$. A similar method has been used to find all the critical angles between two cones in \cite{orlitzky2020maximal}. It is thus not surprising that the same reasoning can be used to solve exactly $\SV(A,P,Q)$, that is
\begin{equation}\label{eq:SVcones_generators}
    \min_{\scriptsize \begin{array}{l}u\in P,\,\Vert u\Vert=1,\\v\in Q,\,\Vert v\Vert=1\end{array}}\langle u,Av\rangle =
 \min_{\scriptsize \begin{array}{l}x\ge 0,\,\Vert Gx\Vert=1,\\y\ge 0,\,\Vert Hy\Vert=1\end{array}}\langle Gx,AHy\rangle.
\end{equation}
 The main idea is that, for an optimal pair $(x^*,y^*)$ we can restrict $x^*,y^*,G$ and $H$ to the non-active sets $\mathcal I:=\{i:x^*_i>0\}$ and $\mathcal J:=\{i:y^*_i>0\}$ obtaining $\ol G = G_{:,\mathcal{I}}, \ol H = H_{:,\mathcal{J}}$ and discover that the restricted optimal $\ol x^* = x^*_{\mathcal{I}}$ and $\ol y^* = y^*_{\mathcal J}$ solve 
 \[
 \min_{\scriptsize \begin{array}{l}\ol x> 0,\,\Vert \ol G\ol x\Vert=1,\\\ol y> 0,\,\Vert \ol H \ol 
 y\Vert=1\end{array}} \langle \ol G\ol x,A\ol H\ol y\rangle.
 \]
The pair $(\ol x^*,\ol y^*)$ will also be a local minimum for the same problem when we remove the positivity constraints $\ol x,\ol y> 0$, and can be computed using
  classical Lagrange multipliers
, see Theorem~\ref{th:critical_points_SVcones} below. As a consequence, one could test every index set $\mathcal I$ and $\mathcal J$ and solve the reduced problems to find $(x^*,y^*)$.
The following result, whose proof is similar to the one in \cite{seeger-sossa2016angle1}, shows some necessary conditions for a pair $(x^*,y^*)$ to be optimal. {We denote by $M^+$  the pseudoinverse of a full column rank matrix $M\in\R^{m\times n}$, that is, $M^+:=(M^\top M)^{-1}M^\top \in\R^{n\times m}$.}

\begin{theorem}\label{th:critical_points_SVcones}
  Let $A\in\R^{m\times n}$, and let $P=G\R^p_+$ and $Q=H\R^q_+$ be polyhedral cones as in \eqref{polyhedral}. Let $( u^*, v^*)\in \f R^m\times \f R^n$  be an optimal solution of problem $\SV(A,P,Q)$, 
with $\lambda^* := (u^*)^\top Av^*\ne \pm\|A\|$ and $ \lambda^*\le 0$. 
Then there exist $x^*,y^*\ge 0$ such that $u^* = Gx^*$, $v^*=Hy^*$ for which $\emptyset\ne\mathcal I:=\{i:x^*_i>0\} \cu [p]$, $\emptyset\ne\mathcal J:=\{i:y^*_i>0\} \cu [q]$ and that the following four properties hold: 
\begin{itemize}
    \item Property 1.   $|\mathcal I| + |\mathcal J|\le n + m-r$, where $r$ is the multiplicity of the singular value $\|A\|$ of $A$.
    
    \item Property 2. $\ol G:= G_{:,\mathcal I}$ and $\ol H:= H_{:,\mathcal J}$ are full column rank.
    
    \item Property 3. if $\ol x^* := x^*_{\mathcal I}$,  $\ol y^* := y^*_{\mathcal J}$,  $\ol z^*:= \begin{pmatrix}
        \ol y^{*\top}  & \ol x^{*\top} 
    \end{pmatrix}^\top$ and $M := \begin{pmatrix}
       0& \ol H^+ A^\top\ol G\\\ol G^+ A\ol H& 0  \end{pmatrix}$, then $\lambda^*$ is the least eigenvalue of $M$ and $\ol z^*$ belongs to its eigenspace. 
       
    \item Property 4. If $\wt G:= G_{:,\mathcal I^c}$ and $\wt H:= H_{:,\mathcal J^c}$ then 
    $\begin{pmatrix}
       0& \wt H^\top A^\top\ol G\\\wt G^\top A\ol H& 0  \end{pmatrix} \ol z^*
       $$-$$\lambda^* \begin{pmatrix}
        \wt H^\top \ol H & 0\\ 0& \wt G^\top \ol G   \end{pmatrix}\ol z^*$$\ge$$0
        $.
\end{itemize}
\end{theorem}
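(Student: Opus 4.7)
The plan is to extract a minimal conic representation of $(u^*, v^*)$ and then derive the four properties in order. Since $u^* \in P$ and $v^* \in Q$, pick $x^*, y^* \ge 0$ with $u^* = Gx^*$, $v^* = Hy^*$, and choose the representation that minimizes $|\{i : x_i^* > 0\}| + |\{j : y_j^* > 0\}|$; set $\mathcal I := \{i : x_i^* > 0\}$, $\mathcal J := \{j : y_j^* > 0\}$, so that $\ol x^* := x^*_{\mathcal I}$ and $\ol y^* := y^*_{\mathcal J}$ are strictly positive. Property 2 then follows directly from this minimality: if $\ol G \ol w = 0$ for some nonzero $\ol w$, then since $\ol x^* > 0$ there exists $t \ne 0$ making $\ol x^* + t\ol w$ nonnegative with strictly smaller support while still representing $u^* = \ol G(\ol x^* + t\ol w)$, a contradiction; the same argument gives the full column rank of $\ol H$.

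For Property 1, the smallest face of $P$ containing $u^*$ is precisely $\cone\{g_i : i \in \mathcal I\}$, because the face-splitting property applied to $u^* = \sum_{i \in \mathcal I} x^*_i g_i$ with $x^*_i > 0$ forces each $g_i$ with $i \in \mathcal I$ into any face containing $u^*$. Combined with Property 2, this gives $\dim F_{u^*} = |\mathcal I|$ and symmetrically $\dim F_{v^*} = |\mathcal J|$. Since $(u^*, v^*)$ is a global minimum (hence not a saddle) and $\lambda^* \ne \pm \|A\|$, Corollary~\ref{cor:saddle_ConeSV} yields $|\mathcal I| + |\mathcal J| \le m + n - r$.

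Property 3 is the most delicate step. Because $\ol x^*, \ol y^* > 0$, the nonnegativity constraints are locally inactive on the active coordinates, so the Lagrangian first-order conditions for the restricted problem with only the unit-norm constraints give
\[
\ol G^\top A \ol H \ol y^* = \alpha \, \ol G^\top \ol G \, \ol x^*, \qquad \ol H^\top A^\top \ol G \, \ol x^* = \beta \, \ol H^\top \ol H \, \ol y^*,
\]
for some multipliers $\alpha, \beta$. Taking inner products with $\ol x^*$ and $\ol y^*$ and using $\|u^*\| = \|v^*\| = 1$ forces $\alpha = \beta = \lambda^*$, and left-multiplying by $(\ol G^\top \ol G)^{-1}$ and $(\ol H^\top \ol H)^{-1}$ (well-defined by Property 2) produces the eigenvalue relation $M\ol z^* = \lambda^* \ol z^*$. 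The main obstacle is proving that $\lambda^*$ is the \emph{least} eigenvalue of $M$: the natural approach is by contradiction---if some $\mu < \lambda^*$ were an eigenvalue of $M$ with eigenvector $(\ol s, \ol t)$, then $(\ol x^* + \tau \ol t, \ol y^* + \tau \ol s)$ remains strictly positive (hence feasible after renormalization) for small $\tau$, and a careful second-order expansion of the normalized objective along this curve, exploiting the KKT identities $\ol G^\top(Av^* - \lambda^* u^*) = 0$ and $\ol H^\top(A^\top u^* - \lambda^* v^*) = 0$, should drive the value strictly below $\lambda^*$, contradicting the global optimality of $(u^*, v^*)$.

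Finally, Property 4 follows from the remaining KKT conditions: stationarity in $x_i$ gives $g_i^\top A v^* - \lambda^* g_i^\top u^* = \mu_i \ge 0$ with $\mu_i x_i^* = 0$, so the inequality survives on $\mathcal I^c$; stacking these inequalities together with the symmetric ones on $\mathcal J^c$ yields exactly the block expression in the statement.
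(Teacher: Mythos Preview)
Your treatment of Properties 2 and 4 matches the paper's, and your use of Corollary~\ref{cor:saddle_ConeSV} for Property 1 is also the paper's route. One small slip there: it is not generally true that $\dim F_{u^*}=|\mathcal I|$ (take $u^*$ in the interior of a square-based cone in $\mathbb R^3$, written as a positive combination of two opposite generators, so $|\mathcal I|=2$ but $\dim F_{u^*}=3$). You only need $\dim F_{u^*}\ge|\mathcal I|$, which does follow from Property 2, and that inequality is what feeds into the corollary.

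The real gap is the word ``should'' in Property 3. You have correctly derived $M\ol z^*=\lambda^*\ol z^*$, but the claim that a second-order expansion along $\tau\mapsto(\ol x^*+\tau\ol t,\ol y^*+\tau\ol s)$ forces the normalized objective below $\lambda^*$ is the heart of the matter and you have not carried it out. It does work: if $M(\ol s,\ol t)^\top=\mu(\ol s,\ol t)^\top$ with $\mu<\lambda^*\le 0$, the two block equations give $\langle \ol G\ol t,A\ol H\ol s\rangle=\mu\|\ol G\ol t\|^2=\mu\|\ol H\ol s\|^2$, and the second derivative at $\tau=0$ of the renormalized objective reduces to
\[
2(\mu-\lambda^*)\|\ol G\ol t\|^2+\lambda^*\big(\langle\ol G\ol x^*,\ol G\ol t\rangle-\langle\ol H\ol y^*,\ol H\ol s\rangle\big)^2,
\]
which is strictly negative since both summands are nonpositive and the first is strictly so. That contradicts local optimality. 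So your route is valid, just not executed as written.

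The paper organizes this step differently and more structurally: it writes $M=N^{-1}B$ with $N=\diag(\ol H^\top\ol H,\ol G^\top\ol G)$ and symmetric $B$, uses that eigenvectors of $M$ for eigenvalues $\mu$ with $|\mu|\ne|\lambda^*|$ are $N$-orthogonal to $\ol z^*$ (and that $(\ol s,-\ol t)$ is an eigenvector for $-\mu$), and concludes that such eigenvectors lie in the tangent space $\{\langle\ol G w_x,\ol G\ol x^*\rangle=\langle\ol H w_y,\ol H\ol y^*\rangle=0\}$. On that subspace the second-order necessary condition reads simply $\langle w,Bw\rangle\ge\lambda^*\langle w,Nw\rangle$, yielding $\mu\ge\lambda^*$ and $-\mu\ge\lambda^*$ at once. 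Your direct expansion avoids the orthogonality argument at the cost of carrying the cross term $\lambda^*(a-b)^2$, whose sign is harmless precisely because $\lambda^*\le 0$; the paper's approach makes that hypothesis do a bit less work.
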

\begin{proof}
Let us rewrite  $\SV(A,P,Q)$ in terms of the generators $G,H$ of the cones $P,Q$, respectively, as in \eqref{eq:SVcones_generators}. 
Necessary conditions for stationarity of \eqref{eq:SVcones_generators} are given by the following KKT conditions 
  \begin{equation}\label{eq:KKT_SVcones}
\begin{cases}
    0\le x\perp G^\top AH y - \lambda G^\top Gx \ge 0, \\
    0 \le y\perp H^\top A^\top G x - \lambda H^\top Hy \ge 0, \\
    \|Gx\| = \|Hy\| = 1. 
\end{cases}
\end{equation}
If now $( u^*, v^*)\in P\times Q$ is an optimal solution of problem $\SV(A,P,Q)$, then by Carathéodory's theorem, $u^*$ and $v^*$ are generated by some linearly independent subset of generators of their respective cones. In other words, there exist $x^*,y^*\ge 0$, such that $u^* = Gx^*$, $v^*=Hy^*$ and if $\mathcal I:=\{i:x^*_i>0\} \cu [p]$ and $\mathcal J:=\{i:y^*_i>0\} \cu [q]$  then $\ol G:= G_{:,\mathcal I}$ and $\ol H:= H_{:,\mathcal J}$ are full column rank. Property 2 is thus satisfied.
 Considering $\ol x^*, \ol y^*, \wt G, \wt H$ from the thesis, we can rewrite \eqref{eq:KKT_SVcones} as \begin{equation}\label{eq:restricted_KKT}
 \begin{cases}
 0 < \ol x^*,\quad  \ol G^\top A\ol H \ol y^* - \lambda \ol G^\top \ol G\ol x^* = 0,\\
 0 < \ol y^*,\quad  \ol H^\top A^\top \ol G \ol x^* - \lambda \ol H^\top \ol H\ol y^* = 0,\\
    \wt G^\top A\ol H \ol y^* - \lambda \wt G^\top \ol G\ol x^* \ge 0, \\
    \wt  H^\top A^\top\ol G \ol x^* - \lambda \wt H^\top \ol H\ol y^* \ge 0, \\
    \|\ol G\ol x^*\| = \|\ol H\ol y^*\| = 1.
\end{cases}
\end{equation}
Notice that $\lambda^*=  \langle u^*,Av^*\rangle = \langle \ol G\ol x^*,A\ol H\ol y^*\rangle= \lambda \|\ol G\ol x^*\|^2 = \lambda$, so Property 4 corresponds to the third and fourth inequalities of \eqref{eq:restricted_KKT}.  Since the $\ol G, \ol H$ have full column rank, then $\ol G^\top \ol G, \ol H^\top \ol H$ are invertible and thus  $\ol G^+A\ol H \ol y^* - \lambda^*\ol x^* = 0$, $\ol H^+ A^\top \ol G \ol x^* - \lambda^* \ol y^* = 0$ , that is, $M\ol z^* = \lambda^* \ol z^*$.
Notice that $\ol x^*, \ol y^*$ are strictly positive vectors, and they  solve the minimization problem
\[    \min_{\scriptsize \begin{array}{l}\ol x> 0,\,\Vert \ol G\ol x\Vert=1,\\\ol y> 0,\,\Vert \ol H\ol 
 y\Vert=1\end{array}}\langle \ol G\ol x,A\ol H\ol y\rangle\;=\; \lambda^*.  
\]
As a consequence, $(\ol x^*,\ol y^*)$ is a local minimum for the simpler problem $\min_{ \Vert \ol G\ol x\Vert=\Vert \ol H\ol y\Vert=1}\langle \ol G\ol x,A\ol H\ol y\rangle$. 
The vector $\ol z^*$ must thus satisfy the necessary second order conditions for the local minima, that are 
\begin{equation}
    \label{second_order_KKT}
     \begin{cases}
2\langle \ol G w_x,A\ol Hw_y\rangle\ge \lambda^* (\|\ol Gw_x\|^2 + \|\ol Hw_y\|^2) & \forall (w_y,w_x)\in \mathcal Z(\ol z^*), \\
 (M-\lambda^* I) \ol z^* = 0,\\
\Vert \ol G\ol x^*\Vert=\Vert \ol H\ol y^*\Vert=1,
\end{cases}
\end{equation}
where $ \mathcal Z(\ol z^*):= \left\{ (w_y,w_x)\in \mathbb R^{|\mathcal J|}\times \mathbb R^{|\mathcal I|} : \langle \ol G w_x,\ol G\ol x^*\rangle = \langle \ol H w_y,\ol H\ol y^*\rangle  =0 \right\}$.  Since $\ol G$ and $\ol H$ are full column rank, then we have the following similitude relations
\[
M = \begin{pmatrix}
       0& \ol H^+ A^\top\ol G\\\ol G^+ A\ol H& 0  \end{pmatrix} 
       = \begin{pmatrix}
        \ol H^\top \ol H & 0\\ 0& \ol G^\top \ol G   \end{pmatrix} ^{-1}
       \begin{pmatrix}
       0& \ol H^\top A^\top\ol G\\\ol G^\top A\ol H& 0  \end{pmatrix} =: N^{-1}B \sim N^{-1/2}BN^{-1/2}
\]
and the last matrix is in particular real and symmetric, so $M$ is diagonalizable and it has a real spectrum. Notice that since $B, N$ are symmetric, then $NM = B=B^\top = M^\top N$. The first condition of \eqref{second_order_KKT} can now be rewritten as
\begin{equation}
    \label{second_order_KKT2}
\langle w,Bw\rangle\ge \lambda^* \langle w,Nw\rangle  \qquad \forall w\in \mathcal Z(\ol z^*)
\end{equation}
 where $ \mathcal Z(\ol z^*):= \left\{ w\in \mathbb R^{|\mathcal J|+|\mathcal I|} : \langle   w,N \ol z^*\rangle = \langle \wt w,N \ol z^*\rangle  =0 \right\}$ and  $ w:= \begin{pmatrix}
         w_y^\top  & w_ x^\top 
    \end{pmatrix}^\top$, $ \wt w:= \begin{pmatrix}
         w_y^\top  & -w_ x^\top 
    \end{pmatrix}^\top$.
Suppose now $\mu$ is any real eigenvalue of $M$ such that $|\mu|\ne |\lambda^*|$, with an eigenvector $s:= \begin{pmatrix}
        s_y^\top  & s_ x^\top 
    \end{pmatrix}^\top$ normalized as $ \langle  s,N s\rangle  = 1$.  Notice that $M\wt s = -\mu \wt s$ and that from $M\ol z^* = \lambda^* \ol z^*$, 
    \begin{align*}
        \mu \langle  s,N \ol z^*\rangle
= \langle  Ms,N \ol z^*\rangle
 = \langle  N s,M \ol z^*\rangle 
 = \lambda^*\langle  N s, \ol z^*\rangle \ &
\implies \langle  Ns, \ol z^*\rangle = 0,\\
-  \mu \langle  \wt s,N \ol z^*\rangle
= \langle  M\wt s,N \ol z^*\rangle
 = \langle  N \wt s,M \ol z^*\rangle 
 = \lambda^*\langle  N \wt s, \ol z^*\rangle \ &
\implies \langle  N\wt s, \ol z^*\rangle = 0,
    \end{align*}
which in turn implies that $s\in \mathcal Z(\ol z^*)$. 
Finally, from  \eqref{second_order_KKT2}, 
\[
               \mu =     \mu\langle  s,Ns\rangle =  \langle  Ms,N s\rangle= \langle  s,B s\rangle\ge \lambda^*\langle  s,N s\rangle = \lambda^*,
\]
so  $\mu \ge \lambda^*$ and repeating the same reasoning with $-\mu$, one gets $-\mu\ge \lambda^*$. Since by hypothesis  $\lambda^*\le 0$, then $\lambda^*$ is the least eigenvalue of $M$ and Property 3 is satisfied. 

Since $\ol G$  is full rank, then  $u^* =  \ol G \ol x^*$ is in the interior part of a face of at least dimension $|\mathcal I|$ in $P$ and analogously $v^* =  \ol H \ol y^*$ is in the interior part of a face of at least dimension $|\mathcal J|$ in $Q$. Since $\lambda^*\ne \pm \|A\|$, then by Corollary \ref{cor:saddle_ConeSV} we can impose $|\mathcal I| + |\mathcal J|\le n + m-r$, otherwise the point $(u^*,v^*)$ will be a saddle point, and thus not a global minimum. Property 1 is thus also proved.
\end{proof}

Properties 1-4 in Theorem \ref{th:critical_points_SVcones} are necessary for $(x^*,y^*)$ to be an optimal solution, but they are not in general sufficient. In fact, these properties are common to many critical pairs of $\SV(A,P,Q)$, and in general they can only guarantee that the computed value $\lambda$ is an upper bound of the optimal $\lambda^*$. This shows the need to test every pair of subsets of indices $(\mathcal I,\mathcal J)$ to be sure to find the optimal solution.\\

A problem that has never been addressed before is how to check whether there exists a nonnegative and nonzero vector $\ol z^*$ belonging to the $\lambda$-eigenspace of $M$, that is, how to check whether 
Property~3 of Theorem~\ref{th:critical_points_SVcones} is satisfied when the eigenspace has dimension larger than one. 
 In fact, if the eigenspace has dimension one, then it is enough to take a single eigenvector $w$ of $M$
 and check if $w$ is either nonnegative or nonpositive, and in that case $\ol z^* = w$ or $\ol z^* = -w$, respectively.
 
 When instead the eigenspace has dimension larger than 1, then we need to solve a least square problem with linear constraints as described by the following proposition.

\begin{proposition}
\label{pro:sufficient_conditions_feasibility}
Suppose that $\emptyset\ne \mathcal I \cu[p]$, $\emptyset\ne \mathcal J \cu[q]$ with $|\mathcal I|\le |\mathcal J|$. Let $A\in \f R^{m\times n}$, $G\in \f R^{m\times p}$, $H\in \f R^{n\times q}$ such that the columns of $G$ and $H$ have unit norm and $\ol G:= G_{:,\mathcal I}$ and $\ol H:= H_{:,\mathcal J}$ are full column rank. 
Let  $A_x:=\ol H^+ A^\top\ol G$,  $A_y:=\ol G^+ A\ol H$ and let $U_G$ and $U_H$ be {matrices whose columns are} orthonormal basis for the images
of  $\ol G$ and $\ol H$ respectively. 
Suppose moreover that $U$ is a basis of the $\rho(A_yA_x)$-eigenspace of $A_yA_x$ and let $V$ be an orthonormal basis for the image of  $ \begin{pmatrix}
      A_xU/\lambda\\ U  \end{pmatrix}$. Then
\begin{enumerate}
    \item  the spectral radius of $A_yA_x$ is $\rho(A_yA_x) = \|U_G^\top AU_H\|^2$,
    \item  $\lambda:= - \sqrt{\rho(A_yA_x)}$ is the least eigenvalue of $ M:= \begin{pmatrix}
       0& A_x\\A_y& 0  \end{pmatrix}$,
       \item  $
     \min_{ z\ge 0,\,\, e^\top z = 1} \left\| (VV^\top - I)  z\right\| = 0
     \iff 
       \exists\, \ol z\in \f R_+^{q+p}, \, \ol z\ne 0: 
 M
       \ol z = \lambda \ol z$,
       
       \item if $\ol z\ne 0$ solves $VV^\top \ol z = \ol z\ge 0$, then given the decomposition $\ol z^\top = [\ol y^\top \,\,\ol x^\top]$ with $\ol x\in \f R^{|\mathcal I|}$, $\ol y\in \f R^{|\mathcal J|}$, and 
 $u = \ol G \ol x/\|\ol G\ol x\|$, $v=\ol H\ol y/\|\ol H\ol y\|$,  we have that $(u,v)$ is a feasible point for problem $\SV(A,P,Q)$ with $\lambda = u^\top Av$. 
 \end{enumerate}
\end{proposition}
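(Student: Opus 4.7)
The plan is to handle the four claims in order, since each builds on the structure of the $\lambda$-eigenspace of $M$ established in the previous ones. Throughout, I would use the QR-type factorization $\ol G = U_G R_G$ with $R_G$ square and invertible (possible since $\ol G$ has full column rank and $U_G$ is an orthonormal basis for its image), which yields $\ol G^+ = R_G^{-1} U_G^\top$ and $\ol G \ol G^+ = U_G U_G^\top$, and analogously for $\ol H$. Substituting into $A_y A_x = \ol G^+ A \ol H \ol H^+ A^\top \ol G$ produces $A_y A_x = R_G^{-1}(U_G^\top A U_H)(U_G^\top A U_H)^\top R_G$, which is similar to the symmetric positive semidefinite matrix $(U_G^\top A U_H)(U_G^\top A U_H)^\top$. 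Since similarity preserves the spectrum, $\rho(A_y A_x)=\|U_G^\top A U_H\|^2$, proving Part~1. For Part~2, the standard block computation gives that $M(w_y,w_x)^\top=\mu(w_y,w_x)^\top$ is equivalent to $A_x w_x=\mu w_y$ and $A_y w_y=\mu w_x$, which iterates to $A_y A_x w_x=\mu^2 w_x$. Combined with Part~1, the spectrum of $M$ consists of symmetric pairs $\pm\sqrt{\nu}$ for $\nu\in\sigma(A_y A_x)\subseteq\R_+$, so the least eigenvalue is $-\sqrt{\rho(A_y A_x)}=\lambda$.

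For Part~3, the key observation is that the image of $V$ is exactly the $\lambda$-eigenspace of $M$. The lifting used in Part~2 shows that each $w_x$ in the $\rho(A_y A_x)$-eigenspace of $A_y A_x$ lifts uniquely to the $M$-eigenvector $(A_x w_x/\lambda,\, w_x)^\top$, and all $M$-eigenvectors associated to $\lambda$ arise this way. Hence the $\lambda$-eigenspace is parametrized by the range of $\begin{pmatrix} A_x U/\lambda \\ U \end{pmatrix}$, whose orthonormal basis is $V$ by construction. Therefore $VV^\top$ is the orthogonal projector onto this eigenspace, and $(VV^\top-I)z=0$ is equivalent to $Mz=\lambda z$. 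The equivalence in the statement then reduces to the elementary observation that any nonzero $\bar z\ge 0$ can be rescaled so that $e^\top \bar z=1$ (since $e^\top \bar z>0$), and conversely any $z\ge 0$ with $e^\top z=1$ is automatically nonzero.

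Part~4 follows by direct computation. Writing $\bar z=(\bar y;\bar x)$, the eigenvector relations give $A_y \bar y=\lambda \bar x$ and $A_x \bar x=\lambda \bar y$. Using the identity $\ol G^\top \ol G\,A_y=\ol G^\top A \ol H$, which is immediate from the definition of the pseudoinverse, one obtains $\lambda\,\|\ol G \bar x\|^2 = \bar x^\top \ol G^\top \ol G\, A_y \bar y = \bar x^\top \ol G^\top A \ol H \bar y$, and the symmetric computation on the $\ol H$ side yields $\lambda\,\|\ol H \bar y\|^2$ equal to the same inner product. Thus $\|\ol G \bar x\|=\|\ol H \bar y\|$ whenever $\lambda\neq 0$, so after normalization $u^\top A v=\lambda$ follows directly. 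Feasibility of $(u,v)$ in $P\times Q$ with unit norms is automatic since $\bar x,\bar y\ge 0$ and $\bar G, \bar H$ are columns of $G,H$.

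I expect the main technical point to be the identification in Part~3 of the image of $V$ with the $\lambda$-eigenspace of $M$, because one must check that the map $w_x\mapsto (A_x w_x/\lambda, w_x)$ is both injective and surjective onto this eigenspace; once that is in place, the remaining arguments are short. A minor care point is the degenerate case $\lambda=0$, which corresponds to $U_G^\top A U_H=0$ and in particular makes the lifting formula $A_x w_x/\lambda$ ill-defined, but in that case the identity $u^\top A v=0=\lambda$ reduces to $\bar x^\top \ol G^\top A \ol H \bar y=0$, which holds for any $\bar z\ge 0$ in the relevant eigenspace.
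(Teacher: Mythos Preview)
Your proof is correct and follows essentially the same route as the paper: a similarity argument for Part~1 (you use a QR factorization $\ol G=U_GR_G$ where the paper uses the square root $(\ol G^\top\ol G)^{-1/2}$, but both yield the same similarity to $(U_G^\top AU_H)(U_G^\top AU_H)^\top$), the standard block-eigenvalue correspondence for Part~2, identification of $\mathrm{Im}(V)$ with the $\lambda$-eigenspace of $M$ for Part~3, and the pseudoinverse identity $\ol G^\top\ol G\,A_y=\ol G^\top A\ol H$ for Part~4. The only small omission is that in Part~4 you should note explicitly that $\bar x$ and $\bar y$ are both nonzero (if $\bar x=0$ then $\lambda\bar y=A_x\bar x=0$ forces $\bar y=0$, contradicting $\bar z\ne 0$), so that the normalizations defining $u,v$ make sense.
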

 \begin{proof}
 First of all, $A_yA_x$ is similar to $B B^\top $ where 
 \[
 B:= (\ol G^\top\ol G)^{-1/2}\ol G^\top A\ol H (\ol H^\top\ol H)^{-1/2},
 \]
 so $A_yA_x$ is diagonalizable and all its eigenvalues are nonnegative. In particular, $ \rho(A_yA_x) $ is an eigenvalue of $A_yA_x$. Moreover, $\ol H (\ol H^\top\ol H)^{-1/2} = U_HV_H$ where $V_H$ is a square orthogonal matrix, and similarly $\ol G (\ol G^\top\ol G)^{-1/2} = U_GV_G$, so  
      $\rho(A_yA_x) =\| B\|^2 = \|U_G^\top AU_H\|^2$.
If $ M w = \mu w$, where $w = [w_y^\top \,\,w_x^\top]^\top $ and $\mu$ are nonzero, then $A_yA_x w_x = \mu A_y w_y = \mu^2 w_x$  and $w_x$ is nonzero, so $\mu^2$ is an eigenvalue of $A_yA_x$. Vice versa, if $A_yA_x w_x = \mu^2 w_x$ with $w_x$ and $\mu$ nonzero, then we can call
$w_y := A_xw_x/\mu $ and obtain that $A_yw_y = \mu w_x$, so $w = [w_y^\top \,\,w_x^\top]^\top \implies M w= \mu w$. As a consequence,
\[
\{\pm \sqrt{\lambda}:\lambda\in\Lambda(A_yA_x)\}\;=\;\Lambda\left(M\right)\setminus\{0\},  
\]
where $\Lambda(C)$ denotes the spectrum of a square matrix $C$. In particular, $-\sqrt{\rho(A_yA_x)}$ is the least eigenvalue of $M$. 
     
Since $VV^\top$ is the orthogonal projection on the space  $E:= \text{Span}(V)$ that coincides with the space spanned by $ \begin{pmatrix}
       A_xU/\lambda\\ U  \end{pmatrix}$, 
       checking whether $\min_{ z\ge 0,\,\, e^\top z = 1} \left\| (VV^\top - I)  z\right\|$ is equal to zero is equivalent to checking whether there exists a nonzero and nonnegative vector $\ol z = [\ol y^\top \,\,\ol x^\top]^\top $ in $E$, that is,  there exist $\ol x,\ol y\ge 0$ and a nonzero $w$ such that  $\ol x = Uw$ and $\ol y = A_xUw/\lambda = A_x \ol x/\lambda$. Notice that $A_y \ol y = A_yA_x \ol x/\lambda = \lambda \ol x$ since $U$ is  a basis of the $\lambda^2$-eigenspace of $A_yA_x$, so we obtain  $M       \ol z = \lambda \ol z$.
Viceversa, if $ M       \ol z = \lambda \ol z$ and $\ol z = [\ol y^\top \,\,\ol x^\top]^\top \ge 0$ is nonzero, then $A_yA_x \ol x = \lambda A_y \ol y = \lambda^2 \ol x$ so $\ol x$ is a $\lambda^2$ eigenvector of $A_yA_x$ and there exists a nonzero $w$ such that $\ol x = Uw$ and $\ol y = A_x \ol x/\lambda=A_xUw/\lambda$.\\

Given such a nonzero vector $\ol z\ge 0$, notice that both $\ol x,\ol y\ge 0$ are nonzero since $ \ol y = A_x \ol x/\lambda = 0$ if and only if 
$\ol x = A_y\ol y/\lambda =0$. This means in particular that $\|\ol G \ol x\|>0$ since $\ol G$ is full column 
rank. Moreover, one can verify that $\|\ol G \ol x\|^2=\| \ol H\ol y\|^2$, so
\begin{align*}
  \langle u,Av\rangle  &= \frac{\langle \ol G\ol x,A\ol H\ol y\rangle  }{ \|\ol G \ol x\| \|\ol H\ol y\|} =
   \frac{\langle \ol G \ol x,\ol G \ol G^+ A\ol H\ol y\rangle }{ \|\ol G \ol x\|^2} =
   \frac{\langle \ol G \ol x,\ol GA_y\ol y\rangle }{ \|\ol G \ol x\|^2} = 
   \lambda \frac{ \langle \ol G \ol x,\ol G \ol x\rangle}{ \|\ol G \ol x\|^2} = \lambda.
\end{align*}
\end{proof}

Notice that $ \min_{ z\ge 0,\,\, e^\top z = 1} \left\| (VV^\top - I)  z\right\| = 0$ is a classical least square problem with linear constraints, that can be solved in polynomial time by classical algorithms.

Using the two results combined, we can now come up with a method to solve $\SV(A,P,Q)$, summarized in 
 Algorithm~\ref{alg:active_set}. The method coincides with Algorithm 3 in \cite{orlitzky2020maximal} when generalized to $\SV(A,P,Q)$, with few differences. 
 
Algorithm \ref{alg:active_set} is intended to be used after the preprocessing described in Section \ref{subs:preprocessing}, ensuring that the problem does not have trivial solutions, and in particular that the least singular value of  $\SV(A,P,Q)$ is neither nonnegative nor equal to $-\|A\|$. The explanation for the steps of the algorithm is as follows:
\begin{itemize}
    \item In Step 1, we take $\lambda = \langle g_i,Ah_j\rangle  = {\min_{k,\ell}{(G^\top AH)_{k,\ell}}}$ as an upper bound for the optimal solution, with feasible solution given by $(u,v) = (g_i,h_j)$.
    \item In Step 2, we collect in $\mc I$ all the pairs of index sets $(\mathcal I,\mathcal J )\cu [p]\times[q]$ of nonzero entries for $(x,y)$ satisfying Property 1 and 2 of Theorem \ref{th:critical_points_SVcones}. We also impose $2<|\mathcal I|+|\mathcal J|$ since all the cases for which $1=|\mathcal I|=|\mathcal J|$ coincide with $(u,v) = (g_i,h_j)$ that have already been considered in Step 1.
    \item Once a pair  $(\mathcal I,\mathcal J )$ is fixed, the smallest eigenvalue of the matrix $M$ squared is computed in Steps 4-6 as the opposite of the spectral radius $\rho$ of $A_xA_y$ or $A_yA_x$, depending on which one of the two matrices has smaller size, as shown by point 2 of Proposition \ref{pro:sufficient_conditions_feasibility}. 
    \item In Steps 6-8, we discard the pair  $(\mathcal I,\mathcal J )$ if $\rho\le \lambda^2$ because by Property 3 of  Theorem \ref{th:critical_points_SVcones}, the optimal solution cannot have the least eigenvalue of the matrix $M$ greater than $-\lambda$. Moreover, it is possible to prove that in this case all feasible solutions $(x,y)$ with sparsity pattern  $(\mathcal I,\mathcal J )$ cannot improve the current upper bound $\lambda$.
    \item If  $\rho> \lambda^2$ then we check if $M$ admits a nonnegative eigenvector $\ol z$ relative to its least eigenvalue $\mu=-\sqrt\rho$. If the eigenspace has dimension 1, it is enough to check in Steps 11-12 if $W$ is nonnegative. Otherwise, in Steps 14-15 we solve the problem in point 3 of Proposition \ref{pro:sufficient_conditions_feasibility}. In case the eigenvector exists, by Proposition \ref{pro:sufficient_conditions_feasibility} the eigenvalue $\mu$ is a better upper bound than $\lambda$ to the optimal solution and we can compute a feasible solution $(u,v)$ such that $\mu = \langle u, Av\rangle$. 
\end{itemize}
By Theorem \ref{th:critical_points_SVcones}, for the pair $(\mathcal I,\mathcal J )$ relative to the optimal solution, the least eigenvalue of $M$ coincides with the optimal $\lambda^*$ of the problem  $\SV(A,P,Q)$  and its eigenspace contains a nonnegative eigenvector $\ol z$. As a consequence, Algorithm \ref{alg:active_set} is bound to output the correct solution.

\begin{algorithm}[h!]
\caption{Brute-Force Active-Set method (BFAS) to solve $\SV(A,P,Q)$} \label{alg:active_set}
\begin{algorithmic}[1]
\REQUIRE Matrix $A \in \R^{m\times n}$,  matrices  $G \in \mathbb{R}^{m \times p}$,  $H \in \mathbb{R}^{n \times q}$ with unit columns generating the cones $P \subseteq \mathbb{R}^{m}$ and $Q \subseteq \mathbb{R}^{n}$. Requires also $G^\top A H\not \ge 0$ and $- \|A\|$ is not the optimal value of $\SV(A,P,Q)$.
\ENSURE An exact solution $\lambda = \min \langle u,Av\rangle$   such that $\|u\|=\|v\| = 1$, $u\in P$, $v\in Q$. 
\medskip 
\State $\lambda = \langle g_i,Ah_j\rangle  = {\min_{k,\ell}{(G^\top AH)_{k,\ell}}}$, $u = g_i$, $v = h_j$, $r = \text{Null}(A^\top A - \|A\|^2 I_n) $.
\State $ \mc I := \{ {(\mathcal I,\mathcal J )\cu [p]\times[q]\,:\,} 2<|\mathcal I|+|\mathcal J|\le m+n-r,\,   \ol G:= G_{:,\mathcal I} \text{ and } \ol H:= H_{:,\mathcal J} \text{ full column rank} \}$
\FOR{$(\mathcal I, \mathcal J)\in  \mc I$,}

 \State $ A_y = \ol G^+ A^\top \ol H$, $A_x = \ol H^+A \ol G$. 
\State $A_\lambda = A_yA_x $, $\wt A_\lambda = A_x$ (or  $A_\lambda = A_xA_y$, $\wt A_\lambda = A_y$ if $|\mathcal I| > |\mathcal J| $).
\IF {$\rho(A_\lambda)\le \lambda^2$}
\State Skip to the next $(\mathcal I, \mathcal J)\in  \mc I$.
\ENDIF
\State Compute the right eigenspace $U$ relative to the eigenvalue $\rho(A_\lambda)$ of $A_\lambda$. 
\State  $\mu = -\sqrt{\rho(A_\lambda)}$,  $W = \begin{pmatrix}
      \wt A_\lambda U/\mu\\ U \end{pmatrix}$.
\IF {$\rho(A_\lambda)$ has multiplicity $1$ and $W$ is nonnegative or nonpositive}
\State  $\lambda = \mu$,  $|W| = [y^\top \,\,x^\top]^\top$ (or $|W| = [x^\top \,\,y^\top]^\top$ if $|\mathcal I| > |\mathcal J| $), $u= \ol Gx/\|\ol G x\|$, $v= \ol Hy/\|\ol Hy\|$.
\ELSIF{ $\rho(A_\lambda)$ has multiplicity $>1$}
\State Compute  an orthonormal basis $V$ for the image of $W$.
\State If $ (VV^\top - I)  z = 0$, $z\ge 0$, $e^\top z = 1$ admits a solution $\ol z$, then
\State $\lambda = \mu$, $\ol  z = [\ol  y^\top \,\,\ol x^\top]^\top$ (or $\ol z = [\ol x^\top \,\,\ol y^\top]^\top $if $|\mathcal I| > |\mathcal J| $), $u= \ol G\ol x/\|\ol G \ol x\|$, $v= \ol H\ol y/\|\ol H\ol y\|$.
\ENDIF

\ENDFOR 
\end{algorithmic}
\end{algorithm}

\subsubsection{Non-convex quadratic solver for polyhedral cones with Gurobi} \label{sec:gurobi} 

For polyhedral cones $P$ and $Q$, the sign of the optimal objective value, $\lambda^*$, of problem $\SV(A,P,Q)$ can be checked; see Section~\ref{sec:simplecases}. 
When $\lambda^* < 0$, the challenging case, $\SV(A,P,Q)$ can be reformulated as follows:  
\begin{equation} \label{eq:SVconesineq}
\lambda^* \; = \; \min_{u \in P, v \in Q} \langle u,Av\rangle
\quad \text{ such that } \quad 
\|u\| \leq 1 \text{ and } \|v\| \leq 1 . 
\end{equation} 
In fact, the norm constraints will be active at optimality since $\langle u,Av\rangle$ is linear in $u$ and $v$ separately, and $\lambda^* < 0$. 
This reformulation~\eqref{eq:SVconesineq} has a convex feasible set over which we optimize a non-convex quadratic objective.  
When $P$ and $Q$ are polyhedral cones (defined either with inequalities or via generators), the global non-convex optimization software Gurobi\footnote{\url{https://www.gurobi.com/solutions/gurobi-optimizer/}} can solve such problems. Let us briefly explain how it works. Gurobi relies on so-called McCormick relaxations~\cite{mccormick1976computability}. 
It introduces an auxiliary variable for each product of variables (the non-convex terms in the objective): 
Let $\Omega = \{(i,j) \ | \ A(i,j) > 0 \}$ and $d = |\Omega|$. Let us denote $i_k$ and $j_k$ the indices of the entry of $A$ corresponding to the $k$-th entry of $\Omega$, that is, 
$\Omega = \{ (i_k,j_k) \ | \ k=1,2,\dots,d\}$. 
Let us also denote the set $\mathcal{B} = \{(u,v,w) \ | \ w = uv, -1 \leq u,v \leq 1\}$. 
The problem~\eqref{eq:SVconesineq} is equivalent to 
\begin{align*} \label{eq:origpbl}
\lambda^* = \min_{u \in P, v \in Q, w \in \mathbb{R}^{d}} & \sum_{k=1}^d A(i_k,j_k) w_{k}  
 \text{ such that } 
(u_{i_k} , v_{j_k}, w_{k}) \in \mathcal{B} \text{ for } (i_k,j_k) \in \Omega, 
\|u\| \leq 1 \text{ and } \|v\| \leq 1 . 
\end{align*} 
The constraints $w_{k} = u_{i_k} v_{j_k}$ makes the problem non-convex. However, it can be relaxed using the McCormick envelope, which uses the smallest convex set containing $\mathcal{B}$: 
Constraints of the form $w = u v$ are relaxed to  
\begin{equation*}
 w  \leq -u +  v +1 ,  \; 
 w  \leq   u  -  v +1 ,  \;
 w  \geq -  u - v  -1 , \; 
 w  \geq  u       +  v -   1 . 
\end{equation*} 
The curve $(u,v,uv)$ is sandwiched between four hyperplanes, each of them containing $3$ vertices of the hypercube $[-1,1]^3$. 

The McCormick relaxation optimizes over this (linear) convex envelope, and hence obtains a lower bound for $\lambda^*$. From this solution, Gurobi will subdivide the feasible set into smaller pieces, using a branch and bound approach. 
For each of the $d$ non-zero entries of $A$, we might have to branch up to a desired precision, and the worst-case complexity of this procedure is in 
$O\left( \left(\frac{1}{\epsilon}\right)^{d} \text{poly}(m,n) \right)$ where $\epsilon$ is the desired accuracy.  In practice, Gurobi can avoid exploring a large part of the domain, because of the branch-and-bound approach. Note that when $A$ has a few non-zero entries, there are fewer non-convex terms and hence Gurobi is more likely to solve such problems faster. 

Recall that in case of polyhedral cones, we can rewrite problem $\SV(A,P,Q)$
in terms of the generators of the cones, as in
 \begin{equation}\label{eq:SVcones_generators2}
    \min_{\scriptsize \begin{array}{l}x\ge 0,\,\Vert Gx\Vert \leq 1,\\y\ge 0,\,\Vert Hy\Vert \leq 1\end{array}}  \langle Gx,AHy\rangle.
\end{equation}
This is again a non-convex quadratic problem that can be solved with Gurobi, but from the experiments we consistently observe that solving \eqref{eq:SVconesineq} in $(u,v)$ is faster than solving \eqref{eq:SVcones_generators2} in $(x,y)$. This behavior depends on the fact that $G^\top A H$ is usually less sparse than $A$, so the McCormick relaxation introduces more variables and the computational cost rises considerably.

\subsection{Heuristic algorithms}   \label{sec:heuristics}

In this section, we propose two heuristic algorithms (that is, algorithms that come with no global optimality guarantees) to tackle $\SV(A,P,Q)$: one based on alternating optimization (Section~\ref{sec:AO}), and one based on fractional programming (Section~\ref{sec:fracprog}).

{We point out that a method based on alternating optimization has already been explored in \cite[Section\,6]{seeger2023PSV} for the problem $\PSV(A)$ (with $A$ having at least one negative entry) by exploiting the equivalence with the best nonnegative rank-one approximation problem~\eqref{th:rank1NF}. }

\subsubsection{Alternating projection with extrapolation} \label{sec:AO}

Given $A$, $P$ and $Q$, we want to solve 
\[
\min_{u \in P, v \in Q} \; \langle u,Av\rangle
\quad \text{ such that } \quad \|u\| = \|v\| = 1. 
\]
A standard, simple and often effective optimization strategy is block coordinate descent. Recall that one can easily check whether the optimal objective function value is nonnegative; see Proposition~\ref{pro:nonnegative_case_SVcones}. If it is negative, then one can relax the constraints $\|u\| = \|v \| = 1$ to $\|u\|$ $\leq 1$, $\|v \| \leq 1$ to make the feasible set convex. 
In this case, we are facing a bi-convex problem, that is, the problem is convex in $u$ when $v$ is fixed, and vice versa. Hence, it makes sense to use two blocks of variables: $u$ and $v$, also known as alternating optimization (AO). Given an initial $v$, it simply alternates between the optimal update of $u$ and $v$, which are convex optimization problems. 

In our case, the subproblems are convex and the feasible sets are compact, hence AO is guaranteed to have a subsequence converging to a critical pair~\cite{grippo2000convergence}.

\paragraph{Subproblems in $u$ and $v$:}

The subproblem in $u$ is expressed as
\[u  = \argmin_{x \in P} \langle x,Av\rangle
 \; \text{ such that } \; \|x\| \,  \leq  \, 1,\]
and the subproblem in $v$ is analogous. 
This is a classical optimization problem as discussed in Section~\ref{sec:simple_cases_generators}. 
In the case of polyhedral cones,  we use Gurobi to solve the two subproblems. 

In some special cases, the subproblems in $u$ and $v$ can be solved in closed form. 
In fact, in the case  $P = \mathbb{R}^{m}_+$, given the vector $c = Av$, we need to solve 
$\min_{x \geq 0, \|x\| = 1} \langle x,c\rangle$. It can be easily checked that the optimal solution is $x = \frac{\min(c,0)}{\|\min(c,0)\|}$ 
if $c \ngeq 0$, otherwise $x$ has a single non-zero entry equal to one at a position $i \in \argmin_j c_j$. 

When $P$ and $Q$ are not polyhedral, the same algorithm can still be applied, as long as we can explicitly express the projection maps on both cones, or if we have a convergent method to solve the convex subproblems in $u,v$.

\paragraph{Extrapolation} AO can sometimes be relatively slow to converge. To accelerate convergence, we use extrapolation. After each update of $u$ (and similarly for $v$), we define the extrapolated point $u_e = u + \beta (u - u_p)$ where $u_p$ is the previous iterate, and $\beta \in [0,1]$ is a parameter. Note that the extrapolated point does not need to be feasible. 
To guarantee convergence, we restart the scheme when the objective increases, and also decrease $\beta$ by a factor $\eta$ (we will use $\eta = 2$). When the objective decreases, we slightly increase $\beta$ by a factor $\gamma$ to reinforce the extrapolation effect (we will use $\gamma = 1.05$). 
This is a similar strategy as used in~\cite{ang2019accelerating}. 

Algorithm~\ref{alg:eao} summarizes our proposed extrapolated AO (E-AO). \\
\begin{algorithm}[h!]
\caption{Extrapolated Alternating Optimization (E-AO) to solve $\SV(A,P,Q)$} \label{alg:eao}
\begin{algorithmic}[1]

\REQUIRE Matrix $A \in \R^{m\times n}$,  {poyhedral cones} $P \subseteq \mathbb{R}^{m}$ and $Q \subseteq \mathbb{R}^{n}$, initial point 
$v_0 \in Q$ with $\|v_0\| = 1$, maximum number of iteration $K$, stopping criterion $\delta \ll 1$, extrapolation parameters $\beta \in (0,1]$,  $\eta > \gamma > 1$. (Default: $K=500$, $\delta = 10^{-6}$, $\beta = 0.5, \eta = 2, \gamma = 1.05$.)

\ENSURE An approximate solution to $\min_{u \in P, v \in Q} \langle u,Av\rangle \text{ such that } \|u\|,\|v\|$ $ \leq $ $1$. 

\medskip 

\State $u = 0$, $v = 0$, $v_e = v_0$, $k = 1$, $\beta_p = \beta$, $u_p = u$, $v_p = v$, rs $=0$. 

\WHILE{
$k \leq K$ 
and 
$\Big( \text{rs} = 1 \text{ or } \|u-u_p\| \geq \delta 
\text{ or }  \|v-v_p\| \geq \delta $ 
or 
$\left(k \leq 3 \text{ or } e_{k-2}-e_{k-1} \geq \delta e_{k-2}\right) \Big)$
}

  \State $u_p = u$. \emph{\% Keep previous iterate in memory} 
  
 \State  $u \; = \; \argmin_{x \in P} \langle x,Av_e\rangle 
 \; \text{ such that } \; \|x\|$ $\leq $ $1.$ 

\State $u_e = u + \beta (u-u_p)$. \emph{\% Extrapolated point}

  \State $v_p = v$. \emph{\% Keep previous iterate in memory}  
  
 \State  $v \; = \;\argmin_{y \in Q} \langle u_e,Ay\rangle 
 \; \text{ such that } \; \|y\|$ $ \leq $ $1.$

\State $v_e = v + \beta (v-v_p).$ \emph{\% Extrapolated point}

 \State  \textbf{\emph{\% Restart scheme when the objective increases}} 
 
\State $e_k = u^\top A v$, rs $=0$.

\IF {$k \geq 2$ and $e_k > e_{k-1}$ and $\beta > 0$}

\State $u = u_p$, 
$v = v_p$, 
$v_e = v_p$, 
$\beta_p = \frac{\beta}{\eta}$, 
$\beta = 0$, 
rs $=1$,
$e_k = e_{k-1}$. \emph{\% Next step will not extrapolate}

\ELSE

\State $\beta = \min( 1, \gamma  \beta_p)$, $\beta_p = \beta$. 

\ENDIF

\State $k \leftarrow k+1$. 

\ENDWHILE  

\end{algorithmic}
\end{algorithm}

In our experiments, we will use multiple random initializations: 
we generate $u_0$ at random using the Gaussian distribution (\texttt{u$\_$0 = randn(m,1)}); note that $u_0$ does not necessary belong to $P$. Then we get $v_0 \in Q$ by solving\footnote{Recall that, if the optimal $v^* = 0$, it means the optimal solution for $\|v\|=1$ is an extreme ray of $Q$ minimizing the objective.} $\min_{v \in Q, \|v\| \leq 1} \langle u,Av\rangle $. 
Also, because of the automatic tuning of $\beta$, E-AO is not too sensitive to its initial value; we will use $\beta = 0.5$.

\subsubsection{A sequential regularized partial linearization algorithm} \label{sec:fracprog}

Recently, de Oliveira, Sessa, and Sossa~\cite{de2023computing}  proposed a sequential regularized partial linearization (SRPL) algorithm for computing the maximal angle between two linear images of symmetric cones (LISCs). 
It is straightforward to adapt that method for solving $\SV(A,P,Q)$. For simplicity of the presentation, we only describe the algorithm for polyhedral cones 
which are particular instances of LISCs. 
Let $A\in\mathbb R^{m\times n}$, and $P\subset\mathbb R^m$ and $Q\subset\mathbb R^n$ be polyhedral cones generated by $G$ and $H$, respectively, as in~\eqref{polyhedral}. 
In this section, we also assume that $P$ and $Q$ are pointed. That $P$ is pointed means that $P\cap -P=\{0\}$ which is equivalent to the property that
$x\geq 0$ and $Gx=0$ imply $x=0$.

The problem $\SV(A,P,Q)$ can be equivalently reformulated as 
\begin{equation}\label{frac1}
\min\limits_{u\in P,\,v\in Q}\;\displaystyle\frac{\langle u,Av\rangle}{\Vert u\Vert \Vert v\Vert}\quad \mbox{such that}\quad u\neq 0\mbox{ and }v\neq 0.
\end{equation}
The $\ell_2$ norm constraint in $\SV(A,P,Q)$ is removed 
by using a fractional objective function in~\eqref{frac1}. 
By making the change of variables $u=Gx$ and $v=Hy$, the objective function does not depend on the normalizations on $x,y$, so we can impose instead $\langle {\bf 1},x\rangle =\langle {\bf 1},y\rangle  =1$. 
Then, by denoting $\Delta_d:=\{x\in\mathbb R^d: x\geq 0,\, \langle {\bf 1},x\rangle = 1\}$, the probability simplex in $\mathbb R^d$, \eqref{frac1} becomes 
\begin{equation}\label{frac2}
\min\limits_{x\in\Delta_p,\,y\in\Delta_q}\;\displaystyle\Phi(x,y):=\frac{\langle G x, A Hy\rangle}{\Vert Gx\Vert \Vert Hy\Vert}.
\end{equation}
{The denominator of \eqref{frac2} does not vanish for any $(x,y)\in\Delta_p\times\Delta_q$ because of the pointedness assumption on the cones.} The SRLP algorithm, described in Algorithm\,\ref{alg:frac}, relies on solving the fractional program \eqref{frac2} by following the approach given by Dinkelbach in \cite{dinkelbach}. That is, the problem \eqref{frac2} is reformulated as a parametric optimization problem on $\Delta_p\times\Delta_q$ with objective function $f_\delta(x,y)=\langle G x, A Hy\rangle-\delta \Vert Gx\Vert \Vert Hy\Vert$ with $\delta\in\mathbb R.$ The method to solve this parametrized problem consists on linearizing  $f_\delta(x,y)$ with respect to each variable and solving regularized linear programs for each variable; see Algorithm~\ref{alg:frac}. 
\begin{algorithm}[h!]
\caption{Sequential Regularized Partial Linearization (SRPL)}\label{alg:frac}
\begin{algorithmic}[1]

\REQUIRE Matrix $A\in\mathbb R^{m\times n}$, polyhedral cones $P=G(\mathbb R^p_+)$ and $Q=H(\mathbb R^q_+)$; initial points $x^0\in\Delta_p$, $y^0\in\Delta_q$; prox-parameters  $\mu_1,\mu_2\geq0$; line-search algorithm parameters $\beta>0$, $0<\alpha<1$, $0<\rho<1$; maximum number of iteration $K$, stopping criterion $\delta \ll 1$. (Default:  $\beta =1$, $\alpha = .001$, $\delta=10^{-6}$, $K=5000$, $\rho = .2$.)

\ENSURE An approximate solution to $\min_{u \in P, v \in Q} \langle u, A v\rangle \text{ such that } \|u\|=\|v\| = 1$.

\State Set $k:=0$.
\State Set $\displaystyle \delta_k:=\frac{\langle Gx^k,A Hy^k\rangle}{\Vert G x^k\Vert \Vert H y^k\Vert}.$
\vspace{0.2cm}
\State Let $L^k_1(x) := \left\langle Gx,AHy^k-\delta_k\Vert Gx^k\Vert^{-1}\Vert Hy^k\Vert Gx^k\right\rangle$. Compute a solution $\tilde x^k$ to the convex  program
\begin{equation}\label{lin1}
\min_{x \in \Delta_p}  L^k_1(x) + \frac{\mu_1}{2} \|x - x^k\|^2. 
\end{equation}

\State Let $L^k_2(y) :=  \left\langle Hy,A^\top G x^k-\delta_k\Vert G x^k\Vert\Vert Hy^k\Vert^{-1} Hy^k\right\rangle$. Compute a solution $\tilde y^k$ to the convex program
 \begin{equation}\label{lin2}
\min_{y\in \Delta_q}  L^k_2(y) + \frac{\mu_2}{2} \|y - y^k\|^2 . 
\end{equation}

\State Let $d_1^k:=\tilde x^k-x^k$ and $d_2^k:=\tilde y^k-y^k$.
\vspace{0.2cm}

\State If ($\vert L^k_1( d_1^k)\vert<\delta$ and $\vert L^k_2( d_2^k)\vert<\delta$) or $k\geq K$ terminate. \newline Otherwise, let $t_k:= \beta \rho^{\ell_k}$, where $\ell_k$ is the smallest nonnegative integer $\ell$ such that
$$\Phi(x^k+t^k d^k_1,y^k+t^k d^k_2) \leq \Phi(x^k,y^k)  + 
\alpha t_k \frac{L^k_1( d_1^k)+L^k_2( d_2^k)}{\Vert Gx^k\Vert\Vert Hy^k\Vert}
$$
Set $\left(x^{k+1},y^{k+1}\right):=(x^k,y^k)+t_k(d^k_1,d^k_2)$. Go to step 2.
\end{algorithmic}
\end{algorithm}
{Note that the solution of problem \eqref{lin1} in Algorithm~\ref{alg:frac} can be obtained by solving the following projection problem onto the probability simplex: 
\begin{equation*}
\min\limits_{x\in \Delta_p}  \frac{1}{2} \Big\|x - \big(x^k-\frac{c_k}{\mu_1}\big) \Big\|^2
\quad \mbox{with}\quad 
c_k :=  G^\top(AHy^k-\delta_k\Vert Gx^k\Vert^{-1}\Vert Hy^k\Vert Gx^k).
\end{equation*}
An analogous observation holds for problem \eqref{lin2}.}

\section{Numerical experiments} \label{sec:numexp}

To solve problem $\SV(A,P,Q)$, we have described four methods: the brute-force active set (BFAS, 
Algorithm~\ref{alg:active_set}), a non-convex quadratic solver for polyhedral cones with Gurobi (Gur), the extrapolated alternating optimization method 
(E-AO, Algorithm~\ref{alg:eao}) and the sequential regularized partial linearization (SRPL, Algorithm~\ref{alg:frac}). 

Among them, only BFAS and Gur can verifiably solve the problem exactly (up to a fixed tolerance). E-AO and SRPL are instead designed to be fast heuristics, but they only converge to stationary points. BFAS can also be considered a heuristic method when bound by a fixed time limit. Gurobi, instead, has a built-in fast auxiliary heuristic method which is used to speed up the branch and bound method, so Gur is also able to provide good upper bounds to the optimal solution in a relatively short time.

 All experiments are implemented in MATLAB (R2024a) and run on a laptop with an 13th Gen Intel Core™ i7-1355U and 16 GB RAM. For the experiments involving the software Gurobi, we use the version 11.0.0. 
The codes, data and results for all algorithms and experiments can be found in the repository \url{https://github.com/giovannibarbarino/coneSV/}.

\subsection{Schur cone and nonnegative orthant} 

The Schur cone $\mc H$ in $n$ dimensions is defined by 
$$
\mc H:= \left\{x\in \f R^n \,{\Bigg |}\, 
\sum_{i=1}^k x_i \ge 0 \quad\forall 1\le k<n,\,\,
 \langle {\bf 1}, x\rangle  = 0\right\}\subset\R^n.
$$
First, we derive the maximal angle and the antipodal pairs of $\MA(\mc H,\f R_+^n)$, so that we have a ground truth over which to compare the algorithms. 

\begin{lemma}\label{lem:max_angle_schur_posorth}
    If $\mc H$ is the Schur cone in $n$ dimensions, then \[
\min_{
	\begin{array}{c}
	x\in \mc H, y\in \f R^n_+\\
	\|x\|=\|y\|=1
	\end{array}
}
\langle x,y\rangle = \langle x^*,y^*\rangle = - \sqrt {1-\frac {1}n}, 
\qquad 
y^* = e_n, \quad x^* = \sqrt{\frac n{n-1}} \left(\frac {\bf 1}n -e_n\right),
\]
where $(x^*,y^*)$ is the only antipodal pair.
\end{lemma}
\begin{proof}
Let $\lambda^*$ denote the optimal value of $\MA(\mc H,\R^n_+)$. Notice that $\mc H$ is not contained in $\R^n_+$ but it is contained in the subspace ${\bf 1}^\perp$, so
\[
 0>\lambda^* \ge 
 - \max_{\scriptsize
 	\begin{array}{c}
	x\in {\bf 1}^\perp, y\in \f R^n_+\setminus\R_+{\bf 1}\\
	\|x\|=\|y\|=1
	\end{array}
 }
 \langle x,y\rangle
 = -1-\frac{1}{2}\min_{\scriptsize
 	\begin{array}{c}
	y\in \f R^n_+\setminus\R_+{\bf 1}\\
	\|y\|=1
	\end{array}
 } 
  \min_{\scriptsize
 	\begin{array}{c}
	x\in {\bf 1}^\perp\\
	\|x\|=1
	\end{array}
 }
 \Vert y-x\Vert^2.
 \]
Observe that for $y\in\R^n_+\setminus \R_+{\bf 1}$ such that $\Vert y\Vert=1$, the last minimization problem has the orthogonal projection of $y$ onto ${\bf 1}^\perp\cap S_n$ as its unique solution. This is given by ${\rm Proj}_{{\bf 1}^\perp}(y)/\Vert {\rm Proj}_{{\bf 1}^\perp}(y)\Vert$,
    where ${\rm Proj}_{{\bf 1}^\perp}(y)=y-(\langle y,{\bf 1}\rangle/n) {\bf 1}$ is the projection of $y$ onto ${\bf 1}^\perp$, see \cite{bauschke-projectingonto2018}. Then,
    $$\lambda^*\geq
 -\max_{\scriptsize
 	\begin{array}{c}
	y\in \f R^n_+,\,\|y\|=1
	\end{array}
 } 
 \left(1-\frac{1}{n}\langle y,{\bf 1}\rangle^2\right)^{1/2}
 =
 -
 \left(1-\frac{1}{n}\left[\min_{\scriptsize
 	\begin{array}{c}
	y\in \f R^n_+,\,\|y\|=1
	\end{array}
 } \langle y,{\bf 1}\rangle\right]^2\right)^{1/2}= - \sqrt {1-\frac {1}n}.
 $$
The proof concludes by noticing that $(x^*,y^*)$ achieves the above bound, and that the solution set of the last minimization problem are the canonical vectors of $\R^n$, where $y^*=e_n$ is the only vector that produces a feasible $x^*$ in $\mc H$.  
\end{proof}

 Table \ref{table:Gur_BFAS_Schur_orthant} reports the optimal value found by the algorithms Gur and BFAS regarding the problem of finding the largest angle between the Schur cone and the positive orthant in dimension $n$ for $n=5,10,20,50,100,200,500$. When the algorithms terminate in less than $60$ seconds, we report the elapsed time, otherwise we report the best value found in $60$ seconds. The numbers in bold are the optimal angle found, whenever they coincide with the exact angle up to a tolerance of $10^{-5}\pi$, and the elapsed time is under $60$ seconds. Sometimes, the algorithms may find larger angles than the exact ones, but it has to be attributed to rounding errors.

\begin{table}[ht!]
\caption{Numerical comparison for Gur and BFAS for different dimensions for the problem of finding the maximal angle between the Schur cone and the positive orthant cone. The table reports the optimal objective function values (in terms of angles, which is more interpretable) found in the time limit (60 seconds) and the actual elapsed time. We also report the exact value for each problem. 
\label{table:Gur_BFAS_Schur_orthant}}
\begin{center}
\begin{tabular}{c|ccccccc}
$n$ \hspace{-0.2cm} & \hspace{-0.2cm} $5 $\hspace{-0.2cm} & \hspace{-0.2cm} $10 $\hspace{-0.2cm} & \hspace{-0.2cm} $20 $\hspace{-0.2cm} & \hspace{-0.2cm} $50 $\hspace{-0.2cm} & \hspace{-0.2cm} $100 $\hspace{-0.2cm} & \hspace{-0.2cm} $200 $\hspace{-0.2cm} & \hspace{-0.2cm} $500 $\\ \hline  \hline 
exact  & \hspace{-0.2cm} $0.852416 \pi$\hspace{-0.2cm} & \hspace{-0.2cm} $0.897584 \pi$\hspace{-0.2cm} & \hspace{-0.2cm} $0.928217 \pi$\hspace{-0.2cm} & \hspace{-0.2cm} $0.954833 \pi$ \hspace{-0.2cm} & \hspace{-0.2cm}$0.968116 \pi$\hspace{-0.2cm} & \hspace{-0.2cm} $0.977473 \pi$\hspace{-0.2cm} & \hspace{-0.2cm} $0.985760 \pi$\\ \hline  \hline 
Gur & \hspace{-0.2cm}  $\bm{0.852416} \pi$ \hspace{-0.2cm} & \hspace{-0.2cm} $ \bm{0.897584} \pi$\hspace{-0.2cm} & \hspace{-0.2cm} $ \bm{0.928218} \pi$\hspace{-0.2cm} & \hspace{-0.2cm} $ \bm{0.954833} \pi$\hspace{-0.2cm} & \hspace{-0.2cm} $ \bm{0.968116} \pi$\hspace{-0.2cm} & \hspace{-0.2cm} $ \bm{0.977473} \pi$\hspace{-0.2cm} & \hspace{-0.2cm} $ \bm{0.985756} \pi$\\ 
\hspace{-0.2cm} & \hspace{-0.2cm} $  \bm{0.1134} $ s\hspace{-0.2cm} & \hspace{-0.2cm} $  \bm{0.2016} $ s\hspace{-0.2cm} & \hspace{-0.2cm} $ \bm{20.1493} $ s\hspace{-0.2cm} & \hspace{-0.2cm} $60 ^*$ s\hspace{-0.2cm} & \hspace{-0.2cm} $60 ^*$ s\hspace{-0.2cm} & \hspace{-0.2cm} $60 ^*$ s\hspace{-0.2cm} & \hspace{-0.2cm} $60 ^*$ s\\ \hline
BFAS & \hspace{-0.2cm} $ \bm{0.852416} \pi$\hspace{-0.2cm} & \hspace{-0.2cm} $ \bm{0.897584} \pi$\hspace{-0.2cm} & \hspace{-0.2cm} $ 0.750000 \pi$\hspace{-0.2cm} & \hspace{-0.2cm} $ 0.750000 \pi$\hspace{-0.2cm} & \hspace{-0.2cm} $ 0.750000 \pi$\hspace{-0.2cm} & \hspace{-0.2cm} $ 0.750000 \pi$\hspace{-0.2cm} & \hspace{-0.2cm} $ 0.750000 \pi$\\ 
\hspace{-0.2cm} & \hspace{-0.2cm} $  0.3310 $ s\hspace{-0.2cm} & \hspace{-0.2cm} $ 48.3153 $ s\hspace{-0.2cm} & \hspace{-0.2cm} $60 ^*$ s\hspace{-0.2cm} & \hspace{-0.2cm} $60 ^*$ s\hspace{-0.2cm} & \hspace{-0.2cm} $60 ^*$ s\hspace{-0.2cm} & \hspace{-0.2cm} $60 ^*$ s\hspace{-0.2cm} & \hspace{-0.2cm} $60 ^*$ s 
\end{tabular} 
 \end{center} \vspace{-0.3cm} 
 \end{table}
 
We observe that Gur outperforms BFAS both in speed and accuracy. In fact, Gur guarantees to have found the optimal solution up to dimension $20$ in less than $60$ seconds, and even when its computational time exceeds a minute, it finds the exact solution up to dimension at least $500$. BFAS instead can only solve the problem exactly up to dimension $10$ and it is way slower than Gur.  

\begin{figure}[ht!]
\begin{center} 
\begin{tabular}{cc}
Exp.~1: Schur cone - Nonnegative Orthant & Exp.~2:  Schur cone - Schur cone 
\\
\hspace{-.3 cm} \includegraphics[width=.48\textwidth]{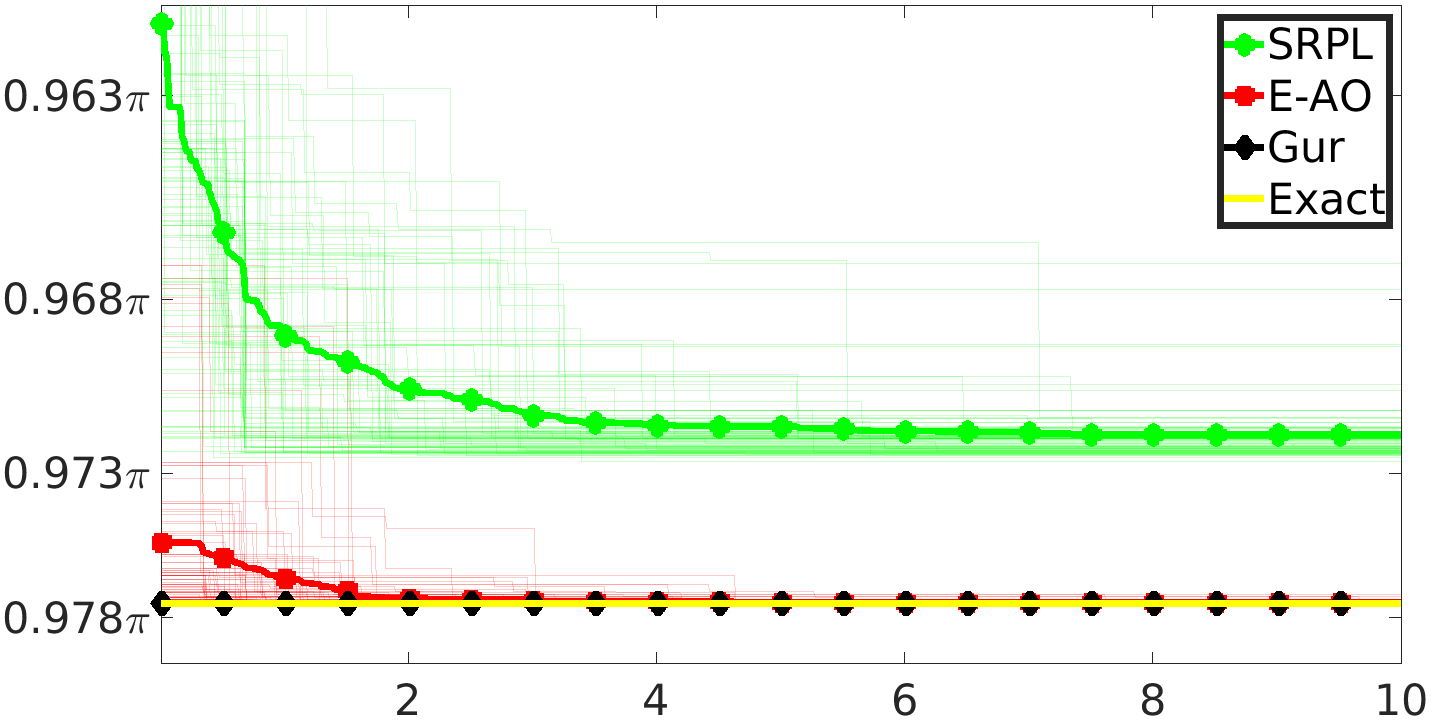}&
  \includegraphics[width=.48\textwidth]{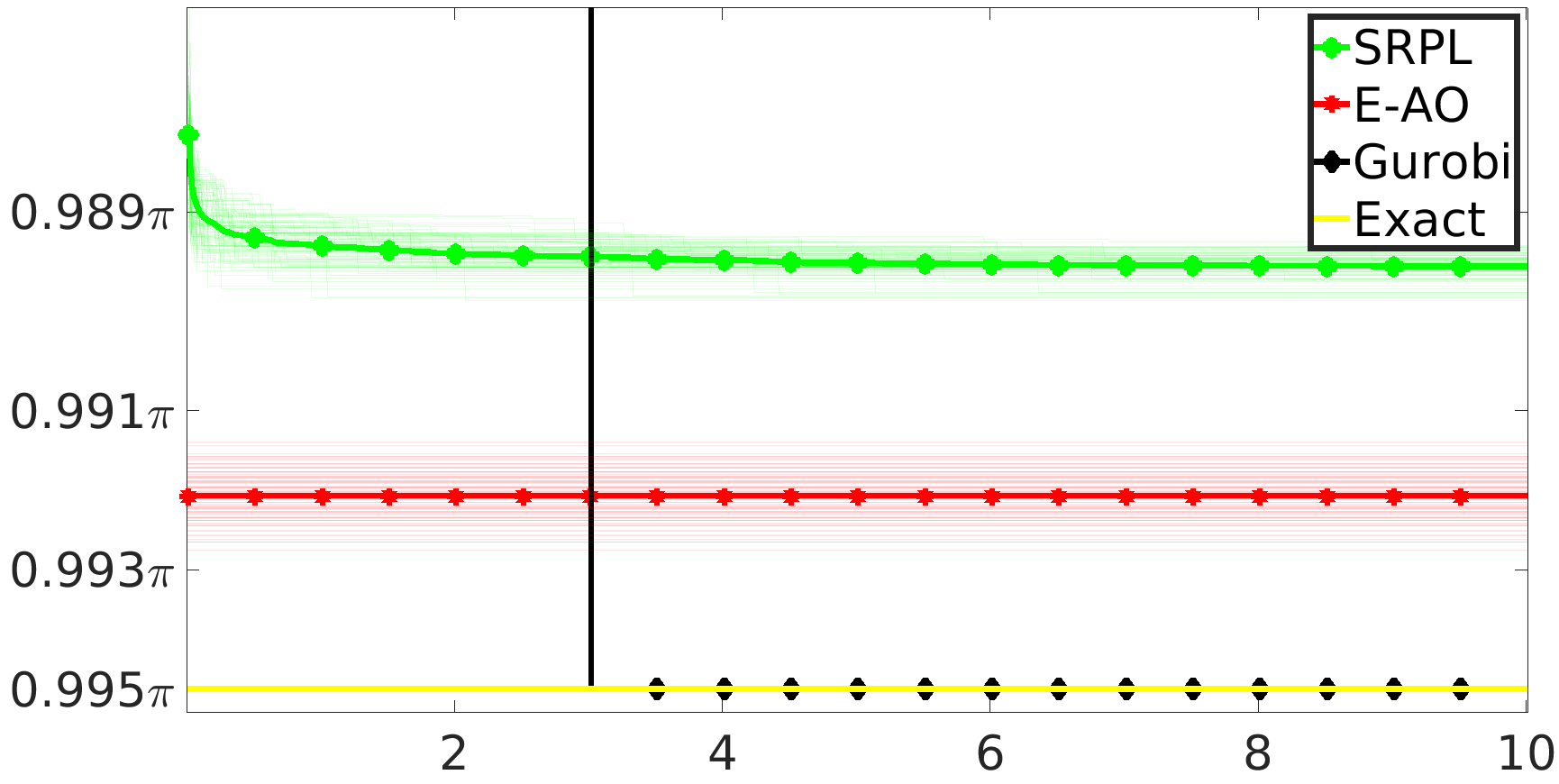}\end{tabular}
\caption{E-AO, SRPL, BFAS and Gurobi compared on the problem of finding the maximal angle between the Schur cone and the nonnegative orthant (left image), and between the Schur cone and itself (right image) in dimension $n=200$. For E-AO and SRPL, $100$ iterations from randomly generated points are plotted in lighter colors, and their average with a thick line. The x-axis represents time in seconds.
\label{fig:Schur-PosOrthant}}  
\end{center}
\end{figure}   
In the left image of Figure \ref{fig:Schur-PosOrthant}, we report a comparison of the four methods over a timespan of $10$ seconds for the same problem in dimension $n=200$. Since a single execution of E-AO and SRPL usually takes between $0.3$ and $2$ seconds, we restart the algorithms with a new random initial point and keep only the best solution, until we reach the mark of $10$ seconds. We perform $100$ of these $10$ seconds run for both heuristic algorithms and plot the average value over time, in addition to all performed runs in the background. The parameters used in SRPL are $ \mu_1= 0.25$, $\mu_2=0.01$.  

We observe that Gur immediately reaches the optimum value, also met by E-AO in less than $2$ seconds on average. SRPL, instead, does not converge to the optimal value in any run, and it seems unable to break the $0.972\pi$ barrier. 
Finally, BFAS is not represented in the plot since its best value at $10$ seconds is equal to $0.75\pi$.

\subsection{Schur cone with itself} 

{The Schur cone has a more involved structure than the nonnegative orthant. So, in this section, we test the performance of our algorithms when both cones on the problem $\MA(P,Q)$ are the Schur cones, that is, $P=Q=\mc H$. The precise optimal value of $\MA(\mc H,\mc H)$ was computed in \cite[Proposition\,2]{gourion2010critical}  and is equal to $\cos\left(\frac{n-1}{n}\pi\right)$.

Table~\ref{table:Gur_BFAS_Schur_Schur} reports the optimal value found by the algorithms Gur and BFAS regarding the problem of finding the largest angle between the Schur cone and itself in dimension $n \in \{5, 10, 20, 50, 100, 200, 500\}$. We use exactly the same setting as in the previous section (Table~\ref{table:Gur_BFAS_Schur_orthant}). 

We observe that Gur outperforms again BFAS both in speed and accuracy. In fact, even when its computational time exceeds a minute, it finds the exact solution up to dimension $50$, and also for dimensions $200, 500$. The error in dimension $100$, though, tells us that it cannot always be blindly trusted. BFAS instead can only solve the problem exactly up to dimension $5$, and the approximations at $60$ seconds is only reliable up to dimension $10$.  Sometimes, the algorithms may find larger angles than the exact ones, but it has to be attributed to rounding errors.

Notice that in this case, Gurobi results are less accurate than those of the previous problem. This is possibly due to the fact that the matrix $H^\top H$, where $H$ are the generators of $\mc H$, is less sparse than the matrix $H^\top I$, where $I$ represents the generators of the nonnegative orthant.
 	
In the right image of Figure~\ref{fig:Schur-PosOrthant}, we report a comparison of the four methods over a timespan of $10$ seconds for the same problem in dimension $n=200$. Again, E-AO and SRPL are usually restarted until we reach the $10$ seconds mark, and we perform $100$ of these $10$ seconds runs for both heuristic algorithms, plotting the average value over time, and all performed runs in the background. The parameters used by  SRPL in this case are $\mu_1=\mu_2=1$.

This time, Gur is the only method to correctly converge to the optimum value in the $10$ seconds (actually, after only $3$ seconds). E-AO almost immediately converges to $0.992\pi$ but in no run it manages to reach the optimal value. SRPL offers a more diverse plot, but it always converges to an even larger value. Finally, BFAS is again not represented in the plot since its best value at $10$ seconds is equal to $0.75\pi$. 

\begin{table}[H]
\caption{Numerical comparison for Gurobi and BFAS for different dimensions for the problem of finding the maximal angle between the Schur cone and itself. The table reports the optimal objective function values found in the time limit (60 seconds) and the actual elapsed time. We also report the exact value for each problem. \label{table:Gur_BFAS_Schur_Schur}}
\begin{center}
\begin{tabular}{c|ccccccc}
$n$ \hspace{-0.2cm} & \hspace{-0.2cm}  $5 $\hspace{-0.2cm} & \hspace{-0.2cm}  $10 $\hspace{-0.2cm} & \hspace{-0.2cm}  $20 $\hspace{-0.2cm} & \hspace{-0.2cm}  $50 $\hspace{-0.2cm} & \hspace{-0.2cm}  $100 $\hspace{-0.2cm} & \hspace{-0.2cm}  $200 $\hspace{-0.2cm} & \hspace{-0.2cm}  $500 $\\ \hline  \hline 
exact & \hspace{-0.2cm}  $0.800000 \pi$\hspace{-0.2cm} & \hspace{-0.2cm}  $0.900000 \pi$\hspace{-0.2cm} & \hspace{-0.2cm}  $0.950000 \pi$\hspace{-0.2cm} & \hspace{-0.2cm}  $0.980000 \pi$\hspace{-0.2cm} & \hspace{-0.2cm}  $0.990000 \pi$\hspace{-0.2cm} & \hspace{-0.2cm}  $0.995000 \pi$\hspace{-0.2cm} & \hspace{-0.2cm}  $0.998000 \pi$\\ \hline  \hline 
Gur & \hspace{-0.2cm}  $ \bm{0.800001} \pi$\hspace{-0.2cm} & \hspace{-0.2cm}  $ \bm{0.900000} \pi$\hspace{-0.2cm} & \hspace{-0.2cm}  $ \bm{0.950000} \pi$\hspace{-0.2cm} & \hspace{-0.2cm}  $ \bm{0.980000} \pi$\hspace{-0.2cm} & \hspace{-0.2cm}  $ 0.936315 \pi$\hspace{-0.2cm} & \hspace{-0.2cm}  $ \bm{0.994996} \pi$\hspace{-0.2cm} & \hspace{-0.2cm}  $ \bm{0.998011}  \pi$\\ 
\hspace{-0.2cm} & \hspace{-0.2cm}  $  \bm{0.2508} $ s\hspace{-0.2cm} & \hspace{-0.2cm}  $60 ^*$ s\hspace{-0.2cm} & \hspace{-0.2cm}  $60 ^*$ s\hspace{-0.2cm} & \hspace{-0.2cm}  $60 ^*$ s\hspace{-0.2cm} & \hspace{-0.2cm}  $60 ^*$ s\hspace{-0.2cm} & \hspace{-0.2cm}  $60 ^*$ s\hspace{-0.2cm} & \hspace{-0.2cm}  $60 ^*$ s\\ \hline
BFAS & \hspace{-0.2cm}  $ \bm{0.800000} \pi$\hspace{-0.2cm} & \hspace{-0.2cm}  $ \bm{0.900000} \pi$\hspace{-0.2cm} & \hspace{-0.2cm}  $ 0.859157 \pi$\hspace{-0.2cm} & \hspace{-0.2cm}  $ 0.804087 \pi$\hspace{-0.2cm} & \hspace{-0.2cm}  $ 0.750000 \pi$\hspace{-0.2cm} & \hspace{-0.2cm}  $ 0.750000 \pi$\hspace{-0.2cm} & \hspace{-0.2cm}  $ 0.750000 \pi$\\ 
\hspace{-0.2cm} & \hspace{-0.2cm}  $  0.3856 $ s\hspace{-0.2cm} & \hspace{-0.2cm}  $60 ^*$ s\hspace{-0.2cm} & \hspace{-0.2cm}  $60 ^*$ s\hspace{-0.2cm} & \hspace{-0.2cm}  $60 ^*$ s\hspace{-0.2cm} & \hspace{-0.2cm}  $60 ^*$ s\hspace{-0.2cm} & \hspace{-0.2cm}  $60 ^*$ s\hspace{-0.2cm} & \hspace{-0.2cm}  $60 ^*$ s
\end{tabular} 
 \end{center} 
 \end{table}

\subsection{Computing the biclique number}

Given a biadjacency matrix $B\in \{0,1\}^{m\times n}$, solving the NP-hard  maximum edge biclique problem is equivalent to solving the Pareto singular value problem $\PSV(-M)$ where $M = B-(1-B)d$ and  $d\ge \max(m,n)$; 
see Theorems~\ref{th:rank1NF} and~\ref{th:SVcones_NPhard}.  Here we thus test all four algorithms on four bipartite graphs taken from the dataset in \cite{MaximumBicliqueBenchmark}. 
They correspond to the files \url{https://github.com/giovannibarbarino/coneSV/Biclique_matrix_n.txt} in the repository for $n=1,2,3,4$. 
All graphs have been randomly generated with a fixed edge density, and then a biclique has been added (planted) to them. 
In particular,
\begin{itemize}
    \item the first graph is a $100 \times 100$ graph with density $0.2$ and planted biclique of size $50 \times 50 = 2500$,
    
    \item the second graph is a $300 \times 300$ graph with density $0.3$ and planted biclique of size $2 \times 55=110$,
    
    \item the third graph is a $100 \times 100$ graph with density $0.71$ and planted biclique of size $80 \times 80 = 6400$,
    
    \item the fourth graph is a $10000 \times 100$ graph with density $0.03$ and planted biclique of size $22 \times 2 = 44$.
\end{itemize}
All algorithms are stopped after $10$ seconds, and the results (rounded to the nearest integer value) are reported in Table \ref{table:Gur_BFAS_biclique}. E-AO and SRPL are restarted until we reach the $10$ seconds mark, and we perform $100$ of these $10$ seconds run for both heuristic algorithms, reporting the average value after $10$ seconds, and the best value reported among the $100$ runs whenever it differs from the average value by more than $1$. The parameters used in SRPL are $ \mu_1= 0.25$, $\mu_2=0.01$.  

In this case, the optimal values are not known, since for the second and fourth graphs we find larger bicliques than the ones reported in \cite{MaximumBicliqueBenchmark}, and also larger than the planted ones. 
From the results, we can see that SRPL outperforms all the other algorithms. In fact, it always manages to find the best value for all graphs, and it is the only algorithm to consistently beat or equate the size of the planted biclique graph.  
Notice that even in the case of low density, the matrix $A$ has so many nonzero entries that Gurobi cannot even move from an initial guess of $0$ for the second graph, and overloads the RAM for the fourth graph.  
\begin{table}[H]
\caption{Numerical comparison for Gur, BFAS, E-AO and SRPL for the problem of finding the maximum edge biclique in four different bipartite graphs.
The table reports the maximum edge biclique found in the time limit (10 seconds) for Gurobi and BFAS. The reported numbers for E-AO and SRPL are the average value found within 10 seconds for 100 runs, and in parentheses the best value found throughout all 100 runs when it differs from the average one. Gurobi cannot be executed on the last graph due to its excessive size.  
\label{table:Gur_BFAS_biclique}}
\begin{center}
\begin{tabular}{c|cccc}
$m\times n$ & $100\times 100 $ & $300\times 300 $& $100\times 100 $& $10000\times 100 $\\ \hline  \hline
Gur& $      \bm{2500} $ & $0$ & $310$ &  NA \\  \hline
BFAS& $       3 $& $2$& $2$& $2$\\ \hline
E-AO& $       66 $ & $\bm{114}$& $87$& $12$\\ \hline
SRPL & $      \bm{ 2500} $& $\bm{114}$& $\bm{6400}$& $46$($\bm{358}$)
\end{tabular} 
 \end{center} \vspace{-.5cm} 
 \end{table}

\subsection{Computing the maximal angle between PSD and symmetric nonnegative cones via Pareto singular values}

 \begin{wrapfigure}{l}{0.4\textwidth}
 \vspace{-1cm}
\begin{center}
\begin{tabular}{c||c|c}
$n$ & block circulant & $\MA(\mca P_n,\mca N_n)$ \\ \hline 
$5$& $0.7575\pi  $ & $0.7575\pi  $ \\  
$6$& \color{blue}$0.7575\pi  $ & $0.7575\pi  $ \\  
$7$& \color{blue}$0.7575\pi  $ & $0.7575\pi  $ \\  
$8$& $0.7608\pi  $ & $0.7608\pi  $ \\ 
$9$& \color{blue}$0.7608\pi  $ & $0.7608\pi  $ \\ 
$10$& \color{blue}$0.7608\pi  $ & $0.7609\pi  $ \\ 
$11$& $0.7627\pi  $ & $0.7627\pi  $ \\ 
$12$& $0.7649\pi  $ & $0.7649\pi  $ \\ 
$13$& \color{blue}$0.7649\pi  $ & $0.7649\pi  $ \\ 
$14$& \color{blue}$0.7649\pi  $ & \color{red}$0.7659\pi  $ \\ 
$15$& \color{blue}$0.7649\pi  $ & \color{red}$0.7678\pi  $ \\ 
$16$& $0.7670\pi  $ & \color{red}$0.7699\pi  $ \\ 
$17$& \color{blue}$0.7670\pi  $ & \color{red}$0.7699\pi  $ \\ 
$18$& \color{blue}$0.7670\pi  $ & \color{red}$0.7699\pi  $ \\ 
$19$& $0.7681\pi  $ & \color{red}$0.7703\pi  $ \\ 
$20$& $0.7719\pi  $ & $0.7719\pi  $ \\ 
$21$& \color{blue}$0.7719\pi  $ & $0.7719\pi  $ \\ 
$22$& \color{blue}$0.7719\pi  $ & $0.7719\pi  $ \\ 
$23$& \color{blue}$0.7719\pi  $ & \color{red}$0.7722\pi  $ 
\end{tabular} 
\captionof{table}{First column: Largest angle between block-circulant matrices in $\mca P_n$ and in $\mca N_n$. Second column: Largest known angle between $\mca P_n$ and $\mca N_n$.
}\label{table:Block_Circulant_PD_SNN_comparison}
\end{center}
 \vspace{-.5cm}
 \end{wrapfigure}

Let $\mathcal S^n$ denote the space of symmetric matrices of order $n$. Let $\mathcal P_n\subset\mathcal S^n$ denote the cone of positive semidefinite matrices (PSD cone), and let $\mathcal N_n\subset \mathcal S^n$ denote the cone of matrices with nonnegative entries. 
For any matrix $M$ denote $M^-:= -\min\{M,0\}$. 
From \cite{Goldberg2013OnTM}, we know that given  a nonzero $N\in \mca N_n$,  
{the matrix $P\in \mca P_n$ that maximizes the angle between $N$ and $\mca P_n$} 
is the negative semidefinite part of $N$, that is, if $N = Q\Lambda Q^\top$ is the eigendecomposition of $N$, then $P =Q\Lambda^-Q^\top = Q(Q^\top NQ)^-Q^\top$. 
Vice versa, given a non zero $P\in \mca P_n$ , 
{the matrix $N\in \mca N_n$ that maximizes the angle between $\mca N_n$ and $P$} is the negative part of $P$, that is, $N = P^-$.  

The problem $\MA(\mca P_n,\mca N_n)$ has been studied by several authors, in particular because it gives a bound on the problem  $\MA(\mca T_n,\mca T_n)$ where $\mca T_n$ is the cone of copositive matrices \cite{Goldberg2013OnTM,Zhang2021matrixangle5x5,Yu2025copositive3x3}. 
The cone $\mca P_n$ is not polyhedral, so in the next section we further restrict the problem to the polyhedral cone of \textit{circulant matrices} inside $\mca P_n$, called $\mca {CP}_n$. We will prove that solving $\MA(\mca {CP}_n,\mca N_n)$ is equivalent to solve $\MA(\mca {CP}_n,\mca {CN}_n)$, where $\mca {CN}_n$ are the circulant matrices in $\mca {N}_n$ (Lemma~\ref{lem:equivalence_circulant_SPN_SNN}). As a consequence, all operations will be done on the algebra of circulant matrices, allowing us to further simplify the problem.

The motivation behind this simplification is that, for many dimensions,  the best value for the problem $\MA(\mca {CP}_m,\mca {CN}_m)$  among all $m\le n$ corresponds to the best known value for the general problem $\MA(\mca P_n,\mca N_n)$, as shown in \cite{de2023computing} and reported in Table~\ref{table:Block_Circulant_PD_SNN_comparison}. In other words, $\MA(\mca P_n,\mca N_n)$ is often solved by a pair of block-circulant matrices, that is,  matrices with diagonal blocks, where all blocks are circulant. 

 Table~\ref{table:Block_Circulant_PD_SNN_comparison} provides the best values for the two problems, where the angles between block-circulant matrices in $\mca P_n$ and those in $\mca N_n$ have been computed exactly by Gur up to dimension $23$ (for higher dimensions, the computational time exceeds $24$ hours).
 The values in blue correspond to solutions that are not circulant, but just block circulant, and they are always equal to $\max_{m\le n} \MA(\mca {CP}_m,\mca {CN}_m)$. In particular, such optimal block-circular matrices will be some optimal couple of circulant matrices for $\MA(\mca {CP}_m,\mca {CN}_m)$ with $m<n$, padded with zero rows and columns to reach dimension $n$. 

The values in red indicate the dimensions where we know a feasible point for $\MA(\mca P_n,\mca N_n)$ with larger angle than the optimal solution of $\MA(\mca {CP}_m,\mca {CN}_m)$ for every $m\le n$, with a tolerance of $10^{-4}\pi$ to account for rounding errors. We see that up to dimension $13$, the problem restricted to block-circulant matrices presents the same optimal angle as the best known one for $\MA(\mca P_n,\mca N_n)$, up to our tolerance level. There are some higher dimensions where the two problems have the same values, but they tend to be less and less frequent as the dimension increases. This is a topic for further research.

\subsubsection{Approaching the maximal angle by symmetric circulant matrices}

\begin{lemma}\label{lem:equivalence_circulant_SPN_SNN}
   The problem $\MA(\mca {CP}_n,\mca {N}_n)$ 
   has the same solutions as $\MA(\mca {CP}_n,\mca {CN}_n)$. If $n=1+2m$ is odd, then $\MA(\mca {CP}_n,\mca {CN}_n)$ has the same solution as  $\PSV(M)$, where
\[
M = \frac 2{\sqrt n}\left[   \cos \left(  \frac {2\pi}nij 
\right)
\right]_{i,j=1:m} 
 \in \mathbb{R}^{m \times m}. 
\] 
\end{lemma}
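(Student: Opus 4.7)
The plan proceeds in two stages: first reduce $\mca N_n$ to its circulant subcone $\mca{CN}_n$ via a symmetrization argument, then diagonalize in the Fourier basis to recognize $\PSV(M)$.

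\medskip

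\textbf{Stage 1 (symmetrization).} Let $C$ denote the cyclic shift of order $n$. Every $P\in\mca{CP}_n$ belongs to the algebra $\R[C]$ and therefore commutes with $C$, while conjugation $X\mapsto C^kXC^{-k}$ (note $C^{-1}=C^\top$) preserves symmetry and entrywise nonnegativity, so it maps $\mca N_n$ into itself. For any feasible pair $(P,N)\in\mca{CP}_n\times\mca N_n$ I will define the cyclic average $\tilde N:=\tfrac1n\sum_{k=0}^{n-1}C^kNC^{-k}$, which is $C$-invariant (hence circulant), lies in $\mca N_n$ by convexity, and satisfies
\[
\langle P,\tilde N\rangle_F=\langle P,N\rangle_F,\qquad \|\tilde N\|_F\le\|N\|_F,
\]
because $C^{-k}PC^k=P$ and $\|C^kNC^{-k}\|_F=\|N\|_F$. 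Since the maximal angle always exceeds $\pi/2$ in the relevant dimensions (Table~\ref{table:Block_Circulant_PD_SNN_comparison}), optimal pairs satisfy $\langle P,N\rangle_F<0$, and any strict decrease of $\|\tilde N\|_F$ would strictly decrease the cosine, contradicting optimality. Equality in the triangle inequality, together with strict convexity of $\|\cdot\|_F$, then forces $C^kNC^{-k}=N$ for every $k$, i.e.\ $N\in\mca{CN}_n$. Combined with $\mca{CN}_n\subset\mca N_n$, the two problems share the same solution set.

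\medskip

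\textbf{Stage 2 (Fourier reduction).} Using~\eqref{eq:decomposition_circulant_matrix} and the odd-dimension symmetry $\lambda_j=\lambda_{n-j+2}$ for $j\ge 2$, I parametrize $P\in\mca{CP}_n$ by its $m+1$ independent eigenvalues $\pi=(\pi_0,\ldots,\pi_m)^\top\ge 0$ and $N\in\mca{CN}_n$ by the first $m+1$ entries of its first row, $\alpha=(\alpha_0,\ldots,\alpha_m)^\top\ge 0$. With $W:=\diag(1,2,\ldots,2)\in\R^{(m+1)\times(m+1)}$, a direct computation gives
\[
\|P\|_F^2=\pi^\top W\pi,\quad \|N\|_F^2=n\,\alpha^\top W\alpha,\quad \langle P,N\rangle_F=\pi^\top WT\alpha,
\]
where $T_{0,0}=T_{k,0}=1$, $T_{0,j}=2$, and $T_{k,j}=2\cos(\tfrac{2\pi}{n}kj)$ for $k,j\ge 1$. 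The substitution $u:=W^{1/2}\pi$, $v:=W^{1/2}\alpha$ (still nonneg) turns the cosine of the angle into $\tfrac{1}{\sqrt n}\cdot\tfrac{u^\top Kv}{\|u\|\|v\|}$, with
\[
K=W^{1/2}TW^{-1/2}=\begin{pmatrix}1 & \sqrt{2}\,e^\top\\ \sqrt{2}\,e & \hat T\end{pmatrix},\qquad \hat T_{k,j}=2\cos\!\Big(\tfrac{2\pi}{n}kj\Big)=\sqrt n\,M_{k,j}\ \text{ for }k,j\in[m].
\]

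\medskip

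\textbf{Stage 3 (isolating $M$).} Splitting $u=(u_0,\tilde u)$ and $v=(v_0,\tilde v)$, the bilinear form expands as
\[
u^\top Kv\;=\;u_0v_0+\sqrt 2\,u_0\,e^\top\tilde v+\sqrt 2\,v_0\,e^\top\tilde u+\tilde u^\top\hat T\tilde v,
\]
in which the first three summands are nonneg by positivity of every entry involved. When the optimum is negative, $\tilde u^\top\hat T\tilde v$ must itself be negative, and renormalizing $\tilde u,\tilde v$ to unit norm strictly decreases the objective unless $u_0=v_0=0$. Hence the minimum is attained exactly when $\pi_0=\alpha_0=0$, reducing the problem to $\min\{\tilde u^\top\hat T\tilde v:\tilde u,\tilde v\ge 0,\,\|\tilde u\|=\|\tilde v\|=1\}$. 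Since $\hat T=\sqrt n\,M$, the scalar $1/\sqrt n$ cancels and the cosine equals $\tilde u^\top M\tilde v$, giving a bijection between solutions of $\MA(\mca{CP}_n,\mca{CN}_n)$ and those of $\PSV(M)$ via $\alpha_k=v_k/\sqrt 2$ and $\pi_k=u_k/\sqrt 2$ for $k\in[m]$.

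The main obstacle is the bookkeeping in Stage~2: tracking the diagonal weight $W$, the factor $n$ in $\|N\|_F^2$, and the index shift so that the reduced matrix coincides exactly with $M=\tfrac{2}{\sqrt n}[\cos(\tfrac{2\pi}{n}ij)]_{i,j=1}^m$, equivalently the submatrix $[F_n+F_n^H]_{i,j=2:m+1}$ as claimed in the statement.
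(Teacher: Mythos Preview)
Your argument is correct in substance and takes a genuinely different route from the paper. The paper invokes the projection formulas of \cite{Goldberg2013OnTM}: given $N\in\mca{CN}_n$, the farthest $P\in\mca P_n$ is the negative semidefinite part of $N$, which remains circulant; and given $P\in\mca{CP}_n$, the farthest $N\in\mca N_n$ is $P^-$, again circulant. This simultaneously yields both the reduction $\mca N_n\to\mca{CN}_n$ and the vanishing $\lambda_1(P)=0$, $a_1(N)=0$ (your $\pi_0=\alpha_0=0$) in one stroke, after which the computation collapses directly to $\PSV(M)$. Your Stage~1 replaces the first use by a cyclic averaging argument, and your Stage~3 replaces the second by a direct optimality analysis of the block form of $K$. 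The trade-off is clear: your proof is self-contained and does not appeal to an external result, at the price of carrying the $(m{+}1)$-dimensional parametrization through Stage~2 before discarding the zeroth coordinates; the paper's proof is shorter but imports the Goldberg--Shaked-Monderer machinery.

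Two small points to tighten. First, citing Table~\ref{table:Block_Circulant_PD_SNN_comparison} to assert negativity of the optimum is circular, since that table is produced by algorithms that rely on this very lemma; replace it by the one-line observation that $M_{1,m}=\tfrac{2}{\sqrt n}\cos\big(\tfrac{(n-1)\pi}{n}\big)<0$ for all odd $n\ge 3$, so $\PSV(M)<0$ and hence the maximal angle exceeds $\pi/2$. Second, your final bijection formula $\alpha_k=v_k/\sqrt 2$ gives $\|N\|_F^2=n\,\alpha^\top W\alpha=n$, not $1$; the correct normalization is $\alpha_k=\tilde v_k/\sqrt{2n}$ (consistent with the factor of $n$ you yourself flag as the bookkeeping obstacle). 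This does not affect the equality of optimal values, since the cosine is scale-invariant, but the explicit correspondence of unit-norm solution pairs needs the extra $\sqrt n$.
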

\begin{proof}
The basic circulant matrix is 
\[
C\in \f R^{n\times n},\quad C_{i,j} = \begin{cases}
    1, & j-i \equiv 1 \pmod n,\\
    0, & \text{otherwise}, 
\end{cases}\quad 
C =
\begin{pmatrix}
     0 & 1 & & \\
     & \ddots & \ddots & \\
      & &  0 & 1\\
     1  & &   & 0
\end{pmatrix}. 
\]
The algebra of real circulant matrices $\f R[C]$ is the set of all matrices $A$ such that the $i$-th row of $A$ is the right cyclically $(i-1)$-shifted of the first row. It is known that $A$ can be decomposed into two forms: through the entries of its first row, and through its eigenvalues. In other words, given $[a_1, a_2, \dots, a_n]$ the first row of $A$, and denoting by $\lambda_1,\ldots,\lambda_n$ the eigenvalues of $A$, then 
\begin{equation*}
A = \sum_{k=1}^n a_k C^{k-1} = \sum_{j=1}^n \lambda_j f_j f_j^H,
\end{equation*}
where $\lambda_j := \sum_k a_k e^{\textnormal i \frac {2\pi}n(j-1)(k-1) }$, and $f_j:=(1/\sqrt{n})\left[e^{\textnormal i \frac {2\pi}n(i-1)(j-1) }\right]_{i=1:n}$ are the columns of the unitary Fourier matrix $F_n$, and where $z^H$ denotes the conjugate transpose of $z$.

The symmetric real circulant matrices are such that $a_{k} = a_{n-k+2}$ for any $k=2,\dots,n$. Moreover necessarily $\lambda_j\in \f R$ for any $j$, and $\lambda_j=\lambda_{n-j+2}$. Since they have the same number of real parameters, the set of symmetric real circulant matrices are identified either by $a_1, a_2, \dots, a_{\lceil(n-1)/2\rceil}$ or by $\lambda_1, \lambda_2, \dots, \lambda_{\lceil(n-1)/2\rceil}$. 

Call now $\mca {CP}_n\subset\mca P_n$ the (convex) cone of positive semidefinite circulant matrices, and $\mca {CN}_n\cu \mca N_n$ the (convex) cone of nonnegative symmetric circulant matrices. 

Notice now that given $N\in \mca{CN}_n$, the matrix $P = F_n(F_n^H N F_n)^{-}F_n^H\in\mca P_n$ maximizes the angle between $N$ and $\mca P_n$, and it is still circulant, so $P\in \mca {CP}_n$. Since the largest eigenvalue of $N$ is the one associated to $f_1$, we find that $\lambda_1(P) = 0$. 
Moreover, given $P\in \mca{PN}_n$, the matrix $N = P^-\in \mca N_n$ maximizes the angle between $P$ and $\mca N_n$, and it is still circulant, so $N\in \mca {CN}_n$. Since the diagonal of $P$ is nonnegative, we find that the diagonal of $N$ is zero, that is, $a_1(N) = 0$. 

Therefore, the problem $\MA(\mca {CP}_n,\mca {CN}_n)$ has the same solution as $\MA(\mca {P}_n,\mca {CN}_n)$ and $\MA(\mca {CP}_n,\mca {N}_n)$. 
It is in particular solved by a couple $(P,N)\in\mca {CP}_n\times\mca {CN}_n$ such that $\lambda_1(P) = 0$ and $a_1(N) = 0$. 

If $n$ is odd and $n=1+2m$, then we can write any real symmetric circulant $A$ as
\[
A = a_1 I + \sum_{k=1}^{m} a_{k+1} (C^{k} + C^{n-k})
= \lambda_1 \frac 1n{\bf 1}{\bf 1}^\top + 2\sum_{j=2}^{m+1} \lambda_{j} \mc R(f_j f_j^H),
\]
where $\mc R(z)$ denotes the real part of the complex matrix $z$.
Any matrix $P\in \mca {CP}_n$ is uniquely identified by $m+1$ of its real and nonnegative eigenvalues $[\lambda_1, \lambda_2, \dots, \lambda_{m+1}]$. Analogously, any $N\in \mca {CN}_n$ is uniquely identified by $m+1$ of the nonnegative elements on its first row $[a_1, a_2, \dots, a_{m+1}]$. 
 Calling $a:=[a_2,\dots,a_{m+1}]^\top$, $\lambda:=[\lambda_2,\dots,\lambda_{m+1}]^\top$, $x = [x_1,\dots,x_{m}]^\top = a/\sqrt{2n}$,  $y = [y_1,\dots,y_{m}]^\top = \lambda/\sqrt{2}$, then
\begin{align*}
    \min_{\substack{(P,N)\in\mca {CP}_n\times\mca {CN}_n\\ \|P\|_F=\|N\|_F = 1}} \tr(PN)
&=
\min_{\substack{a,\lambda \ge 0,\\ \|a\|^2= 1/2n, \|\lambda\|^2 = 1/2}} 
2\tr\left[\left(
\sum_{k=1}^{m} a_{k+1} (C^{k} + C^{n-k})\right)\left(
\sum_{j=2}^{m+1} \lambda_{j} \mc R(f_j f_j^H)
\right)\right]\\
&=
\min_{\substack{a,\lambda \ge 0,\\ \|a\|^2= 1/2n, \|\lambda\|^2 = 1/2}} 
2\sum_{k=1}^{m}\sum_{j=1}^{m}
 a_{k+1}\lambda_{j+1}
\mc R \left[
 f_{j+1}^H(C^{k} + C^{n-k})f_{j+1} 
\right]\\
&=
\min_{\substack{x,y \ge 0,\\ \|x\|^2= 1, \|y\|^2 = 1}} 
\frac{2}{\sqrt{n}}\sum_{k=1}^{m}\sum_{j=1}^{m}
 x_{k}y_{j}
  \cos \left[  \frac {2\pi}njk 
\right],
\end{align*}
thus, $\MA(\mca {CP}_n,\mca {CN}_n)$ has the same solutions as $\PSV(M)$.

\end{proof}

Table \ref{table:Gur_BFAS_circ_PSD_NNS} reports the optimal values found by Gur and BFAS regarding the problem of finding the largest angle between the circulant PSD cone and the cone of nonnegative symmetric circulant matrices in dimension $n$ for $n=13,15,17,19,21,23$.
When the algorithms terminate in less than $60$ seconds, we report the elapsed times, otherwise we report the best values found in $60$ seconds. 
The results are then compared with the exact maximal angle between circulant matrices in $\mca P_n$ and those in $\mca N_n$, computed by Gur without imposing a time limit up to dimension $n=23$ (for higher dimensions, the computational time exceeds $24$ hours).
The numbers in bold are the optimal angle found, 
whenever they coincide with the exact angles up to a tolerance of $10^{-5}\pi$
.

We observe that BFAS outperforms Gur both in speed and accuracy. In fact, even when its computational time exceeds a minute, it finds the exact solution in all dimensions. Moreover, it can solve all problems in less than $20$ seconds up to dimension $21$.
Gur instead can only solve the problem exactly up to dimension $15$, and the approximations at $60$ seconds is only reliable up to dimension $17$.  
Here Gurobi is way slower than in the previous problem probably because the matrix $M$ is dense.

 Table \ref{table:TL10_circ_PSD_NNS} reports a comparison of the four algorithms with a time limit of $10$ seconds for the same problem in dimensions $n=17,19,21,23,25,27$. Again, E-AO and SRPL\footnote{The algorithm that we are using for this case is the SRPL method for cone of matrices developed in \cite[Section\,6.3]{de2023computing}.} usually are restarted until we reach the $10$ seconds mark, and we perform $100$ of these $10$ seconds run for both heuristic algorithms, reporting the average value. Notice that we know the ground truth only up to dimension $23$
 . For higher dimensions, we report the best known values, obtained by letting Gurobi run for at least $24$ hours on each problem. 
 The parameters used by  SRPL in this case are $\mu_1=0.25$, $\mu_2=0.01$.

E-AO and SRPL are the only methods to converge to the optimum value within the $10$ seconds for all dimensions. Gur is not reliable already from dimension $19$, and BFAS starts to give incorrect results from dimension $25$. 
All algorithms 
reach the optimal value $0.766370 \pi$ in the case $n=23$ in less than $0.02$s except for BFAS that takes more than $10$ seconds. 

\begin{table}[H]
\caption{Numerical comparison of Gurobi and BFAS for different dimensions for the problem of finding the maximal angle between the PSD cone and the nonnegative symmetric cone, both restricted to the subalgebra of circulant matrices. The table reports the optimal objective function values found in the time limit (60 seconds) and the actual elapsed time. We also report the exact value for each problem.
\label{table:Gur_BFAS_circ_PSD_NNS}}
\begin{center}
\begin{tabular}{c|cccccc}
$n$ & $13 $& $15 $& $17 $& $19 $& $21 $& $23 $\\ \hline  \hline 
exact&   $0.762950 \pi$& $0.757765 \pi$& $0.764971 \pi$& $0.768062 \pi$& $0.768769 \pi$& $0.766370 \pi$\\ \hline  \hline 
Gur&   $ \bm{0.762950} \pi$& $ \bm{0.757765} \pi$& $ \bm{0.764971} \pi$& $ 0.767876 \pi$& $ 0.765409 \pi$& $ \bm{0.766370} \pi$\\ 
&  $   0.854 $ s& $  25.061 $ s& $60 ^*$ s& $60 ^*$ s& $60 ^*$ s& $60 ^*$ s\\ \hline
BFAS&   $ \bm{0.762950} \pi$& $ \bm{0.757765}\pi$& $ \bm{0.764971} \pi$& $\bm{ 0.768062} \pi$& $ \bm{0.768768} \pi$& $ \bm{0.766370} \pi$\\ 
&   $   \bm{0.333} $ s& $   \bm{0.356} $ s& $   \bm{1.114} $ s& $   \bm{4.418} $ s& $  \bm{19.953} $ s& $60 ^*$ s
\end{tabular} 
 \end{center}
 \end{table}

\begin{table}[H]
\caption{Numerical comparison of Gur, BFAS, E-AO and SRPL for different dimensions for the problem of finding the maximal angle between the PSD cone and the nonnegative symmetric cone, both restricted to the subalgebra of circulant matrices. 
The table reports the optimal objective function values found in the time limit (10 seconds). The reported numbers for E-AO and SRPL are the best values found after 10 seconds for 100 runs. We also report, when available, the exact value for each problem, and the best known lower bound when the exact value is not available, indicated with an asterisk.
\label{table:TL10_circ_PSD_NNS}}
\begin{center}
\begin{tabular}{c|cccccc}
$n$ &   $17 $& $19 $& $21 $& $23 $& $25 $& $27 $\\ \hline  \hline 
exact&  $0.764971 \pi$& $0.768062 \pi$& $0.768769 \pi$& $0.766370 \pi$& $0.767385 \pi^*$ &  $0.768258 \pi^*$  \\ \hline  \hline 
Gur&  $ \bm{0.764971} \pi$& $ 0.759309 \pi$& $ 0.765409 \pi$& $ \bm{0.766370}\pi$& $\bm{0.767385} \pi$& $ 0.760879 \pi$\\ \hline
BFAS&  $ \bm{0.764971}\pi$& $ \bm{0.768062} \pi$& $ \bm{0.768768} \pi$& $ \bm{0.766370 }\pi$& $ 0.762620 \pi$& $ 0.756841 \pi$\\ \hline
E-AO&  $ \bm{0.764971} \pi$& $ \bm{0.768062} \pi$& $ \bm{0.768768} \pi$& $ \bm{0.766370} \pi$& $ \bm{0.767385}\pi$& $ \bm{0.768258} \pi$\\ \hline
SRPL&  $\bm{ 0.764970} \pi$& $\bm{0.768062} \pi$& $ \bm{0.768768} \pi$& $ \bm{0.766369 }\pi$& $\bm{0.767384} \pi$& $ \bm{0.768257} \pi$
\end{tabular} 
 \end{center}
 \end{table}

\paragraph{Computing $\MA(\mca P_n,\mca N_n)$ with E-AO and SRPL} 

If we consider the harder problem $\MA(\mca P_n,\mca N_n)$, forgetting about the restriction to circulant matrices, we cannot use either Gur or BFAS. 
However, E-AO and SRPL can easily be adapted into solving this problem, since it is always possible to compute exactly the projection of a matrix $A$ on the cones $\mca P_n$ and $\mca N_n$~\cite{higham1988computing}. 

Table~\ref{table:EAO_SRPL_PSD_NNS} reports the optimal values found by the modified E-AO and SRPL regarding the problem of finding the largest angle between the PSD cone and the cone of nonnegative symmetric matrices in dimension $n$ for $n=20,30,40,50,60$. We test the algorithms on $1000$ random starting points and we report the best values found (E-AO$_b$, SRPL$_b$) and the average ones (E-AO$_a$, SRPL$_a$). We also report the average elapsed time over the $1000$ runs, its standard deviation, and the best known values for each problem, taken from \cite{de2023computing}. 
The parameters used by  SRPL in this case are $\mu_1=0.1$, $\mu_2=5$.
\begin{table}[h!]
\caption{Numerical comparison for E-AO and SRPL for different dimensions for the problem of finding the maximal angle between the PSD cone and the nonnegative symmetric cone. The table reports the best and average value found over 10000 random initializations, together with the average elapsed time. We also report the best known value for each dimension.
\label{table:EAO_SRPL_PSD_NNS}}
\begin{center}
\begin{tabular}{c|ccccc}
$n$ \hspace{-0.2cm} & \hspace{-0.2cm}  $20 $\hspace{-0.2cm} & \hspace{-0.2cm}  $30 $\hspace{-0.2cm} & \hspace{-0.2cm}  $40 $\hspace{-0.2cm} & \hspace{-0.2cm}  $50 $\hspace{-0.2cm} & \hspace{-0.2cm}  $60 $\\ \hline  \hline 
best known & \hspace{-0.2cm}  $0.7719 \pi$\hspace{-0.2cm} & \hspace{-0.2cm}  $0.7757 \pi$\hspace{-0.2cm} & \hspace{-0.2cm}  $0.7789 \pi$\hspace{-0.2cm} & \hspace{-0.2cm}  $0.7812 \pi$\hspace{-0.2cm} & \hspace{-0.2cm}  $0.7837 \pi$\\ \hline  \hline 
E-AO$_b$\hspace{-0.2cm} & \hspace{-0.2cm}  $\bm{0.7719} \pi$\hspace{-0.2cm} & \hspace{-0.2cm}  $ \bm{0.7757} \pi$\hspace{-0.2cm} & \hspace{-0.2cm}  $\bm{0.7789} \pi$\hspace{-0.2cm} & \hspace{-0.2cm}  $\bm{0.7813} \pi$\hspace{-0.2cm} & \hspace{-0.2cm}  $\bm{0.7837} \pi$\\ 
E-AO$_a$\hspace{-0.2cm} & \hspace{-0.2cm}  $ 0.7697 \pi$\hspace{-0.2cm} & \hspace{-0.2cm}  $ 0.7741 \pi$\hspace{-0.2cm} & \hspace{-0.2cm}  $ 0.7768 \pi$\hspace{-0.2cm} & \hspace{-0.2cm}  $ 0.7790 \pi$\hspace{-0.2cm} & \hspace{-0.2cm}  $ 0.7805 \pi$\\ 
\hspace{-0.2cm} & \hspace{-0.2cm}  $\bm{   0.022} \pm    0.013$s\hspace{-0.2cm} & \hspace{-0.2cm}  ${   0.111} \pm    0.054$s\hspace{-0.2cm} & \hspace{-0.2cm}  ${   0.701} \pm    0.235$s\hspace{-0.2cm} & \hspace{-0.2cm}  ${   1.263} \pm    0.273$s\hspace{-0.2cm} & \hspace{-0.2cm}  ${   2.852} \pm    0.312$s\\ \hline
SRPL$_b$\hspace{-0.2cm} & \hspace{-0.2cm}  $\bm{0.7719} \pi$\hspace{-0.2cm} & \hspace{-0.2cm}  $\bm{0.7757} \pi$\hspace{-0.2cm} & \hspace{-0.2cm}  $\bm{0.7789} \pi$\hspace{-0.2cm} & \hspace{-0.2cm}  $\bm{0.7812} \pi$\hspace{-0.2cm} & \hspace{-0.2cm}  $\bm{0.7837} \pi$\\ 
SRPL$_a$\hspace{-0.2cm} & \hspace{-0.2cm}  $ 0.7695 \pi$\hspace{-0.2cm} & \hspace{-0.2cm}  $ 0.7739 \pi$\hspace{-0.2cm} & \hspace{-0.2cm}  $ 0.7766 \pi$\hspace{-0.2cm} & \hspace{-0.2cm}  $ 0.7787 \pi$\hspace{-0.2cm} & \hspace{-0.2cm}  $ 0.7802 \pi$\\ 
\hspace{-0.2cm} & \hspace{-0.2cm}  $   0.026 \pm    0.012$s\hspace{-0.2cm} & \hspace{-0.2cm}  $   \bm{0.062} \pm    0.025$s\hspace{-0.2cm} & \hspace{-0.2cm}  $   \bm{0.155} \pm    0.060$s\hspace{-0.2cm} & \hspace{-0.2cm}  $   \bm {0.319} \pm    0.130$s\hspace{-0.2cm} & \hspace{-0.2cm}  $   \bm {0.565} \pm    0.229$s
\end{tabular} 
 \end{center}
 \end{table}

We observe that both algorithms manage to correctly find the best known bound in all cases. E-AO average result is always slightly larger than the SRPL average result, but with a consistently larger computational time per run, and the difference between the average runtimes gets larger with larger dimensions. Already for dimension $60$, SRPL is $5$ times faster than E-AO, and manages to find the same best objective. 
Sometimes, the algorithms may find larger angles than the best known angles, but it has to be attributed to rounding errors.

\section{Conclusion} \label{sec:concl}

In this paper, we studied the concept of singular values of a rectangular matrix, $A$, relative to a pair of closed convex cones, $P$ and $Q$. We also considered two restricted variants: (1)~$A$ is the identity which corresponds to the problem of computing the maximal angle between the cones $P$ and $Q$, and 
(2)~$P$ and $Q$ are the nonnegative orthant which corresponds to the so-called Pareto singular values of $A$. 
We first show that all these problems are NP-hard, while also identifying cases when such problems can be solved in polynomial time. 
Then we proposed 4 algorithms to compute the minimum singular values: two are exact, namely BFAS relying on enumeration and Gur using Gurobi, and two are heuristics, namely E-AO using alternating optimization and SRPL using fractional programming. We then applied these algorithms for various applications. 
Interestingly, there is no clear winner between the four proposed algorithms: each algorithm outperforms the others in at least one of the applications.

\section*{Acknowledgments}  We are grateful to the anonymous reviewers who
carefully read the manuscript, their feedback helped us improve our paper.

\bigskip
\bigskip

\noindent{\bf Acknowledgments} G. Barbarino and N. Gillis acknowledge the support by the European Union (ERC consolidator, eLinoR, no 101085607). D. Sossa acknowledges the support of MATH-AMSUD 23-MATH-09 (AMSUD230018) MORA-DataS project, and the support of the Universidad de O'Higgins through grants MOVI2403 and PNTE2502.

\bigskip
\noindent{\bf Data availability} All data generated or analyzed during this study are included in this article.

\section*{Declarations}
\noindent{\bf Ethical Approval and Consent to participate} All the authors gave the ethical approval and consent to participate in this article.

\bigskip
\noindent{\bf Consent for publication} All the authors gave consent for the publication of identifiable details to be published in the journal and article.

\bigskip
\noindent{\bf Code availability} The codes used in this work are available in\\ \url{https://github.com/giovannibarbarino/coneSV/}.

\bigskip
\noindent{\bf Competing interests} The authors declare no competing interests.

\bibliographystyle{spmpsci}
\bibliography{Article2023}

\end{document}